\newtheorem{definition}{Definition}[section]
\newtheorem{theorem}[definition]{Theorem}
\newtheorem{lemma}[definition]{Lemma}
\newtheorem{proposition}[definition]{Proposition}
\newtheorem{remark}[definition]{Remark}
\newtheorem{condition}[definition]{Condition}
\newtheorem{example}[definition]{Example}
\numberwithin{equation}{section}
\begin{document}
	\title{Random quasi-periodic paths and quasi-periodic measures of stochastic differential equations}

	\author[*]{Chunrong Feng}
	\author [$\dagger$,*] {Baoyou Qu}
	\author[*,$\ddagger$]{Huaizhong Zhao}
	\affil[*]
	{Department of Mathematical Sciences, Durham
		University, DH1 3LE, UK}
	\affil[$\dagger$] {Zhongtai Securities Institute for Financial Studies, Shandong University, Jinan 250100, China}
	\affil[$\ddagger$]{Research Centre for Mathematics and Interdisciplinary Sciences, Shandong University, Qingdao 266237, China}
	\affil[ ]{chunrong.feng@durham.ac.uk, qu@mail.sdu.edu.cn, huaizhong.zhao@durham.ac.uk}
	\date{}
	
\maketitle

\begin{abstract}
	 In this paper, we define random quasi-periodic paths for random dynamical systems and quasi-periodic
	 measures for Markovian semigroups. We give a sufficient condition for the existence and uniqueness of random quasi-periodic 
	paths and quasi-periodic measures for stochastic differential equations and a sufficient condition for the density of the quasi-periodic measure
	to exist and to satisfy the Fokker-Planck equation. We obtain an invariant measure by considering lifted
	flow and semigroup on cylinder and the tightness of the average of lifted quasi-periodic measures. We further prove that the invariant measure 
	is unique, and thus ergodic.
	
	\medskip
	
	\noindent
	{\bf Keywords:} quasi-periodic measures; invariant measures; random dynamical systems; random quasi-periodic paths; Markovian random dynamical system; Markovian semigroup; Fokker-Planck equation.
	
\end{abstract}

\section{Introduction}
Quasi-periodic oscillation of a dynamical system is a motion given by a quasi-periodic function $F$ such that
\begin{equation}
	\label{quasi-periodic function}
	F(t)=f(t,t,\cdots,t),
\end{equation}
  for some continuous function $f(t_1,t_2,\cdots,t_m), \ (t_1,t_2,\cdots,t_m)\in {\mathbb R}^m $ $(m\geq 2)$ which is periodic in $t_1,t_2,\cdots,t_m$ with periods $\tau_1,\tau_2,\cdots,\tau_m$ respectively, where $\tau_1,\tau_2,\cdots,\tau_m$ are strictly positive and their reciprocals are rationally linearly independent i.e. for any nonzero integer-valued vector $k=(k_1,k_2,\cdots,k_m)$,
  $$k_1\frac{1}{\tau_1}+k_2\frac{1}{\tau_2}+\cdots+k_m\frac{1}{\tau_m}\neq 0.$$
  This topic has been subject to many important studies including 
  Kolmogorov-Arnold-Moser (KAM) theory on Hamiltonian systems (\cite{Kolmogorov1954},\cite{Moser1962},\cite{Arnold1963}).
  
  Quasi-periodic motion is a common phenomenon in nature, e.g. arising in describing the movement of planets 
  around the sun. The existence of a quasi-periodic motion 
for the nearly integrable regimes of the three-body problem with
some transversality condition is given by the KAM theory. 
  However many problems in nature are mixture of randomness and quasi-periodic motions. 
  For example the temperature process which is random has one year periodicity due to the revolution of the earth around 
  the sun and one day-night periodicity due to the rotation of the earth. Similarly, the energy demands should have similar nature.
Thus to provide a rigorous mathematical theory is key in modelling random quasi-periodic phenomena in real world. As far as 
we know, such a concept still does not exist and the current paper is the first attempt in this direction.

  The concepts of random periodic paths and periodic measures were introduced recently (\cite{Zhao-Zheng2009},\cite{Feng-Zhao-Zhou2011},\cite{Feng-Zhao2012},\cite{Feng-Wu-Zhao2016},\cite{Feng-Zhao2018}). They are two different indispensable ways to describe random periodicity. The theory has led to progress in the study of bifurcations (\cite{Wang2014}), random attractors (\cite{Bates-Lu-Wang2014}), stochastic resonance (\cite{Cherubini-Lamb-Rasmussen-Sato2017},\cite{FZZ19}), strange attractors (\cite{Huang-Lian2016}) and modelling the El N\^ino phenomenon (\cite{Chekroun-Simonnet-Ghil2011}).
  
  In this paper, we study random quasi-periodicity of random dynamical systems or semi-flows over a metric dynamical system $(\Omega,\mathcal{F},P, (\theta_t)_{t\in\mathbb{R}})$. First we define random quasi-periodic path $\varphi$ of the stochastic-flows $u(t,s): \Omega\times\mathbb{R}^d\rightarrow \mathbb{R}^d, t\geq s$ as a random path satisfying
  $$u(t,s,\varphi(s,\omega),\omega)=\varphi(t,\omega),  t\geq s,s\in \mathbb{R} \text{ a.s.},$$
  and the pull-back random path
  $$t\longmapsto \varphi(t,\theta_{-t}\omega)$$
  is a quasi-periodic function for almost every sample path $\omega\in\Omega$.
  
  For a Markovian semi-flow, let $p(t,s,x,\cdot), t\geq s,$ be its transition probability. Then a measure-valued function $\rho: \mathbb{R}\rightarrow \mathcal{P}(\mathbb{R}^d)$ is called a quasi-periodic measure if $\rho$ is an entrance measure i.e.
  $$\int_{\mathbb{R}^d}P(t,s,x,\Gamma)\rho_s(dx)=\rho_t(\Gamma)\ \ {\rm \ for \ all}\  \Gamma \in {\cal B}({\mathbb R}^d),$$
  and the measure-valued map
  $$s\longmapsto \rho_s$$
  is a quasi-periodic function.
  
  We will give a sufficient condition for the existence and uniqueness of random quasi-periodic path for a stochastic differential equation on $\mathbb{R}^d$
  \begin{equation}
  \label{SDE}
  \begin{cases}
  dX(t)=b(t, X(t))dt+\sigma(t, X(t))dW_t,  \quad t\geq s,\\
  X(s)=\xi,
  \end{cases}
  \end{equation}
  where $b,\sigma$ are quasi-periodic in the time variable t. As this is the first paper in this area, the main 
  purpose here is to establish basic mathematical concepts and useful tools. We do not strike to 
  technical details to try to provide best possible sufficient conditions in the current paper. 
   
  We will prove the law of random quasi-periodic path is a quasi-periodic measure. We further give a sufficient condition for the density of the quasi-periodic measure to exist and to satisfy the Fokker-Planck equation.
  
  For simplicity, we only consider quasi-periodicity with two periods: $\tau_1$ and $\tau_2$ in the current paper. Our results also apply to general cases with any periods $\tau_1,\tau_2,\cdots,\tau_m$ without any extra difficulties.
  
  Solving the reparameterised SDE is a key step in the analysis of finding random quasi-periodic paths. Let $\tilde{b}, \tilde{\sigma}$ be two functions such that
  $$\tilde{b}(t,t,x)=b(t,x), \tilde{\sigma}(t,t,x)=\sigma(t,x)$$
  where $\tilde{b}(t_1,t_2,x), \tilde{\sigma}(t_1,t_2,x)$ are periodic in $t_1,t_2$ with periods $\tau_1$ and $\tau_2$ respectively. Define
  $$\tilde{b}^{r_1,r_2}(t,x)=\tilde{b}(t+r_1,t+r_2,x)$$ $$\tilde{\sigma}^{r_1,r_2}(t,x)=\tilde{\sigma}(t+r_1,t+r_2,x),$$
  then the solution $K^{r_1,r_2}$ of SDE (\ref{SDE}) when $b,\sigma$ are replaced by $\tilde{b}^{r_1,r_2}, \tilde{\sigma}^{r_1,r_2}$, where $r_1,r_2$ are regarded as parameters, satisfies
  $$K^{r,r}(t,s,x,\omega)=u(t+r,s+r,x,\theta_{-r}\omega)$$
  where $u(t,s,\cdot,\omega)$ is the semi-flow generated by (\ref{SDE}). Moreover we can prove under a dissipative condition about the drifts $b$ and $\tilde{b}^{r_1,r_2}$,
  $$\lim_{s\rightarrow -\infty}K^{r_1,r_2}(t,s,x,\omega)={\varphi}^{r_1,r_2}(t,\omega) \text{ exists a.s.}$$
  and
  $$\varphi(r,\omega)={\varphi}^{r,r}(0,\theta_{-r}\omega)$$
  is a random quasi-periodic path of (\ref{SDE}).

Note the reparamerterised SDE enjoys the following property: for all $r_1,r_2,r\in \mathbb{R}$, $t\geq s$,
	\begin{eqnarray}\label{2019aug3}
	K^{r_1,r_2}(t+r,s+r,x,\theta_{-r}\omega)=K^{r_1+r,r_2+r}(t,s,x,\omega), \ P-a.s. \text{ on }\omega.
	\end{eqnarray}
	This is a very useful observation in our analysis, but the original time dependent SDE (\ref{SDE}) does not have
	such a convenient relation.

  Lifting the semi-flow to $\tilde{\mathbb X}=[0, \tau_1) \times [0, \tau_2)\times\mathbb{R}^d$ is key to obtain an invariant measure from the quasi-periodic measure. Define
  $$\tilde{\Phi}(t,\omega)(s_1,s_2,x)=(t+s_1 \mod\tau_1,\ t+s_2 \mod\tau_2,\ K^{s_1,s_2}(t,0,x,\omega))$$
  and
  $$\tilde{Y}(s,\omega)=(s\mod \tau_1,\ s\mod\tau_2,\ \varphi(s,\omega)).$$
  Then $\tilde{Y}$ is a random quasi-periodic path of the cocycle $\tilde{\Phi}$. Moreover we will prove that $\tilde{P}(t,(s_1,s_2,x), \tilde{\Gamma})=P\{\omega: \tilde{\Phi}(t,\omega)(s_1,s_2,x)\in \tilde{\Gamma}\}, \tilde{\Gamma}\in\mathcal{B}(\tilde{\mathbb X})$ is Feller and 
  $$\tilde{\mu}_s(\tilde{\Gamma})=P\{\omega:\tilde{Y}(s,\omega)\in\tilde{\Gamma}\}=[\delta_{s\mod \tau_1}\times \delta_{s\mod \tau_2}\times \rho_s](\tilde{\Gamma})$$
 is a quasi-periodic measure with respect to $\tilde{P}^*$. We will show that 
 $$\{\bar{\tilde{\mu}}_T=\frac{1}{T}\int_{0}^{T}\tilde{\mu}_sds: T\in \mathbb{R}^+\}$$
 is tight and a weak limit $\bar{\tilde{\mu}}$ is an invariant measure with respect to $\tilde{P}^*$. Moreover, we will further show that the invariant measure is unique and ergodic and is given by
 the average $$\frac{1}{\tau_1\tau_2}\int_0^{\tau_1}\int_0^{\tau_2}\delta_{s_1}\times\delta_{s_2}\times\tilde{\rho}_{s_1,s_2}ds_1ds_2.$$
 \section{Random path and entrance measure}
 \subsection{Existence and uniqueness of random path}
 \label{Section of Existence and uniqueness of random path}
  In the stochastic differential equation (\ref{SDE}), $b: \mathbb{R}\times \mathbb{R}^d\rightarrow \mathbb{R}^d, \ \sigma: \mathbb{R}\times \mathbb{R}^d\rightarrow \mathbb{R}^{d\times d}$ are continuous functions, $W_t$ is a two-sided $\mathbb{R}^d$-valued Brownian motion on probability space $(\Omega,\mathcal{F},P)$ with $W_0=0$ 
  and
  $\xi$ is a $\mathbb{R}^d$-valued $\mathcal{F}_{-\infty}^s$-measurable random variable, where $\mathcal{F}_a^b$ is the natural filtration generated by $(W_u-W_v)_{a\leq u,v\leq b}$.
  Now we consider the following assumptions.
  
\begin{condition}
	\label{Dissipative}
	The coefficients $b, \sigma$ in SDE \eqref{SDE} satisfy the following conditions:
	\begin{description}
		\item[(1)] There exist some $\alpha>0$ such that for all $x,y\in \mathbb{R}^d$ and $t\in \mathbb{R}$,
		$$(x-y)\left(b(t,x)-b(t,y)\right)\leq -\alpha(x-y)^2;$$
		\item[(2)] There exists a constant $\beta>0$ such that for all $x,y\in \mathbb{R}^d$ and $t\in \mathbb{R}$,
		$$\|\sigma(t,x)-\sigma(t,y)\|\leq \beta |x-y|;$$
		\item[(3)] There exists $M>0$ such that
		$$\sup_{t\in \mathbb{R}}|b(t,0)|+\sup_{t\in \mathbb{R}}\|\sigma(t,0)\|\leq M;$$
	\end{description}
\end{condition}

\begin{condition}
	\label{k-th growth}
	The drift coefficient $b$ in SDE \eqref{SDE} is $\kappa$-th order growth in $x$ for some $\kappa\geq 1$, i.e. there exist $l>0$ such that for all $x\in \mathbb{R}^d$ and $t\in \mathbb{R}$,
	$$|b(t,x)|\leq l (1+|x|^{\kappa}).$$
\end{condition}
Condition \ref{k-th growth} is needed only for the purpose of perfection. For other results such as (crude) random path and the results in terms of laws including the quasi-periodic measure, the invariant measure and its ergodicity, Condition \ref{k-th growth} is not needed.

 Under Condition \ref{Dissipative}, the solution of (\ref{SDE}) exists, denoted by $X(t,s,\xi)$, and satisfies for $P-a.e.$ $\omega\in\Omega$
 $$X(t,s,\xi(\omega),\omega)=X(t,r,\omega)\circ X(r,s,\xi(\omega), \omega), \text{ for all } s\leq r\leq t.$$
 We call $u:\Delta \times \mathbb{R}^d\times \Omega \rightarrow \mathbb{R}^d$ with $u(t,s,\omega)x=X(t,s,x,\omega)$ a stochastic semi-flow, where $\Delta=\{(t,s): t\geq s, t,s\in\mathbb{R}\}$.
  \begin{definition}
  	A random path of a semi-flow $u:\Delta \times \mathbb{R}^d\times \Omega \rightarrow \mathbb{R}^d$ is a measurable map $\varphi: \mathbb{R}\times \Omega \rightarrow \mathbb{R}^d$ such that for any $t\geq s$,
  	\begin{equation}
  	\label{Random Path}
  	u(t, s, \varphi(s))=\varphi(t), \ P-a.s..
  	\end{equation}
  	We call $\varphi$ a perfect random path if equation \eqref{Random Path} holds for all $t\geq s$, $P-a.s.$ (where the exceptional set $N$ is independent of $t$ and $s$). In addition, if $u$ is generated by an SDE, we say $\varphi$ is a (perfect) random path of this SDE.
\end{definition}
  In the following, we will always use $\|\cdot\|_p$ to denote the norm in the $L^p(\Omega, dP)$ space.
  \begin{theorem}
  	\label{Existence and uniqueness of random path}
  	Assume Condition \ref{Dissipative} and $\alpha >\frac{(p-1)\beta^2}{2}$ for some $p\geq 2$. Then there exists a unique uniformly $L^p$-bounded random path $\varphi$ of SDE (\ref{SDE}), i.e. $\sup_{t\in \mathbb{R}}\|\varphi(t)\|_p<\infty$. If we further assume Condition \ref{k-th growth} and $p\geq (4+2d)\kappa$, this unique random path is perfect.
  \end{theorem}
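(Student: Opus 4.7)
The plan is to construct $\varphi$ by a pullback procedure and exploit the standard $L^p$-contraction forced by dissipativity. Applying It\^o's formula to $|X(t,s,x)-X(t,s,y)|^p$ and using Condition \ref{Dissipative}(1)--(2) yields
\begin{equation*}
	E\,|X(t,s,x)-X(t,s,y)|^p\leq e^{-pc(t-s)}|x-y|^p,\qquad c=\alpha-\tfrac{(p-1)\beta^2}{2}>0.
\end{equation*}
A separate It\^o argument based on Condition \ref{Dissipative}(3) gives $\sup_{s\leq t}E\,|X(t,s,0)|^p<\infty$, and combined with the contraction this yields $\sup_{s'\leq s}\|X(s,s',x)\|_p<\infty$ for each fixed $x$. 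The flow identity $X(t,s',x)=X(t,s,X(s,s',x))$ for $s'\leq s\leq t$ then gives
\begin{equation*}
	\|X(t,s,x)-X(t,s',x)\|_p\leq e^{-c(t-s)}\|x-X(s,s',x)\|_p\longrightarrow 0\ \text{as}\ s\to-\infty,
\end{equation*}
uniformly in $s'\leq s$. Hence $\{X(t,s,x)\}_{s\leq t}$ is Cauchy in $L^p$; I define $\varphi(t)$ as its limit, which by the contraction does not depend on~$x$.

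To verify the random path property, I would pass to the limit $s\to-\infty$ in $X(t,s,x)=u(t,r,X(r,s,x))$: the left side converges in $L^p$ to $\varphi(t)$, while the $L^p$-Lipschitzness of $u(t,r,\cdot)$ (again from the contraction) drives the right side to $u(t,r,\varphi(r))$, so $\varphi(t)=u(t,r,\varphi(r))$ $P$-a.s. Uniformly $L^p$-bounded uniqueness is then immediate from $\|\varphi(t)-\psi(t)\|_p\leq e^{-c(t-s)}\|\varphi(s)-\psi(s)\|_p\to 0$ as $s\to-\infty$.

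The genuinely delicate step is perfection under Condition \ref{k-th growth} and $p\geq(4+2d)\kappa$. The strategy is to produce a version of $u(t,s,x,\omega)$ jointly continuous in $(t,s,x)\in\Delta\times\mathbb{R}^d$ on a single full-measure set, and then read off the random path identity there. To this end, I would derive moment estimates of the form
\begin{equation*}
	E\,|u(t_1,s_1,x_1)-u(t_2,s_2,x_2)|^{p/\kappa}\leq C\bigl(|t_1-t_2|+|s_1-s_2|+|x_1-x_2|\bigr)^{\gamma}
\end{equation*}
for some $\gamma>0$, combining the $\kappa$-th order growth of $b$, the Burkholder--Davis--Gundy inequality and the uniform $L^p$ bounds from the first step; the exponent $p/\kappa$ emerges from controlling $\int b(r,X)\,dr$ in $L^{p/\kappa}$, and the hypothesis $p\geq(4+2d)\kappa$ is tuned so that $p/\kappa$ exceeds the threshold required by Kolmogorov's continuity criterion for a $(d+2)$-parameter field. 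Once a jointly continuous version is in hand, I would define $\varphi(t,\omega)=\lim_{n\to\infty}u(t,-n,0,\omega)$ on the complement of a single null set, verify $u(t,s,\varphi(s),\cdot)=\varphi(t)$ first for rational $(t,s)$ by intersecting countably many null sets, and extend to all $t\geq s$ by continuity. The main obstacle is precisely this last stage: steering the polynomial drift through the Kolmogorov estimate and coordinating the exceptional sets so that one null set serves every $(t,s)$ at once.
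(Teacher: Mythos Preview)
Your proposal is correct and follows essentially the same route as the paper: the pullback construction via the $L^p$-contraction (the paper's Lemmas~\ref{Bounded solution} and~\ref{Exponential decay}) for existence, the contraction again for uniqueness, and Kolmogorov's criterion on the $(d+2)$-parameter field $u(t,s,x)$ followed by a rationals-plus-continuity argument for perfection. The only point you leave implicit is that the perfection step also needs a \emph{continuous} version of $t\mapsto\varphi(t)$, which the paper obtains separately (Lemma~\ref{Continuous property of K}(iv)) via the moment bound $E|\varphi(t)-\varphi(t')|^{p/\kappa}\leq C|t-t'|^{p/(2\kappa)}$ inherited from the estimate on $u$; your pathwise definition $\varphi(t,\omega)=\lim_{n}u(t,-n,0,\omega)$ does not by itself produce a continuous map, so you should either invoke this extra Kolmogorov step or argue locally uniform a.s.\ convergence from the exponential $L^p$ rate plus Borel--Cantelli.
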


  First we give two lemmas before we prove Theorem  \ref{Existence and uniqueness of random path}.

  \begin{lemma}
  	\label{Bounded solution}
  	Assume Condition \ref{Dissipative} and $\alpha >\frac{(p-1)\beta^2}{2}$ for some $p\geq 2$. Let $X_t^{s,\xi}$ be the solution of SDE (\ref{SDE}) with initial condition $(s, \xi)$, where $\xi \in L^p(\Omega)$. Then there exists a constant $C=C(p,\alpha, \beta, M)$ such that for all $t\geq s$,  $\|X_t^{s, \xi}\|_p^p\leq C(1+\|\xi\|_p^p)$.
  \end{lemma}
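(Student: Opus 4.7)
The plan is to apply Itô's formula to $|X_t|^p$, exploit Condition \ref{Dissipative} to get a differential inequality of Gronwall type for $E|X_t|^p$, and solve it.

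More precisely, I first regularize by working with $V_\varepsilon(x)=(\varepsilon+|x|^2)^{p/2}$ (passing to the limit $\varepsilon\downarrow 0$ at the end), which is $C^2$ globally. A direct computation gives
\[
\nabla V_\varepsilon(x)=p(\varepsilon+|x|^2)^{\frac{p-2}{2}}x,\qquad
D^2 V_\varepsilon(x)=p(\varepsilon+|x|^2)^{\frac{p-2}{2}}I+p(p-2)(\varepsilon+|x|^2)^{\frac{p-4}{2}}xx^{T},
\]
and since $p\geq 2$, the second term is positive semi-definite, so using $|\sigma(t,x)^{T}x|^2\leq\|\sigma(t,x)\|^2|x|^2$ yields
\[
\tfrac12\mathrm{tr}\!\left[D^2 V_\varepsilon(x)\,\sigma(t,x)\sigma(t,x)^{T}\right]\leq \tfrac{p(p-1)}{2}(\varepsilon+|x|^2)^{\frac{p-2}{2}}\|\sigma(t,x)\|^2.
\]
Combining this with the drift contribution $p(\varepsilon+|x|^2)^{(p-2)/2}\,x\!\cdot\! b(t,x)$ and using Conditions \ref{Dissipative}(1)--(3) in the form $x\!\cdot\! b(t,x)\le -\alpha|x|^2+M|x|$ and $\|\sigma(t,x)\|^2\le (1+\eta)\beta^2|x|^2+C_\eta M^2$ (for any $\eta>0$), the Itô drift of $V_\varepsilon(X_t)$ is pointwise bounded by
\[
-p\!\left[\alpha-\tfrac{(p-1)(1+\eta)\beta^2}{2}\right]\!|X_t|^p+pM|X_t|^{p-1}+\tfrac{p(p-1)C_\eta M^2}{2}|X_t|^{p-2}+o_\varepsilon(1).
\]

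Since $\alpha>(p-1)\beta^2/2$, I choose $\eta>0$ so small that $\gamma:=p[\alpha-(p-1)(1+\eta)\beta^2/2]>0$; an application of Young's inequality then absorbs the $|X_t|^{p-1}$ and $|X_t|^{p-2}$ terms into $\tfrac{\gamma}{2}|X_t|^p$ plus a constant $K=K(p,\alpha,\beta,M)$. After localizing by a stopping time to kill the martingale part, taking expectation, letting $\varepsilon\downarrow 0$ by monotone/dominated convergence, and removing the stopping time (justified, e.g., by Fatou and a standard a priori moment bound for fixed $s$), I obtain
\[
\tfrac{d}{dt}E|X_t^{s,\xi}|^p\le -\tfrac{\gamma}{2}E|X_t^{s,\xi}|^p+K,\qquad t\ge s,
\]
and Gronwall's inequality gives
\[
E|X_t^{s,\xi}|^p\le e^{-\gamma(t-s)/2}E|\xi|^p+\tfrac{2K}{\gamma}\le C(1+\|\xi\|_p^p),
\]
which is the claim with $C=\max(1,2K/\gamma)$.

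The genuinely delicate step is the choice of $\eta$ and the subsequent Young-inequality bookkeeping that lets the strict inequality $\alpha>(p-1)\beta^2/2$ survive after the additive $M$-contributions from Condition \ref{Dissipative}(3); everything else is routine. The localization and the $\varepsilon\downarrow 0$ passage are standard but need to be done before invoking Gronwall, since otherwise the stochastic integral is only a local martingale and $V_\varepsilon$ is only $C^2$ away from $x=0$ when $p=2$.
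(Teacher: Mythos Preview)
Your argument is correct and follows essentially the same approach as the paper: apply It\^o's formula to (a power of) $|X_t|$, use Condition~\ref{Dissipative} with $y=0$ together with Young's inequality to control the lower-order terms coming from $b(t,0)$ and $\sigma(t,0)$, and exploit the strict gap $\alpha>(p-1)\beta^2/2$ via a small auxiliary parameter ($\eta$ for you, $\epsilon$ in the paper). The only cosmetic difference is that the paper applies It\^o to $e^{\lambda t}|X_t|^p$ with $\lambda>0$ chosen from the gap, which builds the integrating factor into the computation and makes the Gronwall step implicit, whereas you derive the differential inequality for $E|X_t|^p$ first and then invoke Gronwall; your $\varepsilon$-regularisation and stopping-time localisation are extra care that the paper omits (note $|x|^p$ is already $C^2$ for $p\ge 2$, so the regularisation is not strictly needed).
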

  \begin{proof}
  	We only prove this Lemma for $p>2$, since the case $p=2$ can be obtained by a similar way. For any fixed $\lambda$, applying It$\hat{\rm o}$'s formula to $e^{\lambda t}|X_t^{s, \xi}|^p$, we have
  		\begin{eqnarray*}
  		e^{\lambda t}|X_t^{s, \xi}|^p&=&e^{\lambda s}|\xi|^p+\int_{s}^{t}e^{\lambda r}|X_r^{s, \xi}|^{(p-2)}\left(\lambda |X_r^{s, \xi}|^2+pX_r^{s, \xi}\cdot b(r, X_r^{s, \xi})+\frac{p(p-1)}{2}\|\sigma(r, X_r^{s, \xi})\|^2\right)dr\\ 
  		&&+\int_{s}^{t}pe^{\lambda r}|X_r^{s, \xi}|^{(p-2)}X_r^{s, \xi}\sigma(r, X_r^{s, \xi})dW_r.
  	\end{eqnarray*}
  	In Condition \ref{Dissipative}, let $y=0$. Then for arbitrary $\epsilon >0$, by Young inequality and Condition \ref{Dissipative}
  	\begin{eqnarray*}
  		x\cdot b(t, x) &\leq -\alpha|x|^2+x\cdot b(t, 0)\\
  		&\leq -(\alpha-\epsilon)|x|^2+\frac{M^2}{4\epsilon},
  	\end{eqnarray*}
  	and
  	\begin{equation*}
    \begin{split}
    \|\sigma(t, x)\|^2 &\leq (\|\sigma(t,x)-\sigma(t,0)\|+\|\sigma(t,0)\|)^2\\
                            &\leq (\beta|x|+\|\sigma(t,0)\|)^2\\
                            &\leq (\beta^2+\epsilon)|x|^2+(\frac{\beta^2}{\epsilon}+1)M^2.
    \end{split}
    \end{equation*}  
    Since $\alpha>\frac{(p-1)\beta^2}{2}$, we can choose $\epsilon$ small enough such that $\alpha>\frac{(p-1)\beta^2}{2}+2\epsilon$ and $p(\alpha-\frac{(p-1)\beta^2}{2}-2\epsilon)>\epsilon$. Let $\lambda=p(\alpha-\frac{(p-1)\beta^2}{2}-2\epsilon)-\epsilon>0$. Then $\epsilon, \lambda$ are constants depending on $p, \alpha, \beta$. Thus there exists a constant $C(p,\alpha,\beta,M)$ depending on $p,\alpha,\beta,M$ such that
   	\begin{equation*}
      \begin{split}
      e^{\lambda t}|X_t^{s, \xi}|^p\leq 
            &e^{\lambda s}|\xi|^p+ \int_{s}^{t}e^{\lambda r}\left(-\epsilon|X_r^{s, \xi}|^p+C(p,\alpha,\beta,M)|X_r^{s, \xi}|^{(p-2)}\right)dr\\
            &+\int_{s}^{t}pe^{\lambda r}|X_r^{s, \xi}|^{(p-2)}X_r^{s, \xi}\sigma(r, X_r^{s, \xi})dW_r,
     \end{split}
	 \end{equation*}
	 where $C(p,\alpha,\beta,M)=\frac{pM^2}{4\epsilon}+\frac{p(p-1)}{2}\big(\frac{\beta^2}{\epsilon}+1\big)M^2$. Since $p-2>0$, by Young inequality
	 \begin{align*}
		C(p,\alpha,\beta,M)|X_r^{s, \xi}|^{(p-2)}&\leq \epsilon |X_r^{s, \xi}|^p+\frac{2}{p}C(p,\alpha,\beta,M)^{\frac{p}{2}}\big(\frac{\epsilon p}{p-2}\big)^{-\frac{p-2}{2}}\\
		&= \epsilon |X_r^{s, \xi}|^p+C(p,\alpha,\beta,M).
	 \end{align*}
	 Here and in the following, $C(p,\alpha,\beta,M)$ is constant, which may be different from line to line. Then we have
	 \begin{equation*}
		\begin{split}
		e^{\lambda t}|X_t^{s, \xi}|^p &\leq 
		e^{\lambda s}|\xi|^p+ C(p,\alpha,\beta,M)\int_{s}^{t}e^{\lambda r}dr+\int_{s}^{t}pe^{\lambda r}|X_r^{s, \xi}|^{(p-2)}X_r^{s, \xi}\sigma(r, X_r^{s, \xi})dW_r\\
		&\leq e^{\lambda s}|\xi|^p+ C(p,\alpha,\beta,M)e^{\lambda t}+\int_{s}^{t}pe^{\lambda r}|X_r^{s, \xi}|^{(p-2)}X_r^{s, \xi}\sigma(r, X_r^{s, \xi})dW_r.
	   \end{split}
	   \end{equation*}
    Taking expectation of both sides, we have
    \begin{align*}
		e^{\lambda t}\|X_t^{s, \xi}\|_p^p\leq e^{\lambda s}\|\xi\|_p^p+C(p,\alpha,\beta,M)e^{\lambda t}.
	\end{align*}
	Then
	\begin{align*}
		\|X_t^{s, \xi}\|_p^p\leq \|\xi\|_p^p+C(p,\alpha,\beta,M),
	\end{align*}
	which implies the desired result.
  \end{proof}

\begin{lemma}
	\label{Exponential decay}
	Assume Condition \ref{Dissipative} holds. Let $X_t^{s,\xi}$ and $X_t^{s,\eta}$ be two solutions of SDE (\ref{SDE}) with initial values $\xi$ and $\eta$ respectively, where $\xi, \eta \in L^p(\Omega)$ for some $p>1$. Then 
	$$\|X_t^{s,\xi}-X_t^{s,\eta}\|_p\leq e^{-\big(\alpha-\frac{(p-1)\beta^2}{2}\big)(t-s)}\|\xi-\eta\|_p.$$
\end{lemma}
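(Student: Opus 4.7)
Set $Z_t := X_t^{s,\xi} - X_t^{s,\eta}$, so that $Z_t$ solves the linearised SDE
\begin{equation*}
dZ_t = \bigl(b(t, X_t^{s,\xi}) - b(t, X_t^{s,\eta})\bigr)dt + \bigl(\sigma(t, X_t^{s,\xi}) - \sigma(t, X_t^{s,\eta})\bigr)dW_t,
\end{equation*}
with initial value $Z_s = \xi - \eta$. The plan is to apply It\^o's formula to $e^{\lambda t}|Z_t|^p$ with the critical exponent $\lambda := p\bigl(\alpha - \tfrac{(p-1)\beta^2}{2}\bigr) > 0$, verify that the finite-variation part of the resulting process is identically zero as an upper bound, and then read off the decay rate by taking expectations.

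For $p \geq 2$ the map $z \mapsto |z|^p$ is $C^2$, with $\partial_i|z|^p = p|z|^{p-2}z_i$ and $\partial_i\partial_j|z|^p = p|z|^{p-2}\delta_{ij} + p(p-2)|z|^{p-4}z_iz_j$. Writing $\Delta\sigma := \sigma(t,X_t^{s,\xi}) - \sigma(t,X_t^{s,\eta})$, Condition \ref{Dissipative}(1) bounds the drift contribution by $-p\alpha|Z_t|^p$, while Condition \ref{Dissipative}(2) together with the Cauchy--Schwarz estimate $|Z_t^T\Delta\sigma|^2 \leq |Z_t|^2\|\Delta\sigma\|^2$ controls the It\^o correction by $\tfrac{p(p-1)\beta^2}{2}|Z_t|^p$. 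Consequently, the finite-variation part of $e^{\lambda t}|Z_t|^p$ reads $e^{\lambda t}\bigl[\lambda - p\alpha + \tfrac{p(p-1)\beta^2}{2}\bigr]|Z_t|^p = 0$ by the choice of $\lambda$. After localising with $\tau_n := \inf\{t \geq s : |Z_t| \geq n\}$ to make the It\^o integral a true martingale, taking expectations yields $\mathbb{E}[e^{\lambda(t\wedge\tau_n)}|Z_{t\wedge\tau_n}|^p] \leq e^{\lambda s}\|\xi-\eta\|_p^p$. Fatou on the left and the uniform $L^p$ bound of Lemma \ref{Bounded solution} applied to both $X^{s,\xi}$ and $X^{s,\eta}$ permit the passage $n \to \infty$, delivering $\|Z_t\|_p \leq e^{-(\alpha - (p-1)\beta^2/2)(t-s)}\|\xi-\eta\|_p$.

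The main technical obstacle is the case $1 < p < 2$, where $|z|^p$ fails to be $C^2$ at the origin so It\^o's formula does not apply directly. I would handle this by the standard regularisation $f_\varepsilon(z) := (|z|^2 + \varepsilon)^{p/2}$, which is smooth. Since $p - 2 < 0$, the second-order correction $p(p-2)(|Z_t|^2+\varepsilon)^{p/2-2}Z_t^T\Delta\sigma\Delta\sigma^T Z_t$ is nonpositive and may be discarded; the remaining piece is bounded by $\tfrac{p\beta^2}{2}(|Z_t|^2+\varepsilon)^{p/2}$. The dissipativity term becomes $-p\alpha(|Z_t|^2+\varepsilon)^{p/2-1}|Z_t|^2 = -p\alpha(|Z_t|^2+\varepsilon)^{p/2} + p\alpha\varepsilon(|Z_t|^2+\varepsilon)^{p/2-1}$, where the error is bounded by $p\alpha\varepsilon^{p/2}$ and vanishes as $\varepsilon \downarrow 0$. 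Sending $\varepsilon \to 0$ after the same localisation argument recovers the identical exponential decay estimate for all $p > 1$.
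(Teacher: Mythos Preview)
For $p \geq 2$ your argument is exactly the paper's: apply It\^o to $e^{\lambda t}|Z_t|^p$ with $\lambda = p\bigl(\alpha - \tfrac{(p-1)\beta^2}{2}\bigr)$, use the dissipativity and Lipschitz bounds, and take expectations. You are in fact more careful than the paper, which neither writes out the Hessian of $|z|^p$ nor localises before taking expectations.

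There is, however, a genuine slip in your treatment of $1 < p < 2$. After discarding the nonpositive term $p(p-2)(|Z_t|^2+\varepsilon)^{p/2-2}|Z_t^T\Delta\sigma|^2$, the surviving It\^o correction is bounded only by $\tfrac{p\beta^2}{2}(|Z_t|^2+\varepsilon)^{p/2}$, \emph{not} by $\tfrac{p(p-1)\beta^2}{2}(|Z_t|^2+\varepsilon)^{p/2}$. With your choice of $\lambda$ the finite-variation part then carries the coefficient
\[
\lambda - p\alpha + \tfrac{p\beta^2}{2} \;=\; \tfrac{p\beta^2}{2}(2-p) \;>\; 0,
\]
so you cannot conclude the stated rate $\alpha - \tfrac{(p-1)\beta^2}{2}$; the regularisation argument as written yields only the weaker rate $\alpha - \tfrac{\beta^2}{2}$. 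In dimension $d \geq 2$ the sharper constant does not follow from this approach without additional structure on $\sigma$, because the discarded term cannot be traded back against the first one. The paper's proof does not address $1 < p < 2$ rigorously either (it simply writes the It\^o expansion with the $\tfrac{p(p-1)}{2}\|\hat\sigma\|^2$ coefficient as if it were valid), and every application of the lemma in the paper takes the exponent $\geq 2$, so the defect is cosmetic for the paper's purposes; but your final sentence claiming the ``identical exponential decay estimate for all $p>1$'' overstates what the computation delivers.
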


\begin{proof}
	Note
	$$X_t^{s,\xi}-X_t^{s,\eta}=\xi-\eta+\int_{s}^{t}\left(b(r,X_r^{s,\xi})-b(r,X_r^{s,\eta})\right)dr +\int_{s}^{t}\left(\sigma(r,X_r^{s,\xi})-\sigma(r,X_r^{s,\eta})\right)dW_r.$$
	Let $\hat{X}_t:=X_t^{s,\xi}-X_t^{s,\eta}, \ \hat{b}_t=b(t,X_t^{s,\xi})-b(t,X_t^{s,\eta})$ and $\hat{\sigma}_t:=\sigma(t,X_t^{s,\xi})-\sigma(t,X_t^{s,\eta})$.
	For any fixed $\lambda$, applying It$\hat{\rm o}$'s formula to $e^{\lambda t}|\hat{X}_t|^p$, we have
	\begin{equation*}
		\begin{split}
		e^{\lambda t}|\hat{X}_t|^p
		=&e^{\lambda s}|\xi-\eta|^p
		+\int_{s}^{t}e^{\lambda r}|\hat{X}_r|^{(p-2)}\big(\lambda |\hat{X}_r|^2+p\hat{X}_r\cdot \hat{b}_r+\frac{p(p-1)}{2}\|\hat{\sigma}\|^2\big)dr\\
		&+\int_{s}^{t}pe^{\lambda r}|\hat{X}_r|^{(p-2)}\hat{X}_r\hat{\sigma}_rdW_r\\
		\leq &e^{\lambda s}|\xi-\eta|^p+\int_{s}^{t}e^{\lambda r}|\hat{X}_r|^p\bigg(\lambda-p\alpha+\frac{p(p-1)\beta^2}{2}\bigg)dr
		+\int_{s}^{t}pe^{\lambda r}|\hat{X}_r|^{(p-2)}\hat{X}_r\hat{\sigma}_rdW_r.
		\end{split}
	\end{equation*}
	Let $\lambda=p\big(\alpha-\frac{(p-1)\beta^2}{2}\big)$. Taking expectation on both sides, we have
	$$e^{\lambda t}\|X_t^{s,\xi}-X_t^{s,\eta}\|_p^p\leq e^{\lambda s}\|\xi-\eta\|_p^p.$$
	Thus the lemma follows.
\end{proof}

Now we give the proof of Theorem \ref{Existence and uniqueness of random path}

\begin{proof}[Proof of Theorem \ref{Existence and uniqueness of random path}]
	Existence: Let $s_1<s_2<t$. Then for any fixed $\xi \in L^p(\Omega)$,
	$$X_t^{s_1,\xi}=X_t^{s_2, X_{s_2}^{s_1, \xi}}.$$
	Now consider $\|X_t^{s_1, \xi}-X_t^{s_2, \xi}\|_p$. Applying Lemma \ref{Bounded solution} and Lemma \ref{Exponential decay}, we have
	\begin{equation*}
		\begin{split}
		\|X_t^{s_1, \xi}-X_t^{s_2, \xi}\|_p=&\|X_t^{s_2, X_{s_2}^{s_1, \xi}}-X_t^{s_2, \xi}\|_p\\
		\leq &e^{-\big(\alpha-\frac{(p-1)\beta^2}{2}\big)(t-s_2)}\|X_{s_2}^{s_1, \xi}-\xi\|_p\\
		\leq &e^{-\big(\alpha-\frac{(p-1)\beta^2}{2}\big)(t-s_2)}\left(\|X_{s_2}^{s_1, \xi}\|_p+\|\xi\|_p\right)\\
		\leq &C(p, \alpha, \beta, M, \|\xi\|_p)e^{-\big(\alpha-\frac{(p-1)\beta^2}{2}\big)(t-s_2)}.
		\end{split}
	\end{equation*}
	Thus there exists a $L^p$-limit of $\left(X_t^{s,\xi}\right)_{s\leq t}$ as $s\rightarrow -\infty$. By Lemma \ref{Exponential decay}, we know that this limit is independent of $\xi$. Define
	\begin{equation}
		\label{1220-0}
		\varphi(t):=L^p-\lim_{s\rightarrow -\infty}X_t^{s, \xi},
	\end{equation}
	then
	$$\|\varphi(t)\|_p\leq \limsup_{s\rightarrow -\infty}\|X_t^{s, \xi}\|_p\leq C(p, \alpha, \beta, M, \|\xi\|_p)\leq C(p,\alpha,\beta,M).$$
	Next we will prove that $\varphi$ is a random path of SDE (\ref{SDE}). For any $t\geq s\geq r$, we have
	$$u(t, s, X_s^{r,\xi})=X_t^{r,\xi}, \ P-a.s..$$
	By Lemma \ref{Exponential decay}, we know that 
	$$\|u(t, s, X_s^{r,\xi})-u(t, s, \varphi(s))\|_p\leq e^{-\big(\alpha-\frac{(p-1)\beta^2}{2}\big)(t-s)}\|X_s^{r,\xi}-\varphi(s)\|_p.$$
	It follows that for all $t\geq s$
	\begin{equation}
		\label{1220-*}
		L^p-\lim_{r\rightarrow -\infty}u(t, s, X_s^{r,\xi})=u(t, s, \varphi(s))=\varphi(t)=L^p-\lim_{r\rightarrow -\infty}X_t^{r,\xi}, \ P-a.s..
	\end{equation}
	Hence $\varphi$ is a random path of SDE \eqref{SDE}.

	Now under further Condition \ref{k-th growth} and $p\geq (4+2d)\kappa$, by $(i)$ and $(iv)$ of Lemma \ref{Continuous property of K}, we know that the solution $u(t,s,x)$ of SDE \eqref{SDE} and $\varphi(t)$ are continuous with respect to $(t,s,x)$ and $t$ $P-a.s.$, respectively. Lemma \ref{Continuous property of K} contains some key estimates needed for perfection. But in order not to interrupt the main flow of the proof of this theorem, we postpone this Lemma and its proof to the end of Section \ref{Section of Quasi-periodic path}.
	Denote 
	$$N_{s,t}:=\{\omega | u(t, s, \varphi(s, \omega),\omega)\neq\varphi(t, \omega)\}$$
	$$N_{u}:=\{\omega| u: (t,s,x)\mapsto u(t,s,x,\omega) \text{ is not continuous}\},$$
	$$N_{\varphi}:=\{\omega| \varphi: t\mapsto \varphi(t,\omega) \text{ is not continuous}\},$$
	and
	$$N=\bigcup_{t, s\in Q, t\geq s}N_{s,t}\bigcup N_{u}\bigcup N_{\varphi}$$ 
	where $Q$ is the set of all rational numbers. Since equation \eqref{1220-*} holds, we know that $P(N)=0$. Fix $\omega\in N^c$, for any $t\geq s$, we choose $\{t_n,s_n\}_{n\geq 1}$ such that $t_n\geq s_n, t_n, s_n\in Q$ and $t_n\to t, s_n\to s$, by continuity of $u(\cdot,\cdot,\cdot,\omega), \varphi(\cdot,\omega)$, we have 
	$$u(t, s, \varphi(s, \omega),\omega)=\lim_{n\rightarrow \infty}u(t_n, s_n, \varphi(s_n, \omega),\omega)=\lim_{n\rightarrow \infty}\varphi(t_n,\omega)=\varphi(t, \omega).$$
	Thus $\varphi$ is a uniformly $L^p$-bounded perfect random path of SDE (\ref{SDE}).
	
	Uniqueness: If there are two uniformly $L^p$-bounded random paths $\varphi_1, \varphi_2$ of SDE (\ref{SDE}), by Lemma \ref{Exponential decay}, we have for any $t\in \mathbb{R}$
	\begin{equation*}
	\begin{split}
	\|\varphi_1(t)-\varphi_2(t)\|_p\leq& e^{-\big(\alpha-\frac{(p-1)\beta^2}{2}\big)(t-s)}\|\varphi_1(s)-\varphi_2(s)\|_p\\
	\leq &e^{-\big(\alpha-\frac{(p-1)\beta^2}{2}\big)(t-s)}(\sup_{r\in \mathbb{R}}\|\varphi_1(r)\|_p+\sup_{r\in \mathbb{R}}\|\varphi_2(r)\|_p) \rightarrow 0 \text{ as } s\rightarrow -\infty.
	\end{split}
	\end{equation*}
	Then $\varphi_1(t)=\varphi_2(t),$ $P-a.s.$. 
	
	If there are two uniformly $L^p$-bounded perfect random paths $\varphi_1, \varphi_2$ of SDE (\ref{SDE}), denote 
	$$N_{\varphi_1}:=\{\omega| u(t,s,\varphi_1(s,\omega),\omega)\neq \varphi_1(s,\omega), \text{ for some } t\geq s\in \mathbb{R}\},$$
	and
	$$N_{\varphi_2}:=\{\omega| u(t,s,\varphi_2(s,\omega),\omega)\neq \varphi_2(s,\omega), \text{ for some } t\geq s\in \mathbb{R}\}.$$
	Since $\varphi_1, \varphi_2$ are random paths of SDE \eqref{SDE}, by Definition \ref{Random Path}, we have $P(N_{\varphi_1})=P(N_{\varphi_2})=0$. Let $N_t=\{\omega| \varphi_1(t,\omega)\neq \varphi_2(t,\omega)\}$ and 
	$$N^0=\bigcup_{n\geq 1}N_{-n}\bigcup N_{\varphi_1}\bigcup N_{\varphi_2},$$ 
	we obtain $P(N^0)=0$. Similarly fix $\omega\in (N^0)^c$, then for any $t\in \mathbb{R}$, choose $n\geq t$, we have 
	$$\varphi_1(t,\omega)=u(t,-n,\varphi_1(-n, \omega), \omega)=u(t,-n,\varphi_2(-n, \omega), \omega)=\varphi_2(t, \omega),$$
	which means $P-a.e.\ \omega\in \Omega$, 
	$$\varphi_1(t,\omega)=\varphi_2(t,w), \text{ for all } t\in \mathbb{R}.$$
\end{proof}

\subsection{Existence and uniqueness of entrance measure}
  For a semi-flow $u: \triangle\times \mathbb{R}^d\times \Omega\rightarrow \mathbb{R}^d$ with $u(t,s,x,\omega)=X_t^{s,x}(\omega)$, we define the transition $P: \triangle\times \mathbb{R}^d\times \mathcal{B}(\mathbb{R}^d)\rightarrow \mathbb{R}^+$ by $P(t,s,x,\Gamma)=P(X_t^{s,x}\in \Gamma)$ for all $t\geq s$, $x\in\mathbb{R}^d$ and $\Gamma\in\mathcal{B}(\mathbb{R}^d)$. We further define $P^*(t,s): \mathcal{P}(\mathbb{R}^d)\rightarrow\mathcal{P}(\mathbb{R}^d)$ by 
  \begin{equation}
  \label{Define measure transition P^*}
  	P^*(t,s)\mu(\Gamma)=\int_{\mathbb{R}^d}P(t,s,x,\Gamma)\mu(dx), \text{ for all } \mu\in\mathcal{P}(\mathbb{R}^d),\Gamma\in\mathcal{B}(\mathbb{R}^d).
  \end{equation}
  Here 
  $$\mathcal{P}(\mathbb{R}^d):=\{\text{all probability measures on } (\mathbb{R}^d, \mathcal{B}(\mathbb{R}^d))\}.$$

  \begin{definition}
  	We say a measure-valued map $\mu: \mathbb{R}\rightarrow \mathcal{P}(\mathbb{R}^d)$ is an entrance measure of SDE(\ref{SDE}) if $P^*(t,s)\mu_s=\mu_t$ for all $t\geq s, s\in \mathbb{R}$.
  \end{definition}
  Set
  $$\mathcal{M}^p:=\{\mu:\mathbb{R}\rightarrow \mathcal{P}(\mathbb{R}^d)| \sup_{t\in \mathbb{R}}\int_{\mathbb{R}^d}|x|^p\mu_t(dx)<\infty\}.$$
  \begin{theorem}
  	\label{Existence and uniqueness of entrance measure}
  	Assume Condition \ref{Dissipative} and $\alpha >\frac{(p-1)\beta^2}{2}$ for some $p\geq 2$. Then there exists a unique entrance measure of SDE (\ref{SDE}) in $\mathcal{M}^p$.
  \end{theorem}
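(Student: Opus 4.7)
The plan is to construct the entrance measure as the law of the random path $\varphi$ already produced by Theorem \ref{Existence and uniqueness of random path}, and to extract uniqueness from the exponential contraction established in Lemma \ref{Exponential decay}.

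For existence, I would set $\mu_t := \mathcal{L}(\varphi(t))$. Because the pullback definition \eqref{1220-0} realizes $\varphi(t)$ as an $L^p$-limit of solutions driven by Brownian increments with initial time tending to $-\infty$, the random variable $\varphi(s)$ is $\mathcal{F}_{-\infty}^s$-measurable, hence independent of the future increments $(W_u-W_s)_{u\geq s}$. Combined with the random-path identity $u(t,s,\varphi(s))=\varphi(t)$ a.s.\ and the fact that $u(t,s,x,\cdot)$ is $\mathcal{F}_s^t$-measurable, this yields
\begin{equation*}
\mu_t(\Gamma)=P\bigl(u(t,s,\varphi(s))\in\Gamma\bigr)=\int_{\mathbb{R}^d}P(t,s,x,\Gamma)\mu_s(dx)=P^*(t,s)\mu_s(\Gamma),
\end{equation*}
so $\mu$ is an entrance measure. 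Uniform $L^p$-boundedness of $\varphi$ from Theorem \ref{Existence and uniqueness of random path} immediately gives $\sup_t\int_{\mathbb{R}^d}|x|^p\mu_t(dx)<\infty$, placing $\mu$ in $\mathcal{M}^p$.

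For uniqueness, suppose $\nu\in\mathcal{M}^p$ is any entrance measure. The strategy is a coupling argument. For each $s$, on a possibly enlarged probability space, I realize random variables $\xi_s,\eta_s$ independent of $(W_u-W_s)_{u\geq s}$ with $\mathcal{L}(\xi_s)=\nu_s$ and $\mathcal{L}(\eta_s)=\mu_s$. The entrance property of both measures then gives $\mathcal{L}(X_t^{s,\xi_s})=\nu_t$ and $\mathcal{L}(X_t^{s,\eta_s})=\mu_t$, while Lemma \ref{Exponential decay} applied to the pair $(\xi_s,\eta_s)$ as initial data yields
\begin{equation*}
W_p(\nu_t,\mu_t)\leq\|X_t^{s,\xi_s}-X_t^{s,\eta_s}\|_p\leq e^{-(\alpha-(p-1)\beta^2/2)(t-s)}\bigl(\|\xi_s\|_p+\|\eta_s\|_p\bigr),
\end{equation*}
where $W_p$ denotes the $L^p$-Wasserstein distance. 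Since $\nu,\mu\in\mathcal{M}^p$, the factor $\|\xi_s\|_p+\|\eta_s\|_p$ is bounded uniformly in $s$, while the exponential factor vanishes as $s\to -\infty$. Hence $W_p(\nu_t,\mu_t)=0$, i.e.\ $\nu_t=\mu_t$ for every $t\in\mathbb{R}$.

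The main technical subtlety will be measurability in the existence step: one must be explicit that $\varphi(s)$ is $\mathcal{F}_{-\infty}^s$-measurable so that the transition kernel $P(t,s,\cdot,\cdot)$ correctly describes the conditional law of $u(t,s,\varphi(s))$ given $\mathcal{F}_{-\infty}^s$; this follows directly from \eqref{1220-0} but deserves a line. For uniqueness, the only other point to state carefully is the joint realization of $\xi_s,\eta_s$ with the prescribed marginals and independent of the future Brownian increments, which is a standard enlargement construction. Once these are in place, the exponential contraction does all the work, and no fixed-point or compactness argument is needed.
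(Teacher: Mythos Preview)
Your existence argument coincides with the paper's: both take $\mu_t=\mathcal{L}(\varphi(t))$, invoke the $\mathcal{F}_{-\infty}^s$-measurability of $\varphi(s)$ to justify $P^*(t,s)\mu_s=\mu_t$, and read off membership in $\mathcal{M}^p$ from the uniform $L^p$-bound on $\varphi$.

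For uniqueness you take a genuinely different route. The paper does not use coupling or Wasserstein distance; instead it proves an auxiliary lemma (Lemma~\ref{Two measure are the same}) stating that two probability measures agreeing on open sets must coincide, and then shows $\rho_t(\mathcal{O})\leq\mu_t(\mathcal{O})$ for every open $\mathcal{O}$ by a direct pointwise estimate: writing the difference as $\int(P(\varphi(t)\in\mathcal{O})-P(X_t^{s,x}\in\mathcal{O}))\mu_s(dx)$, shrinking $\mathcal{O}$ to $\mathcal{O}_\delta$, applying Chebyshev with the bound $\mathbb{E}|X_t^{s,x}-\varphi(t)|^p\leq C(1+|x|^p)e^{-p(\alpha-(p-1)\beta^2/2)(t-s)}$, and sending $s\to-\infty$, $\delta\downarrow 0$. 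Your coupling argument is more streamlined: it feeds Lemma~\ref{Exponential decay} directly into a Wasserstein bound and avoids the open-set lemma entirely. The paper's version is more elementary in that it stays within measure-theoretic first principles and requires no enlargement of the probability space or knowledge of the Wasserstein metric, but at the cost of the extra lemma and a slightly longer computation. Both are correct; your approach is the standard modern one and the paper's is self-contained.
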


  To prove Theorem \ref{Existence and uniqueness of entrance measure}, we need the following lemma.
  
  \begin{lemma}
  	\label{Two measure are the same}
  	Assume $\mu_1$ and $\mu_2$ are two probability measures on $(\mathbb{R}^d, \mathcal{B}(\mathbb{R}^d))$, and for any open set $\mathcal{O}$ we have $\mu_1(\mathcal{O})\leq \mu_2(\mathcal{O})$. Then $\mu_1=\mu_2$.
  \end{lemma}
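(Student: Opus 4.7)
The plan is to leverage the outer regularity of finite Borel measures on $\mathbb{R}^d$ (a Polish, and in fact locally compact Hausdorff, space) together with a complementation trick that turns the one-sided inequality on open sets into a two-sided inequality on all Borel sets.

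First I would recall (or quickly verify) that any finite Borel measure $\mu$ on $\mathbb{R}^d$ is outer regular, i.e.\ for every $B\in\mathcal{B}(\mathbb{R}^d)$,
\[
\mu(B)=\inf\{\mu(\mathcal{O}):\mathcal{O}\supset B,\ \mathcal{O}\ \text{open}\}.
\]
This is standard and follows, for instance, by showing that the class of Borel sets for which this equality holds contains the open sets and forms a $\sigma$-algebra (or by invoking that every finite Borel measure on a Polish space is Radon).

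Next, given any $B\in\mathcal{B}(\mathbb{R}^d)$ and any open set $\mathcal{O}\supset B$, the hypothesis gives $\mu_1(\mathcal{O})\le\mu_2(\mathcal{O})$. Taking the infimum over all such $\mathcal{O}$ and using outer regularity for both $\mu_1$ and $\mu_2$, I conclude
\[
\mu_1(B)=\inf_{\mathcal{O}\supset B}\mu_1(\mathcal{O})\ \le\ \inf_{\mathcal{O}\supset B}\mu_2(\mathcal{O})=\mu_2(B).
\]
Thus the inequality $\mu_1\le\mu_2$ extends from open sets to all Borel sets.

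The final step is the complementation trick. Applying the previous step to $B^c\in\mathcal{B}(\mathbb{R}^d)$ gives $\mu_1(B^c)\le\mu_2(B^c)$. Since both $\mu_1$ and $\mu_2$ are probability measures, this is equivalent to $1-\mu_1(B)\le 1-\mu_2(B)$, i.e.\ $\mu_1(B)\ge\mu_2(B)$. Combined with the reverse inequality from the previous paragraph, we obtain $\mu_1(B)=\mu_2(B)$ for every Borel set $B$, as desired. There is no real obstacle here; the only point requiring care is citing or quickly proving outer regularity, which is routine on $\mathbb{R}^d$.
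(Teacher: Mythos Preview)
Your argument is correct. Both proofs hinge on approximation by open sets together with the complementation trick $1-\mu_1(B)\le 1-\mu_2(B)$, but the organization differs. The paper first establishes \emph{equality} on open sets by approximating the closed complement $\mathcal{O}^c$ from outside by the explicit open $\delta$-neighborhoods $\mathcal{O}^c_\delta=\{x:\operatorname{dist}(x,\mathcal{O}^c)<\delta\}$, then extends equality to all of $\mathcal{B}(\mathbb{R}^d)$ via the $\pi$-$\lambda$ theorem (open sets form a $\pi$-system generating the Borel $\sigma$-algebra). You instead invoke full outer regularity to push the one-sided \emph{inequality} $\mu_1\le\mu_2$ directly to every Borel set, and only then apply complementation. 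Your route is slicker once outer regularity is granted, while the paper's route is more self-contained, using only the elementary fact that a closed set is the decreasing intersection of its open neighborhoods together with continuity of measures.
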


  \begin{proof}
  	Let $\mathcal{C}:=\{\text{all open sets on }\mathbb{R}^d\}$. First we know we know that $\mu_1\leq \mu_2$ on $\mathcal{C}$.
  We now prove the opposite inequality.  For any given $\mathcal{O}\in \mathcal{C}$, $\mathcal{O}^c=\mathbb{R}^d\setminus \mathcal{O}$ is a closed set. Define 
  	$$\mathcal{O}^c_{\delta}:=\{x: dist(x, \mathcal{O}^c)<\delta\},$$
  	where $dist(x, \mathcal{O}^c)=\inf_{y\in \mathcal{O}^c}|x-y|$. Then we know that $\mathcal{O}^c_{\delta}$ is open set and $\mathcal{O}^c_{\delta}\downarrow \mathcal{O}^c$ as $\delta\downarrow 0$. Further more
  	$$\mu_1(\mathcal{O}^c)=\lim_{\delta\downarrow 0}\mu_1(\mathcal{O}^c_{\delta})\leq \lim_{\delta\downarrow 0}\mu_2(\mathcal{O}^c_{\delta})=\mu_2(\mathcal{O}^c).$$
  	Since $\mu_1$ and $\mu_2$ are probability measures, we have
  	$$1-\mu_1(\mathcal{O})\leq 1-\mu_2(\mathcal{O}),$$
  	which implies $\mu_1(\mathcal{O})\geq \mu_2(\mathcal{O})$. Hence $\mu_1\geq \mu_2$ on $\mathcal{C}$. This leads to $\mu_1=\mu_2$ on $\mathcal{C}$.
  	
  	Since $\mathcal{C}$ is a $\pi$-system and $\sigma(\mathcal{C})=\mathcal{B}(\mathbb{R}^d)$, thus $\mu_1=\mu_2$ on $\mathcal{B}(\mathbb{R}^d)$.
  \end{proof}
 
   Now we give the proof of Theorem \ref{Existence and uniqueness of entrance measure}.
   
  \begin{proof}[Proof of Theorem \ref{Existence and uniqueness of entrance measure}]
  	Existence: Applying Theorem \ref{Existence and uniqueness of random path}, we know that there exists a uniformly $L^p$-bounded random path $\varphi$ of SDE \eqref{SDE}. Let $\rho_t=\mathcal{L}(\varphi(t))$ be the law of $\varphi(t)$. Then for any $\Gamma\in\mathcal{B}(\mathbb{R}^d)$, we have
  	\begin{equation}
  	\label{Law of random path adapted}
  	\begin{split}
  	P^*(t,s)\rho_s(\Gamma)&=\int_{\mathbb{R}^d}P(t,s,x,\Gamma)\rho_s(dx)\\
  	&=\int_{\mathbb{R}^d}P(X_t^{s,x}\in\Gamma)P(\varphi(s)\in dx)\\
  	&=P(X_t^{s,\varphi(s)}\in\Gamma)\\
  	&=P(\varphi(t)\in\Gamma)\\
  	&=\rho_t(\Gamma).
  	\end{split}
  	\end{equation}
   Thus $\rho$ is an entrance measure of SDE (\ref{SDE}). And since $\varphi$ is uniformly $L^p$-bounded, then
   $$\sup_{t\in \mathbb{R}}\int_{\mathbb{R}^d}|x|^p\rho_t(dx)=\sup_{t\in \mathbb{R}}\mathbb{E}[|\varphi(t)|^p]<\infty,$$
   which means $\rho\in \mathcal{M}^p$.
  	
   Uniqueness: We aim to prove that for any entrance measure $\mu$ of SDE (\ref{SDE}) in $\mathcal{M}^p$, $\mu_t=\rho_t$ for all $t\in \mathbb{R}$.	By Lemma \ref{Two measure are the same}, we just need to prove $\rho_t(\mathcal{O})\leq \mu_t(\mathcal{O})$ for any open set $\mathcal{O}\subset \mathbb{R}^d$. Since for any $s<t$, we have
   \begin{equation*}
   \begin{split}
   \rho_t(\mathcal{O})-\mu_t(\mathcal{O})
   &=\rho_t(\mathcal{O})-\int_{\mathbb{R}^d}P(t,s,x,\mathcal{O})\mu_s(dx)\\
   &=\int_{\mathbb{R}^d}\left(\rho_t(\mathcal{O})-P(X_t^{s,x}\in\mathcal{O})\right)\mu_s(dx)\\
   &=\int_{\mathbb{R}^d}\left(P(\varphi(t)\in \mathcal{O})-P(X_t^{s,x}\in\mathcal{O})\right)\mu_s(dx).
   \end{split}
   \end{equation*}
   Define 
   $$\mathcal{O}_{\delta}:=\{x: dist(x, \mathcal{O}^c)>\delta\}.$$
   Then $\mathcal{O}_{\delta}\uparrow \mathcal{O}$ as $\delta\downarrow 0$ and
   \begin{equation*}
   \begin{split}
   P(X_t^{s,x}\in \mathcal{O})
   &=P(X_t^{s,x}-\varphi(t)+\varphi(t)\in \mathcal{O})\\
   &\geq P(\varphi(t)\in \mathcal{O}_{\delta}, |X_t^{s,x}-\varphi(t)|<\delta)\\
   &\geq P(\varphi(t)\in \mathcal{O}_{\delta})-P(|X_t^{s,x}-\varphi(t)|\geq \delta).
   \end{split}
   \end{equation*}
   Thus it turns out from the above and the Chebyshev inequality that
   \begin{equation*}
   \begin{split}
   &P(\varphi(t)\in \mathcal{O})-P(X_t^{s,x}\in \mathcal{O})\\
   &\leq P(\varphi(t)\in \mathcal{O}\setminus\mathcal{O}_{\delta})+P(|X_t^{s,x}-\varphi(t)|\geq\delta)\\
   &\leq P(\varphi(t)\in \mathcal{O}\setminus\mathcal{O}_{\delta})+\frac{1}{\delta^p}\mathbb{E}[|X_t^{s,x}-\varphi(t)|^p].
   \end{split}
   \end{equation*}
   Applying Lemma \ref{Bounded solution} and Lemma \ref{Exponential decay}, we have
   \begin{equation*}
   \begin{split}
   \mathbb{E}[|X_t^{s,x}-\varphi(t)|^p]
   &=\lim_{r\rightarrow -\infty}\mathbb{E}[|X_t^{s,x}-X_t^{r,x}|^p]\\
   &\leq \limsup_{r\rightarrow -\infty, r<s}\mathbb{E}[|X_t^{s,x}-X_t^{s,X_s^{r,x}}|^p]\\
   &\leq \limsup_{r\rightarrow -\infty, r<s}e^{-p\big(\alpha-\frac{(p-1)\beta^2}{2}\big)(t-s)}\mathbb{E}[|x-X_s^{r,x}|^p]\\
   &\leq \limsup_{r\rightarrow -\infty, r<s}C(1+|x|^p)e^{-p\big(\alpha-\frac{(p-1)\beta^2}{2}\big)(t-s)}\\
   &= C(1+|x|^p)e^{-p\big(\alpha-\frac{(p-1)\beta^2}{2}\big)(t-s)}.
   \end{split}
   \end{equation*}
   Here $C=C(p,\alpha,\beta, M)$. Then for any $\delta>0$ and $s<t$, we have
   \begin{equation*}
   \begin{split}
   \rho_t(\mathcal{O})-\mu_t(\mathcal{O})
   &=\int_{\mathbb{R}^d}\left(P(\varphi(t)\in \mathcal{O})-P(X_t^{s,x}\in\mathcal{O})\right)\mu_s(dx)\\
   &\leq \int_{\mathbb{R}^d}\left(P(\varphi(t)\in \mathcal{O}\setminus\mathcal{O}_{\delta})+\frac{1}{\delta^p}\mathbb{E}[|X_t^{s,x}-\varphi(t)|^p]\right)\mu_s(dx)\\
   &\leq P(\varphi(t)\in \mathcal{O}\setminus\mathcal{O}_{\delta})+\frac{C}{\delta^p}e^{-p\big(\alpha-\frac{(p-1)\beta^2}{2}\big)(t-s)}\int_{\mathbb{R}^d}(1+|x|^p)\mu_s(dx).
   \end{split}
   \end{equation*}
   Hence for any $\delta>0$, we have
   \begin{equation*}
   \begin{split}
   \rho_t(\mathcal{O})-\mu_t(\mathcal{O})
   &\leq P(\varphi(t)\in \mathcal{O}\setminus\mathcal{O}_{\delta})+\limsup_{s\rightarrow -\infty}\frac{C}{\delta^p}\left(1+\sup_{r\in \mathbb{R}}\int_{\mathbb{R}^d}|x|^p\mu_r(dx)\right)e^{-p\big(\alpha-\frac{(p-1)\beta^2}{2}\big)(t-s)}\\
   &\leq P(\varphi(t)\in \mathcal{O}\setminus\mathcal{O}_{\delta})=\rho_t(\mathcal{O}\setminus\mathcal{O}_{\delta}).
   \end{split}
   \end{equation*}
   Since $\mathcal{O}_{\delta}\uparrow \mathcal{O}$ as $\delta\downarrow 0$, we have
   $$\rho_t(\mathcal{O})-\mu_t(\mathcal{O})\leq \lim_{\delta\downarrow 0}\rho_t(\mathcal{O}\setminus\mathcal{O}_{\delta})=0,$$
   which implies $\rho_t(\mathcal{O})\leq \mu_t(\mathcal{O})$.
  \end{proof} 
  \begin{remark}
	\label{1222-*}
	When we consider the entrance measure, we only consider the law of random path and the perfection of $\varphi$ is not needed, thus the continuity of $\varphi$ is not needed and hence we do not need Condition \ref{k-th growth}. Then the estimates for $p=2$ in Section \ref{Section of Existence and uniqueness of random path} is adequate.
  \end{remark}
By the proof of Theorem \ref{Existence and uniqueness of random path}, we know that $\varphi(t)=L^p-\lim_{s\rightarrow -\infty}X_t^{s,x}$. Then we have the following proposition. Denote by $C_b(\mathbb{R}^d)$ the linear space of all continuous and bounded functions on $\mathbb{R}^d$.
 \begin{proposition}
 	\label{Weakly convergence of P(t,s)}
 	 The entrance measure $\rho_t$ is the limit of $P(t,s,x,\cdot)$ in $\mathcal{P}(\mathbb{R}^d)$ with weak topology, i.e. for all $f\in C_b(\mathbb{R}^d)$, we have
 	$$\lim_{s\rightarrow -\infty}\int_{\mathbb{R}^d}f(y)P(t,s,x,dy)=\int_{\mathbb{R}^d}f(y)\rho_t(dy).$$
 \end{proposition}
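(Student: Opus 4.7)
The plan is to recognise the statement as a direct consequence of the $L^p$--convergence
$$X_t^{s,x} \xrightarrow{L^p} \varphi(t) \quad \text{as } s\to-\infty$$
established inside the proof of Theorem \ref{Existence and uniqueness of random path} (formula \eqref{1220-0}). Indeed, by the definitions of the transition $P(t,s,x,\cdot)$ and of $\rho_t = \mathcal{L}(\varphi(t))$,
$$\int_{\mathbb{R}^d} f(y)\, P(t,s,x,dy) = \mathbb{E}[f(X_t^{s,x})], \qquad \int_{\mathbb{R}^d} f(y)\, \rho_t(dy) = \mathbb{E}[f(\varphi(t))],$$
so the claim reduces to showing $\mathbb{E}[f(X_t^{s,x})] \to \mathbb{E}[f(\varphi(t))]$ as $s\to-\infty$ for every $f\in C_b(\mathbb{R}^d)$.

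The first step is to upgrade the $L^p$--convergence to convergence in probability, which is immediate from Chebyshev's inequality. The second step is to use that $f$ is continuous, so by the continuous mapping theorem $f(X_t^{s,x}) \to f(\varphi(t))$ in probability as $s\to-\infty$. The third step is to apply the bounded convergence theorem: since $|f(X_t^{s,x}) - f(\varphi(t))| \le 2\sup_{y}|f(y)| < \infty$ uniformly, convergence in probability together with uniform boundedness yields $\mathbb{E}[f(X_t^{s,x})] \to \mathbb{E}[f(\varphi(t))]$. Because the limit parameter is continuous, one can phrase this along an arbitrary sequence $s_n\to-\infty$ and conclude that the continuous limit exists and equals the common value.

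There is no genuine obstacle here; the only point worth flagging is the standard passage from $L^p$--convergence to the weak convergence of laws, which uses continuity of $f$ (for the pointwise limit under probability) and boundedness of $f$ (to dominate). No additional regularity of $\varphi$ or of the coefficients beyond what was already invoked in Theorem \ref{Existence and uniqueness of random path} is required, which is consistent with Remark \ref{1222-*} that the laws-level results do not need Condition \ref{k-th growth}.
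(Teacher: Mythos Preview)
Your proposal is correct and follows essentially the same approach as the paper: both reduce the claim to $\mathbb{E}[f(X_t^{s,x})]\to\mathbb{E}[f(\varphi(t))]$ using the $L^p$--convergence from \eqref{1220-0}, continuity of $f$, and a dominated/bounded convergence argument. The only difference is packaging: you invoke convergence in probability plus the continuous mapping theorem and the bounded convergence theorem directly, whereas the paper carries out the underlying subsequence argument by hand (extracting an a.s.\ convergent subsequence from the $L^p$ limit and deriving a contradiction for the $\limsup$ and $\liminf$ separately).
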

\begin{proof}
	Since $\int_{\mathbb{R}^d}f(y)P(t,s,x,dy)=\mathbb{E}f(X_t^{s,x})$ and $\int_{\mathbb{R}^d}f(y)\rho_t(dy)=\mathbb{E}f(\varphi(t))$, we need to prove that for all $f\in C_b(\mathbb{R}^d)$, 
	$$\lim_{s\rightarrow -\infty}\mathbb{E}f(X_t^{s,x})=\mathbb{E}f(\varphi(t)).$$
	First we prove $\limsup_{s\rightarrow -\infty}\mathbb{E}f(X_t^{s,x})\leq\mathbb{E}f(\varphi(t))$. Otherwise there exists a sequence $s_n\downarrow -\infty$ as $n \rightarrow \infty$ and a constant $\lambda=\limsup_{s\rightarrow -\infty}\mathbb{E}f(X_t^{s,x})>\mathbb{E}f(\varphi(t))$ such that $\lim_{n\rightarrow \infty}\mathbb{E}f(X_t^{s_n,x})=\lambda$. Since $\lim_{n\rightarrow \infty}\mathbb{E}[|X_t^{s_n,x}-\varphi(t)|^p]=0$, we know that there exists a subsequence $\{s_{n_k}\}\subseteq \{s_n\}$ such that $X_t^{s_{n_k},x}\xrightarrow{a.s.} \varphi(t)$ as $k \rightarrow \infty$. Thus $f(X_t^{s_{n_k},x})\xrightarrow{a.s.} f(\varphi(t))$. Then by Lebesgue's dominated convergence theorem, we have
	$$\lim_{k\rightarrow \infty}\mathbb{E}f(X_t^{s_{n_k},x})=\mathbb{E}f(\varphi(t)),$$
	which contradicts that
	$$\lim_{k\rightarrow \infty}\mathbb{E}f(X_t^{s_{n_k},x})=\lambda>\mathbb{E}f(\varphi(t)).$$
	Hence
	$$\limsup_{s\rightarrow -\infty}\mathbb{E}f(X_t^{s,x})\leq\mathbb{E}f(\varphi(t)).$$
	Similarly we can also prove that
	$$\liminf_{s\rightarrow -\infty}\mathbb{E}f(X_t^{s,x})\geq\mathbb{E}f(\varphi(t)),$$
	which completes our proof.
\end{proof}

\section{Random quasi-periodic path, quasi-periodic measure and invariant measure}
\subsection{Existence and uniqueness of random quasi-periodic path}
\label{Section of Quasi-periodic path}
   In SDE (\ref{SDE}), if we assume the coefficients $b, \sigma$ are quasi-periodic functions in time $t$, can we obtain a kind of random quasi-periodic path? What should the ``quasi-periodicity" of a random path be defined? We give the following definition.
   
   \begin{definition}
   	  \label{Quasi-periodic random path}
	A measurable path $\varphi: \mathbb{R}\times\Omega\rightarrow \mathbb{R}^d$ is called random quasi-periodic path of periods $\tau_1, \tau_2$ of a semi-flow $u$, where the reciprocals of $\tau_1$ and $\tau_2$ are rationally linearly independent, if it is a random path and there exists $\tilde{\varphi}: \mathbb{R}\times\mathbb{R}\times\Omega\rightarrow\mathbb{R}^d$ such that for any $t,s\in \mathbb{R}$,
	\begin{equation}
		\label{1226-*}
		\begin{cases}
		\tilde{\varphi}(t,t)=\varphi(t)\circ \theta_{-t},\ P-a.s.,\\
		\tilde{\varphi}(t+\tau_1,s)=\tilde{\varphi}(t,s), \ \tilde{\varphi}(t,s+\tau_2)=\tilde{\varphi}(t,s), \ P-a.s..
		\end{cases}
   	\end{equation}
   	We call $\varphi$ a perfect random quasi-periodic path if $\varphi$ is a perfect random path and equation \eqref{1226-*} holds for all $t,s\in \mathbb{R}$, $P-a.s.$. We also say $\varphi$ is a (perfect) random quasi-periodic path of an SDE if u is generated by this SDE.
   \end{definition}

   We give the quasi-periodic condition.
   
   \begin{condition}
   	\label{Quasi-periodic condition}
   	Assume that $b, \sigma$ in SDE (\ref{SDE}) are quasi-periodic functions with periods $\tau_1, \tau_2$, where the reciprocals of $\tau_1$ and $\tau_2$ are rationally linearly independent, which means there exists $\tilde{b}:\mathbb{R}\times\mathbb{R}\times\mathbb{R}^d\rightarrow\mathbb{R}^d$ and $\tilde{\sigma}:\mathbb{R}\times\mathbb{R}\times\mathbb{R}^d\rightarrow\mathbb{R}^{d\times n}$ such that $\tilde{b}(t,t,x)=b(t,x)$, $\tilde{\sigma}(t,t,x)=\sigma(t,x)$ for all $t\in\mathbb{R}, x\in\mathbb{R}^d$ satisfying
   	\begin{equation}
   	\tilde{b}(t+\tau_1,s,x)=\tilde{b}(t,s,x), \ 
   	\tilde{b}(t,s+\tau_2,x)=\tilde{b}(t,s,x),
   	\end{equation}
   	and
   	\begin{equation}
   	\tilde{\sigma}(t+\tau_1,s,x)=\tilde{\sigma}(t,s,x), \ 
   	\tilde{\sigma}(t,s+\tau_2,x)=\tilde{\sigma}(t,s,x).
   	\end{equation}
   \end{condition}
  
  \begin{condition}
  	\label{Quasi-dissipative}
  	Assume $\tilde{b}, \tilde{\sigma}$ in Condition \ref{Quasi-periodic condition} satisfy the following conditions:
  	\begin{description}
  		\item[(1)] There exist some $\alpha>0$ such that for all $x,y\in \mathbb{R}^d$ and $t,s\in \mathbb{R}$,
  		$$(x-y)\left(\tilde{b}(t,s,x)-\tilde{b}(t,s,y)\right)\leq -\alpha(x-y)^2;$$
  	    \item[(2)] There exists a constant $\beta>0$ such that for all $x,y\in \mathbb{R}^d$ and $t,s\in \mathbb{R}$,
  	    $$\|\tilde{\sigma}(t,s,x)-\tilde{\sigma}(t,s,y)\|\leq \beta |x-y|;$$
  	    \item[(3)] There exists $M>0$ such that
  	    $$\sup_{t,s\in \mathbb{R}}|\tilde{b}(t,s,0)|+\sup_{t,s\in \mathbb{R}}\|\tilde{\sigma}(t,s,0)\|\leq M;$$
  	    \item[(4)] There exist $C>0, 0<\gamma\leq 1$ such that for all $x\in \mathbb{R}^d$, $t_1,t_2,s_1,s_2\in \mathbb{R}$,
  	    $$|\tilde{b}(t_1,s_1,x)-\tilde{b}(t_2,s_2,x)|+||\tilde{\sigma}(t_1,s_1,x)-\tilde{\sigma}(t_2,s_2,x)||\leq C(|t_1-t_2|^{\gamma}+|s_1-s_2|^{\gamma}).$$
  	\end{description}
  \end{condition}

  \begin{condition}
	\label{Quasi k growth}
	We also assume $\tilde{b}$ in Condition \ref{Quasi-periodic condition} is $\kappa$-th order growth in $x$ for some $\kappa\geq 1$, i.e. there exist $l>0$ such that for all $x,y\in \mathbb{R}^d$ and $t, s\in \mathbb{R}$,
	$$|\tilde{b}(t,s,x)|\leq l (1+|x|^{\kappa}).$$
  \end{condition}
 Again Condition \ref{Quasi-dissipative} is only needed for perfection.
 Note that Conditions \ref{Quasi-periodic condition}, \ref{Quasi-dissipative} and \ref{Quasi k growth} imply Condition \ref{Dissipative} and \ref{k-th growth}. Now we give the following main theorem.
 
 \begin{theorem}
 	\label{Existence and uniqueness of quasi-periodic random path}
 	Assume Conditions \ref{Quasi-periodic condition}, \ref{Quasi-dissipative} and $\alpha>\frac{(p-1)\beta^2}{2}$ for some $p\geq 2$. Then there exists a unique uniformly $L^p$-bounded random quasi-periodic path of SDE (\ref{SDE}). If we further assume Condition \ref{Quasi k growth} and $p \geq (4+2d)\kappa\vee\frac{2\kappa}{\gamma}$, this unique random quasi-periodic path is perfect.
 \end{theorem}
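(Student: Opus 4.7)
The plan is to transfer Theorem~\ref{Existence and uniqueness of random path} to the two-parameter family of reparameterisations $K^{r_1,r_2}$. Condition~\ref{Quasi-dissipative} implies that, for every $(r_1,r_2)\in\mathbb R^2$, the coefficients $\tilde b^{r_1,r_2}(t,x)=\tilde b(t+r_1,t+r_2,x)$ and $\tilde\sigma^{r_1,r_2}$ satisfy Condition~\ref{Dissipative} with the \emph{same} constants $\alpha,\beta,M$, independent of $(r_1,r_2)$. Hence Theorem~\ref{Existence and uniqueness of random path} furnishes, for each such pair, a unique uniformly $L^p$-bounded random path
\[
\varphi^{r_1,r_2}(t,\omega)=L^p\text{-}\lim_{s\to-\infty}K^{r_1,r_2}(t,s,x,\omega)
\]
of the reparameterised SDE. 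I set $\tilde\varphi(r_1,r_2,\omega):=\varphi^{r_1,r_2}(0,\omega)$ and take $\varphi$ to be the unique random path of \eqref{SDE} supplied by Theorem~\ref{Existence and uniqueness of random path}. Passing to the $L^p$-limit $s\to-\infty$ in the intertwining relation $K^{r,r}(t,s,x,\omega)=u(t+r,s+r,x,\theta_{-r}\omega)$ gives $\varphi^{r,r}(0,\omega)=\varphi(r,\theta_{-r}\omega)$, which is exactly the diagonal identity $\tilde\varphi(t,t)=\varphi(t)\circ\theta_{-t}$ in \eqref{1226-*}.

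\textbf{Periodicity and uniqueness.} By the periodicities built into Condition~\ref{Quasi-periodic condition}, the reparameterised SDEs with parameters $(r_1+\tau_1,r_2)$ and $(r_1,r_2)$ have identical coefficients; the uniqueness half of Theorem~\ref{Existence and uniqueness of random path} then forces their unique $L^p$-bounded random paths to coincide, so $\tilde\varphi(r_1+\tau_1,r_2)=\tilde\varphi(r_1,r_2)$ $P$-a.s., and the analogous identity holds for the second variable with period $\tau_2$. Uniqueness of $\varphi$ itself is immediate: a uniformly $L^p$-bounded random quasi-periodic path is in particular a uniformly $L^p$-bounded random path, so Theorem~\ref{Existence and uniqueness of random path} forces it to equal $\varphi$.

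\textbf{Perfection (the main obstacle).} Under the strengthened hypothesis $p\geq(4+2d)\kappa$ together with Condition~\ref{Quasi k growth}, Theorem~\ref{Existence and uniqueness of random path} already delivers a single $P$-null set outside which $\varphi$ satisfies \eqref{Random Path} for all $t\geq s$. The remaining issue is that each a.s.\ periodicity identity above carries an exceptional set that may depend on $(r_1,r_2)$, whereas perfection requires a single null set valid for \emph{all} $(r_1,r_2)\in\mathbb R^2$ simultaneously. My plan is to construct a version of $\tilde\varphi$ that is jointly continuous in $(r_1,r_2)$ and then extend the a.s.\ identities from a countable dense set of pairs to all of $\mathbb R^2$ by continuity, after which a standard cover argument identifies a common null set. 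The continuity itself will be obtained from a Kolmogorov-type criterion applied to $(r_1,r_2)\mapsto\varphi^{r_1,r_2}(0,\cdot)$: the $\gamma$-Hölder regularity of Condition~\ref{Quasi-dissipative}(4), the $\kappa$-th order growth in Condition~\ref{Quasi k growth}, and the uniform moment and contraction estimates of Lemmas~\ref{Bounded solution} and \ref{Exponential decay} will together yield an $L^p$-modulus of continuity in the parameters, and the hypothesis $p\geq 2\kappa/\gamma$ is exactly what makes the resulting Kolmogorov exponent strictly exceed the two-dimensional parameter dimension. These estimates are precisely the content of the (postponed) Lemma~\ref{Continuous property of K}. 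The main technical subtlety I expect to handle is obtaining this modulus of continuity uniformly in the initial time $s$ as $s\to-\infty$, so that the bound passes through to the $L^p$-limit defining $\varphi^{r_1,r_2}$; this will require combining the Hölder-in-parameter estimates with the dissipative contraction of Condition~\ref{Quasi-dissipative}(1) in a Gronwall argument that is uniform in $s$.
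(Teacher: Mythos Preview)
Your proposal is correct and follows essentially the same route as the paper: reparameterise via $K^{r_1,r_2}$, apply Theorem~\ref{Existence and uniqueness of random path} uniformly in the parameters to obtain $\varphi^{r_1,r_2}$, read off the diagonal identity from the intertwining $K^{r,r}(t,s,x,\omega)=u(t+r,s+r,x,\theta_{-r}\omega)$, derive the periodicities from those of $\tilde b,\tilde\sigma$, and perfect via a Kolmogorov continuity argument (Lemma~\ref{Continuous property of K}) exploiting Condition~\ref{Quasi-dissipative}(4) and the exponential weight that makes the parameter-modulus estimate uniform in $s$. One small addition the paper makes explicit: for the diagonal identity $\tilde\varphi(t,t)=\varphi(t)\circ\theta_{-t}$ to be perfected you also need a continuous version of $t\mapsto\varphi(t)\circ\theta_{-t}$ (Lemma~\ref{Continuous property of K}(v)), not only joint continuity of $\tilde\varphi$.
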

  \begin{proof}
  	Uniqueness: Applying Theorem \ref{Existence and uniqueness of random path}, we know that if there exists a uniformly $L^p$-bounded (perfect) random quasi-periodic path, it must be the (perfect) random path $\varphi$ defined in Theorem \ref{Existence and uniqueness of random path}. So uniqueness holds.
  	
  	Existence: We aim to show that the random path $\varphi$ in Theorem \ref{Existence and uniqueness of random path} is indeed a random quasi-periodic path.
  	Note that the solution of SDE (\ref{SDE}) $u(t,s,x)$ can be written as for all $s\leq t, x\in \mathbb{R}^d$
  	$$u(t,s,x)=x+\int_s^t b(r,u(r,s,x))dr+\int_s^t \sigma(r,u(r,s,x))dW_r,  \ P-a.s.$$
  	Then similar to \cite{Arnold}, for a fixed real number $r$, by the measure preserving property of $\theta_{-r}$,
  	\begin{equation}
  		\label{Solution u shift time}
  		\begin{split}
  		&u(t+r,s+r,x)\circ \theta_{-r}\\
  		&=\left(x+\int_{s+r}^{t+r}b(v,u(v,s+r,x))dv+\int_{s+r}^{t+r}\sigma(v,u(v,s+r,x))dW_v\right)\circ \theta_{-r}\\
  		&=x+\left(\int_{s+r}^{t+r}b(v,u(v,s+r,x))dv\right)\circ \theta_{-r}+\left(\int_{s+r}^{t+r}\sigma(v,u(v,s+r,x))dW_v\right)\circ \theta_{-r}\\
  		&=x+\int_{s}^{t}b(v+r,u(v+r,s+r,x)\circ \theta_{-r})dv+\int_{s}^{t}\sigma(v+r,u(v+r,s+r,x)\circ \theta_{-r})dW_v\\
  		&=x+\int_{s}^{t}\tilde{b}(v+r, v+r, u(v+r,s+r,x)\circ \theta_{-r})dv\\
  		& \quad +\int_{s}^{t}\tilde{\sigma}(v+r, v+r,u(v+r,s+r,x)\circ \theta_{-r})dW_v, \ P-a.s.\\
  		\end{split}
  	\end{equation}
  	Denote $u^r(t,s,x):=u(t+r,s+r,x)\circ\theta_{-r}$, $\tilde{b}^{r_1,r_2}(t,x):=\tilde{b}(t+r_1,t+r_2,x)$ and $\tilde{\sigma}^{r_1,r_2}(t,x):=\tilde{\sigma}(t+r_1,t+r_2,x)$, then for any fixed $r$, equation (\ref{Solution u shift time}) can be written as 
  	\begin{equation}
  	\label{Solution u_r}
  	\begin{split}
  	u^r(t,s,x)
  	=x+\int_s^t\tilde{b}^{r,r}(v,u^r(v,s,x))dv+\int_s^t\tilde{\sigma}^{r,r}(v,u^r(v,s,x))dW_v,  \ P-a.s.
  	\end{split}
  	\end{equation}
	Note that the null sets in equations \eqref{Solution u shift time} and \eqref{Solution u_r} depend on $t,s,r,x$. 
  	Since Condition \ref{Quasi-dissipative} holds, then for all $r_1,r_2\in \mathbb{R}$, $\tilde{b}^{r_1,r_2}$ and $\tilde{\sigma}^{r_1,r_2}$ satisfy 
  	$$(x-y)\left(\tilde{b}^{r_1,r_2}(t,x)-\tilde{b}^{r_1,r_2}(t,y)\right)\leq -\alpha(x-y)^2,$$
  	and
  	$$\|\tilde{\sigma}^{r_1,r_2}(t,x)-\tilde{\sigma}^{r_1,r_2}(t,y)\|\leq \beta |x-y|,$$
  	for all $t\in \mathbb{R}$, $x,y\in\mathbb{R}^d$.
  	Thus the following equation
  	\begin{equation}
  	\label{Solution K_r_1,r_2}
  	\begin{split}
  	K^{r_1,r_2}(t,s,x)
  	=x+\int_s^t\tilde{b}^{r_1,r_2}(v,K^{r_1,r_2}(v,s,x))dv+\int_s^t\tilde{\sigma}^{r_1,r_2}(v,K^{r_1,r_2}(v,s,x))dW_v,
  	\end{split}
  	\end{equation}
  	has a unique solution, denoted by $K^{r_1,r_2}(t,s,x)$, where $r_1,r_2\in \mathbb{R}$ are regarded as parameters. Since $\alpha>\frac{(p-1)\beta^2}{2}$, similar to the proof of Theorem \ref{Existence and uniqueness of random path}, we know that there exist the random paths $\varphi^r(t), \varphi^{r_1,r_2}(t)$ of $u^r, K^{r_1,r_2}$ respectively such that for all $r,r_1,r_2,t\in \mathbb{R}$
  	\begin{equation}
  	\label{L2-lim K u}
  		\begin{cases}
  		\varphi^r(t)=L^p-\lim_{s\rightarrow -\infty}u^r(t,s,x)=L^p-\lim_{s\rightarrow -\infty}u(t+r,s+r,x)\circ \theta_{-r}\\
  		\varphi^{r_1,r_2}(t)=L^p-\lim_{s\rightarrow -\infty}K^{r_1,r_2}(t,s,x).
  		\end{cases}
	  \end{equation}
	Since $\varphi$ is the unique random path of SDE (\ref{SDE}), by Theorem \ref{Existence and uniqueness of random path} we have for all $r,t\in \mathbb{R}$
	\begin{equation}
		\label{2020jun5}
		\varphi^r(t)=\varphi(t+r)\circ\theta_{-r}, \ P-a.s..
	\end{equation}
  	Comparing (\ref{Solution u_r}) and (\ref{Solution K_r_1,r_2}), obviously we know that for all $t\geq s, s, r\in \mathbb{R}, x\in \mathbb{R}^d$
	\begin{eqnarray}\label{2019aug1}
	K^{r,r}(t,s,x)=u^r(t,s,x), \ P-a.s.
	\end{eqnarray} 
	and thus for all $r,t\in \mathbb{R}$,
	\begin{equation}
		\label{2020jun6}
		\varphi^{r,r}(t)=\varphi^r(t), P-a.s..
	\end{equation}
  	By quasi-periodicity of $\tilde{b}$ and $\tilde{\sigma}$, we know that $\tilde{b}^{r_1+\tau_1, r_2}=\tilde{b}^{r_1,r_2}=\tilde{b}^{r_1,r_2+\tau_2}$, $\tilde{\sigma}^{r_1+\tau_1, r_2}=\tilde{\sigma}^{r_1,r_2}=\tilde{\sigma}^{r_1,r_2+\tau_2}$. Thus it turns out that for all $t\geq s, s, r_1, r_2\in \mathbb{R}, x\in \mathbb{R}^d$
  	$$K^{r_1+\tau_1,r_2}(t,s,x)=K^{r_1,r_2}(t,s,x)=K^{r_1,r_2+\tau_2}(t,s,x), \ P-a.s..$$
	Then for all $t, r_1, r_2\in \mathbb{R}$
	\begin{equation}
		\label{2020jun7}
		\varphi^{r_1+\tau_1,r_2}(t)=\varphi^{r_1,r_2}(t)=\varphi^{r_1,r_2+\tau_2}(t), \ P-a.s..
	\end{equation}
	For all $t,s\in \mathbb{R}$, let 
	\begin{equation}
		\label{2020jun9}
		\tilde{\varphi}(t,s,\omega):=\varphi^{t,s}(0,\omega).
	\end{equation}
	It follows from \eqref{2020jun5}, \eqref{2020jun6} and \eqref{2020jun7} that for all $t\in \mathbb{R}$
	\begin{equation}
		\label{2020jun10}
		\tilde{\varphi}(t,t)=\varphi(t)\circ \theta_{-t}, \ P-a.s.,
	\end{equation}
	and for all $t,s\in \mathbb{R}$
	\begin{equation}
		\label{2020jun11}
		\tilde{\varphi}(t+\tau_1,s)=\tilde{\varphi}(t,s), \tilde{\varphi}(t,s+\tau_2)=\tilde{\varphi}(t,s), \ P-a.s..
	\end{equation}
	Therefore, this random path $\varphi$ is a random quasi-periodic path.

	Now with the Condition \ref{Quasi k growth} and assumption $p\gamma \geq 2\kappa$, by $(v-vi)$ in Lemma \ref{Continuous property of K}, $\varphi(t)\circ \theta_{-t}$ and $\tilde{\varphi}(t,s)$ are $P-a.s.$ continuous with respect to $t$ and $(t,s)$, respectively. 
	Then use the same perfection argument as in equation \eqref{1220-*}, we know that equations \eqref{2020jun10} and \eqref{2020jun11} hold for all $t\in \mathbb{R}$ and $t,s\in \mathbb{R}$ outside a null set, which ends the proof.
  \end{proof}

\begin{remark}
	\label{Solution K shift time in r}
	We can conduct similar operations as in (\ref{Solution u shift time}) and \eqref{Solution u_r}  to re-parameterised equation (\ref{Solution K_r_1,r_2}). Noticing 
	\begin{eqnarray}\label{2019aug2}
	{\tilde b}^{r_1,r_2}(v+r,\cdot)={\tilde b}^{r_1+r,r_2+r}(v,\cdot), \ \ {\tilde \sigma}^{r_1,r_2}(v+r,\cdot)={\tilde \sigma}^{r_1+r,r_2+r}(v,\cdot)
	\end{eqnarray} 
	and using the same argument as in the proof of (\ref{2019aug1}), we can conclude important property
	(\ref{2019aug3}).	
This property is similar to the shift property of the autonomous stochastic differential equations which leads to their cocycle property with a perfection argument. 
Though there is nothing similar to be said about the original SDEs due to the time dependency of the coefficients, this property holds due to  "time-invariance"
of the re-parameterised coefficients in the sense of (\ref{2019aug2}). 
\end{remark}
We state the following continuity lemma which was needed in the proof of Theorem \ref{Existence and uniqueness of random path} and Theorem \ref{Existence and uniqueness of quasi-periodic random path}. It is noted that its proof is independent of Theorem \ref{Existence and uniqueness of random path} and Theorem \ref{Existence and uniqueness of quasi-periodic random path} and their proofs. 
\begin{lemma}
	\label{Continuous property of K}
	Assume Conditions \ref{Dissipative}, \ref{Quasi-periodic condition}, \ref{Quasi-dissipative} and \ref{Quasi k growth} hold. Let $u(t,s,x), u^r(t,s,x), K^{r_1,r_2}(t,s,x)$ are the solutions of SDE \eqref{SDE}, SDE \eqref{Solution u_r} and \eqref{Solution K_r_1,r_2}, respectively. Assume further that $\alpha>\frac{(p-1)\beta^2}{2}$ for some constant $p\geq 2$, we have
	\begin{itemize}
			\item [(i)] If $p\geq (4+2d)\kappa$, $u(t,s,x)$ is continuous with respect to $(t,s,x),$ $P-a.s.$;
			\item [(ii)] If $p\gamma\wedge \frac{p}{2}\geq (3+d)\kappa$, $u^r(t,s,x)$ is continuous with respect to $(r,t,s,x),$ $P-a.s.$;
			\item [(iii)] If $p\gamma\wedge\frac{p}{2} \geq (4+d)\kappa$, $K^{r_1,r_2}(t,s,x)$ is continuous with respect to $(r_1,r_2,t,s,x)$, $P-a.s.$.
			\item [(iv)] If $p\geq 2\kappa$, $\varphi(t)$ defined in \eqref{1220-0} is continuous with respect to $t$, $P-a.s.$.
			\item [(v)] If $p\gamma\geq \kappa$, $\varphi(t)\circ \theta_{-t}$ is continuous with respect to $t$, $P-a.s.$.
			\item [(vi)] If $p\gamma\geq 2\kappa$, $\tilde{\varphi}(t,s)$ defined in \eqref{2020jun9} is continuous with respect to $(t,s)$, $P-a.s.$.
			\item [(vii)] If $p\gamma\wedge\frac{p}{2} \geq 2\kappa$, $\varphi^r(t)$ defined in \eqref{L2-lim K u} is continuous with respect to $(r,t)$, $P-a.s.$.
			\item [(viii)] If $p\gamma\wedge \frac{p}{2}\geq 3\kappa$, $\varphi^{r_1,r_2}(t)$ defined in \eqref{L2-lim K u} is continuous with respect to $(r_1,r_2,t)$, $P-a.s.$.
	\end{itemize}
	
\end{lemma}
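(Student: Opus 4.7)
The master strategy is Kolmogorov's continuity criterion applied to each random field in turn. I would first produce one bundle of $L^q$-moment increment bounds for $K^{r_1,r_2}(t,s,x)$ in all five of its arguments, then convert these into increment bounds for $u$, $u^r$ and for the limit fields $\varphi,\varphi^r,\varphi^{r_1,r_2},\tilde\varphi$, and finally match the Kolmogorov dimension condition against the parameter space in each case. Concretely, on $\{|x|\vee|x'|\le R\}$ and uniformly in $s$, the plan is to establish a master bound
\begin{equation*}
\mathbb{E}|K^{r_1,r_2}(t,s,x)-K^{r_1',r_2'}(t',s',x')|^{q}\le C_R\bigl(|r_1-r_1'|^{q\gamma}+|r_2-r_2'|^{q\gamma}+|t-t'|^{q/2}+|s-s'|^{q/2}+|x-x'|^{q}\bigr)
\end{equation*}
for every $q\le p$ with $\alpha>\tfrac{(q-1)\beta^2}{2}$. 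The parameter pieces $|r_i-r_i'|^{q\gamma}$ come from applying It\^o's formula to $e^{\lambda v}|K^{r_1,r_2}_v-K^{r_1',r_2'}_v|^q$, splitting the drift and diffusion differences into a dissipative piece (Condition \ref{Quasi-dissipative}(1)--(2)) and a parameter piece of size $|r-r'|^\gamma$ (Condition \ref{Quasi-dissipative}(4)), and choosing $\lambda=q(\alpha-\tfrac{(q-1)\beta^2}{2})>0$ so that the exponential weight cancels the Gronwall blow-up in $s$. The $(t,s)$- and $x$-pieces come from Burkholder--Davis--Gundy, the $\kappa$-growth Condition \ref{Quasi k growth}, and Lemma \ref{Exponential decay}.

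Items (i)--(iii) then follow by reading the master estimate on $\mathbb{R}^{2+d}$, $\mathbb{R}^{3+d}$, $\mathbb{R}^{4+d}$ respectively, and taking $q=p/\kappa$. This choice is dictated by the fact that the $(t,s)$-increment bound involves $\mathbb{E}|\tilde b(v,K)|^q\le C(1+\mathbb{E}|K|^{q\kappa})$, and $\mathbb{E}|K|^{q\kappa}$ is controllable from the standing hypothesis $\alpha>\tfrac{(p-1)\beta^2}{2}$ via Lemma \ref{Bounded solution} precisely when $q\kappa\le p$. The substitution $q=p/\kappa$ then turns each Kolmogorov inequality of the form (exponent) $>$ (dim) into its $p$-version with an extra $\kappa$ factor on the right, recovering all three stated thresholds. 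Case (ii) identifies $u^r=K^{r,r}$ via \eqref{2019aug1} and restricts to the diagonal, and case (i) further sets $r\equiv 0$ to recover $u$; the latter restriction removes all $\gamma$-Hölder exponents and replaces them by $q/2$, yielding $p\ge(4+2d)\kappa$.

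For the limit fields (iv)--(viii), I would use that $\varphi,\varphi^r,\varphi^{r_1,r_2}$ are pointwise $L^p$-limits of $u,u^r,K^{r_1,r_2}$ as $s\to-\infty$ via \eqref{L2-lim K u}, and that $\tilde\varphi(t,s)=\varphi^{t,s}(0)$ by \eqref{2020jun9}. The parameter-perturbation portion of the master estimate is uniform in $s$, so Fatou transfers it directly to the limit processes with the $(s',s)$-argument removed. The $t$-increment bound for $\varphi^{r_1,r_2}$ is then obtained by repeating the BDG argument on $\varphi^{r_1,r_2}$ itself, using its own uniform $L^p$-bound (Theorem \ref{Existence and uniqueness of random path}) in place of Lemma \ref{Bounded solution}. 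A second pass of Kolmogorov on the reduced parameter spaces (dimensions $1,1,2,2,3$ for (iv)--(viii)) produces the listed thresholds, case (v) being the diagonal of (vi) via \eqref{2020jun10} so that only the $\gamma$-Hölder bound appears, which explains the asymmetric condition $p\gamma\ge\kappa$ versus $p\ge 2\kappa$ in (iv).

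The hard part is securing the uniform-in-$s$ character of the parameter-perturbation estimate: a naive It\^o--Gronwall bound gives constants of order $e^{C(t-s)}$ that diverge as $s\to-\infty$ and are useless for the limit fields. Extracting a positive $\lambda=q(\alpha-\tfrac{(q-1)\beta^2}{2})$ from the dissipativity gap is the crucial trick, and it is precisely this constraint that locks the admissible exponent at $q\le p$ and, after the $\kappa$-growth adjustment, at $q\le p/\kappa$, producing the $\kappa$ factor in every threshold. Once this uniform master estimate is in place, the remainder is a routine case-by-case matching of Kolmogorov exponents against parameter dimensions.
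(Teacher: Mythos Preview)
Your proposal is correct and matches the paper's proof essentially line for line: the paper obtains the $(t,s,x)$-increment bound for $u$ and $K^{r_1,r_2}$ at the $L^{p/\kappa}$ level via BDG plus the $\kappa$-growth condition and Lemma~\ref{Bounded solution}, obtains the uniform-in-$s$ $(r_1,r_2)$-increment bound by applying It\^o's formula to $e^{\lambda v}|\hat K_v|^p$ with $\lambda$ drawn from the dissipativity gap, transfers all estimates to the limit fields by Fatou under the $L^p$ convergence \eqref{L2-lim K u}, and closes with Kolmogorov. The only cosmetic differences are that the paper treats (i) directly for $u$ rather than as the $r=0$ specialization of $u^r$, and derives the parameter bound \eqref{2020jun4} at the $p$-level first and then passes to $p/\kappa$ via Jensen (your equation works directly at $q=p/\kappa$, which is equally valid since $\alpha>\tfrac{(q-1)\beta^2}{2}$ for all $q\le p$).
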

\begin{proof}
	$(i)$ Note that 
	$$u(t,s,x)=x+\int_s^tb(v,u(v,s,x))dv+\int_s^t\sigma(v,u(v,s,x))dW_v.$$
	For any $t>t'>s$ with $|t-t'|<1$, we have
	\begin{equation}
		\label{1220-1}
		\begin{split}
		&E\left[|u(t,s,x)-u(t',s,x)|^{\frac{p}{\kappa}}\right]\\
		&=E\left[\left|\int_{t'}^tb(v,u(v,s,x))dv+\int_{t'}^t\sigma(v,u(v,s,x))dW_v\right|^{\frac{p}{\kappa}}\right]\\
		&\leq C(p,\kappa)\left(E\left[\left|\int_{t'}^tb(v,u(v,s,x))dv\right|^{\frac{p}{\kappa}}\right]+E\left[\left|\int_{t'}^t\sigma(v,u(v,s,x))dW_v\right|^{\frac{p}{\kappa}}\right]\right)\\
		&=: C(p,\kappa)(I+II).
		\end{split}
	\end{equation}
	Since $\alpha>\frac{(p-1)\beta^2}{2}$, by Lemma \ref{Bounded solution}, we know that for all $v\geq s$
	$$E[|u(v,s,x)|^{p}]\leq C(p, \alpha, \beta, M)(1+|x|^{p}).$$
	Then we can calculate
	\begin{equation}
		\label{1220-2}
		\begin{split}
		I&\leq E\left[\int_{t'}^t|b(v,u(v,s,x))|^{\frac{p}{\kappa}}dv\cdot|t-t'|^{({\frac{p}{\kappa}}-1)}\right]\\
		&\leq C(p,\kappa,l)\int_{t'}^tE[1+|u(v,s,x)|^{p}]dv\cdot|t-t'|^{({\frac{p}{\kappa}}-1)}\\
		&\leq C(p, \kappa, l,\alpha, \beta, M)(1+|x|^{p})|t-t'|^{\frac{p}{\kappa}}.
		\end{split}
	\end{equation}
	By B-D-G inequality, we have
	\begin{equation}
		\label{1220-3}
		\begin{split}
		II&\leq C(p,\kappa)E\left[\left(\int_{t'}^t\|\sigma(v,u(v,s,x))\|^2dv\right)^{\frac{p}{2\kappa}}\right]\\
		&\leq C(p,\kappa)\int_{t'}^tE[\|\sigma(v,u(v,s,x))\|^{\frac{p}{\kappa}}]dv\cdot|t-t'|^{(\frac{p}{2\kappa}-1)}\\
		&\leq C(p,\kappa, \beta, M)\int_{t'}^tE\big[1+|u(v,s,x)|^{\frac{p}{\kappa}}\big]dv\cdot|t-t'|^{(\frac{p}{2\kappa}-1)}\\
		&\leq C(p,\kappa,\alpha, \beta, M)(1+|x|^{\frac{p}{\kappa}})|t-t'|^{\frac{p}{2\kappa}}.
		\end{split}
	\end{equation}
	Then it follows from \eqref{1220-1}-\eqref{1220-3} that
	\begin{equation}
		\label{1220-4}
		E\big[|u(t,s,x)-u(t',s,x)|^{\frac{p}{\kappa}}\big]\leq C(p, \kappa, l,\alpha, \beta, M)(1+|x|^{p})|t-t'|^{\frac{p}{2\kappa}}.
	\end{equation}
	Now for any $t>s>s', x,x'\in \mathbb{R}^d$ with $|s-s'|<1$, let 
	\begin{equation*}
		\begin{cases}
			\hat{u}(t):=u(t,s,x)-u(t,s',x')\\
			\hat{b}(t):=b(t,u(t,s,x))-b(t,u(t,s',x'))\\
			\hat{\sigma}(t):=\sigma(t,u(t,s,x))-\sigma(t,u(t,s',x')).
		\end{cases}
	\end{equation*}
	Apply It$\hat{\rm o}$ formula to $|\hat{u}(t)|^{\frac{p}{\kappa}}$ on $[s,t]$, we have
	\begin{equation}
		\begin{split}
		&|\hat{u}(t)|^{\frac{p}{\kappa}}\\
		&=|\hat{u}(s)|^{\frac{p}{\kappa}}+{\frac{p}{\kappa}}\int_s^t|\hat{u}(v)|^{({\frac{p}{\kappa}}-2)}\left(\hat{u}(v)\cdot \hat{b}(v)+\frac{p/\kappa-1}{2}\|\hat{\sigma}(v)\|^2\right)dv+{\frac{p}{\kappa}}\int_s^t|\hat{u}(v)|^{({\frac{p}{\kappa}}-2)}\hat{u}(v)\hat{\sigma}(v)dW_v\\
		&\leq |\hat{u}(s)|^{\frac{p}{\kappa}}-{\frac{p}{\kappa}}\left(\alpha-\frac{(p/\kappa-1)\beta^2}{2}\right)\int_s^t|\hat{u}(v)|^{\frac{p}{\kappa}}dv+{\frac{p}{\kappa}}\int_s^t|\hat{u}(v)|^{({\frac{p}{\kappa}}-2)}\hat{u}(v)\hat{\sigma}(v)dW_v\\
		&\leq |\hat{u}(s)|^{\frac{p}{\kappa}}+{\frac{p}{\kappa}}\int_s^t|\hat{u}(v)|^{({\frac{p}{\kappa}}-2)}\hat{u}(v)\hat{\sigma}(v)dW_v.
		\end{split}
	\end{equation}
	Taking expectation on both side, we have
	\begin{equation}
		\begin{split}
		&E\big[|\hat{u}(t)|^{\frac{p}{\kappa}}\big]\leq E\big[|\hat{u}(s)|^{\frac{p}{\kappa}}\big]=E\left[\left|x-x'-\int_{s'}^sb(v,u(v,s',x'))dv-\int_{s'}^s\sigma(v,u(v,s',x'))dW_v\right|^{\frac{p}{\kappa}}\right]\\
		&\leq C(p,\kappa)\left(|x-x'|^{\frac{p}{\kappa}}+E\left[\left|\int_{s'}^sb(v,u(v,s',x'))dv\right|^{\frac{p}{\kappa}}\right]+E\left[\left|\int_{s'}^s\sigma(v,u(v,s',x'))dW_v\right|^{\frac{p}{\kappa}}\right]\right).
		\end{split}
	\end{equation}
	Similar to \eqref{1220-1}, we have
	\begin{equation}
	\label{1220-5}
	\begin{split}
		&E\big[|\hat{u}(t)|^{\frac{p}{\kappa}}\big]=E\big[|u(t,s,x)-u(t,s',x')|^{\frac{p}{\kappa}}\big]\\
		&\leq C(p, \kappa, l,\alpha, \beta, M)(1+|x'|^{p})\big(|s-s'|^{\frac{p}{2\kappa}}+|x-x'|^{\frac{p}{\kappa}}\big).
	\end{split}
	\end{equation}
	Comparing \eqref{1220-4} and \eqref{1220-5}, we conclude that
	\begin{equation}
		\label{1220-6}
		\begin{split}
			&E\big[|u(t,s,x)-u(t',s',x')|^{\frac{p}{\kappa}}\big]\\
			&\leq C(p, \kappa, l,\alpha, \beta, M)(1+|x|^{p}+|x'|^{p})\big(|t-t'|^{\frac{p}{2\kappa}}+|s-s'|^{\frac{p}{2\kappa}}+|x-x'|^{\frac{p}{\kappa}}\big).
		\end{split}
	\end{equation}
	Since $\alpha>\frac{(p-1)\beta^2}{2}$ and $p\geq (4+2d)\kappa$, we can choose $p'>p$, i.e. $\frac{p'}{2\kappa}>2+d$ such that $\alpha>\frac{(p'-1)\beta^2}{2}$ and \eqref{1220-6} holds for $p'$. Then by Kolmogorov's continuity criterion, we know that $u(t,s,x)$ is continuous with respect to $(t,s,x)$ $P-a.s.$.

	Next we prove $(iii)$, and $(ii)$ can be obtained in a similar way. Similar to the estimation of $u$ in \eqref{1220-6}, we know that for all $t,t',s,s'\in \mathbb{R}, x,x'\in \mathbb{R}^d$, $r_1,r_2\in \mathbb{R}$ with $|t-t'|<1, |s-s'|<1$,
	\begin{equation}
		\label{1220-7}
		\begin{split}
			&E\big[|K^{r_1,r_2}(t,s,x)-K^{r_1,r_2}(t',s',x')|^{\frac{p}{\kappa}}\big]\\
			&\leq C(p, \kappa, l,\alpha, \beta, M)(1+|x|^{p}+|x'|^{p})(|t-t'|^{\frac{p}{2\kappa}}+|s-s'|^{\frac{p}{2\kappa}}+|x-x'|^{\frac{p}{\kappa}}).
		\end{split}
	\end{equation}
	Now for all $r_1,r_1',r_2,r_2'\in \mathbb{R}$, let 
	\begin{equation*}
		\begin{cases}
			\hat{K}_t:=K^{r_1,r_2}(t,s,x)-K^{r'_1,r'_2}(t,s,x)\\
			\hat{b}^*_t:=\tilde{b}^{r_1,r_2}(t,K^{r_1,r_2}(t,s,x))-\tilde{b}^{r'_1,r'_2}(t,K^{r'_1,r'_2}(t,s,x))\\
			\hat{\sigma}^*_t:=\tilde{\sigma}^{r_1,r_2}(t,K^{r_1,r_2}(t,s,x))-\tilde{\sigma}^{r'_1,r'_2}(t,K^{r'_1,r'_2}(t,s,x)).
		\end{cases}
	\end{equation*}
	It follows from \eqref{Solution K_r_1,r_2} that
	\begin{align*}
		\hat{K}_t=\int_s^t\hat{b}^*_vdv+\int_s^t\hat{\sigma}^*_vdW_v.
	\end{align*}
	Now applying It$\hat{\rm o}$'s formula to $e^{\lambda t}|\hat{K}_t|^p$ for some $\lambda>0$ on $[s, t]$, we have
	\begin{equation}
		\label{2020jun1}
		\begin{split}
			e^{\lambda t}|\hat{K}_t|^p=&\int_s^t e^{\lambda v}\bigg(\lambda |\hat{K}_v|^p+p|\hat{K}_v|^{(p-2)}\hat{K}_v\cdot \hat{b}^*_v+\frac{p(p-1)}{2}|\hat{K}_v|^{(p-2)}|\hat{\sigma}^*_v|^2 \bigg) dv\\
			&+\int_s^tpe^{\lambda v}|\hat{K}_v|^{(p-2)}\hat{K}_v\hat{\sigma}^*_vdW_v.
		\end{split}
	\end{equation}
	Note that 
	\begin{equation*}
		\begin{split}
		|\hat{K}_v|^{(p-2)}\hat{K}_v\cdot \hat{b}^*_v
		=&|\hat{K}_v|^{(p-2)}\hat{K}_v\cdot\big(\tilde{b}^{r_1,r_2}(v,K^{r_1,r_2}(v,s,x))-\tilde{b}^{r'_1,r'_2}(v,K^{r_1,r_2}(v,s,x))\big)\\
		&+|\hat{K}_v|^{(p-2)}\hat{K}_v\cdot\big(\tilde{b}^{r'_1,r'_2}(v,K^{r_1,r_2}(v,s,x))-\tilde{b}^{r'_1,r'_2}(v,K^{r'_1,r'_2}(v,s,x))\big)\\
		\leq& C|\hat{K}_v|^{(p-1)}(|r_1-r'_1|^{\gamma}+|r_2-r'_2|^{\gamma})-\alpha|\hat{K}_v|^p.
		\end{split}
	\end{equation*}
	Then for arbitrary $\epsilon>0$, by Young inequality, we have
	\begin{equation}
		\label{2020jun2}
		\begin{split}
			|\hat{K}_v|^{(p-2)}\hat{K}_v\cdot \hat{b}^*_v \leq C(p,\epsilon)(|r_1-r'_1|^{p\gamma}+|r_2-r'_2|^{p\gamma})-(\alpha-\epsilon)|\hat{K}_v|^p
		\end{split}
	\end{equation}
	Similarly, we have
	\begin{equation}
		\label{2020jun3}
		\begin{split}
			|\hat{K}_v|^{(p-2)}|\hat{\sigma}^*_v|^2\leq C(p,\beta,\epsilon)(|r_1-r'_1|^{p\gamma}+|r_2-r'_2|^{p\gamma})+(\beta^2+\epsilon)|\hat{K}_v|^p
		\end{split}
	\end{equation}
	Then taking expectation on both sides of \eqref{2020jun1}, we conclude from \eqref{2020jun2} and \eqref{2020jun3} that
	\begin{equation*}
		\begin{split}
			e^{\lambda t}E[|\hat{K}_t|^p]
			\leq&\frac{1}{\lambda}C(p,\beta,\epsilon)(|r_1-r'_1|^{p\gamma}+|r_2-r'_2|^{p\gamma})e^{\lambda t}\\
			&+\bigg( \lambda-p\big(\alpha-\frac{(p-1)\beta^2}{2}-2\epsilon\big) \bigg)\int_s^te^{\lambda v}E[|\hat{K}_v|^p]dv.
		\end{split}
	\end{equation*}
	Since $\alpha>\frac{(p-1)\beta^2}{2}$, we choose $\epsilon=\frac{1}{3}\big(\alpha-\frac{(p-1)\beta^2}{2}\big)>0$ and $\lambda=p\epsilon>0$. Then we have
	\begin{equation}
		\label{2020jun4}
		E[|K^{r_1,r_2}(t,s,x)-K^{r'_1,r'_2}(t,s,x)|^p]\leq C(p,\alpha,\beta)\big(|r_1-r'_1|^{p\gamma}+|r_2-r'_2|^{p\gamma}\big).
	\end{equation}
	Hence
	\begin{equation}
		\label{1221-1}
		\begin{split}
			E\big[|K^{r_1,r_2}(t,s,x)-K^{r'_1,r'_2}(t,s,x)|^\frac{p}{\kappa}\big]&\leq\left(E\big[|K^{r_1,r_2}(t,s,x)-K^{r'_1,r'_2}(t,s,x)|^p\big]\right)^{\frac{1}{\kappa}}\\
			&\leq C(p,\kappa,\alpha,\beta)\big(|r_1-r'_1|^{\frac{p\gamma}{\kappa}}+|r_2-r'_2|^{\frac{p\gamma}{\kappa}}\big).
		\end{split}
	\end{equation}
	Then together with \eqref{1220-7}, the continuity of $K^{r_1,r_2}(t,s,x)$ can be obtained by the standard argument of using Kolmogorov's continuity criterion.

	$(iv)$ For any $t,t'\in \mathbb{R}$, it follows from \eqref{1220-0} and \eqref{1220-4} that
	\begin{equation}
		\begin{split}
		E\big[|\varphi(t)-\varphi(t')|^\frac{p}{\kappa}\big]&\leq \liminf_{s\to -\infty} E\big[|u(t,s,0)-u(t',s,0)|^\frac{p}{\kappa}\big]\\
		&\leq C(p,\kappa,l,\alpha,\beta,M)|t-t'|^{\frac{p}{2\kappa}}.
		\end{split}
	\end{equation}
	By Kolmogorov's continuity criterion, we know that $\varphi(t)$ is continuous with respect to $t$ $P-a.s.$.

	$(v-viii)$ Similarly, Comparing with \eqref{L2-lim K u}, \eqref{1220-7} and \eqref{2020jun4}, we have
	\begin{equation}
		\label{2020jun12}
		\begin{split}
			E\big[|\varphi^{r_1,r_2}(t)-\varphi^{r'_1,r'_2}(t')|^\frac{p}{\kappa}\big]&\leq \liminf_{s\to -\infty} E\big[|K^{r_1,r_2}(t,s,0)-K^{r'_1,r'_2}(t',s,0)|^\frac{p}{\kappa}\big]\\
		&\leq \liminf_{s\to -\infty} C(p,\kappa)\big\{E\big[|K^{r_1,r_2}(t,s,0)-K^{r_1,r_2}(t',s,0)|^\frac{p}{\kappa}\big]\\
		&\ \ \ \ \ \ \ \ \ \ \ \ \ \ \ \ \ \ \ \ \ 
		+E\big[|K^{r_1,r_2}(t',s,0)-K^{r'_1,r'_2}(t',s,0)|^\frac{p}{\kappa}\big]\big\}\\
		&\leq C(p,\kappa,l,\alpha,\beta,M)\big(|r_1-r'_1|^{\frac{p\gamma}{\kappa}}+|r_2-r'_2|^{\frac{p\gamma}{\kappa}}+|t-t'|^{\frac{p}{2\kappa}}\big).
		\end{split}
	\end{equation}
    Along with \eqref{2020jun5}, \eqref{2020jun6}, \eqref{2020jun9} and \eqref{2020jun12}, we conclude that
	\begin{align}
		&E\big[|\varphi(t)\theta_{-t}-\varphi(t')\theta_{-t'}|^\frac{p}{\kappa}\big]\leq C(p,\kappa,l,\alpha,\beta,M)|t-t'|^{\frac{p\gamma}{\kappa}},\\
		&E\big[|\tilde{\varphi}(t,s)-\tilde{\varphi}(t',s')|^\frac{p}{\kappa}\big]\leq C(p,\kappa,l,\alpha,\beta,M)\big(|t-t'|^{\frac{p\gamma}{\kappa}}+|s-s'|^{\frac{p\gamma}{\kappa}}\big),\label{2020jun1*}\\
		&E\big[|\varphi^r(t)-\varphi^{r'}(t')|^\frac{p}{\kappa}\big]\leq C(p,\kappa,l,\alpha,\beta,M)\big(|r-r'|^{\frac{p\gamma}{\kappa}}+|t-t'|^{\frac{p}{2\kappa}}\big).
	\end{align}
	Then we derive $(v),(vi), (vii)$ and $(viii)$ by applying Kolmogorov's continuity criterion respectively.
\end{proof}

\subsection{Existence and uniqueness of quasi-periodic measure}
  First we give the definition of the quasi-periodic probability measure as follows.
  \begin{definition}
  	\label{Definition of quasi-periodic measure}
  	We say a map $\rho: \mathbb{R}\rightarrow \mathcal{P}(\mathbb{R}^d)$ is a quasi-periodic probability measure of periods $\tau_1, \tau_2$ of SDE (\ref{SDE}), where the reciprocals of $\tau_1$ and $\tau_2$ are rationally linearly independent, if $P^*(t,s)\rho_s=\rho_t$ for all $t\geq s$, and there exists $\tilde{\rho}:\mathbb{R}\times\mathbb{R}\rightarrow \mathcal{P}(\mathbb{R}^d)$ with $\tilde{\rho}_{t,t}=\rho_t$ such that
  	\begin{equation}
  	\tilde{\rho}_{t+\tau_1,s}=\tilde{\rho}_{t,s}, \ 
  	\tilde{\rho}_{t,s+\tau_2}=\tilde{\rho}_{t,s},
  	\end{equation}
  	for all $t,s\in\mathbb{R}$.
  \end{definition}
  
\begin{theorem}
	\label{Existence of quasi-periodic measure}
	Assume Conditions \ref{Quasi-periodic condition}, \ref{Quasi-dissipative} and $\alpha>\frac{(p-1)\beta^2}{2}$ for some constant $p\geq 2$. Then there exists a unique quasi-periodic probability measure of periods $\tau_1, \tau_2$ of SDE (\ref{SDE}) in $\mathcal{M}^p$.
\end{theorem}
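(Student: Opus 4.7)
The natural strategy is to transfer the random quasi-periodic structure obtained in Theorem \ref{Existence and uniqueness of quasi-periodic random path} to the level of laws. Let $\varphi$ be the unique uniformly $L^p$-bounded random quasi-periodic path of SDE (\ref{SDE}) given by Theorem \ref{Existence and uniqueness of quasi-periodic random path}, and let $\tilde{\varphi}(t,s)=\varphi^{t,s}(0)$ be as in \eqref{2020jun9}. Define
\begin{equation*}
\rho_t:=\mathcal{L}(\varphi(t)),\qquad \tilde{\rho}_{t,s}:=\mathcal{L}(\tilde{\varphi}(t,s)).
\end{equation*}
I claim that $(\rho,\tilde{\rho})$ witnesses the quasi-periodic measure in $\mathcal{M}^p$.

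For existence, first note that $\rho$ is an entrance measure in $\mathcal{M}^p$. The entrance property $P^*(t,s)\rho_s=\rho_t$ is exactly the computation \eqref{Law of random path adapted} in the proof of Theorem \ref{Existence and uniqueness of entrance measure}; membership in $\mathcal{M}^p$ follows from the uniform $L^p$-bound $\sup_{t\in\mathbb{R}}\|\varphi(t)\|_p<\infty$. Next, I verify the three compatibility identities in Definition \ref{Definition of quasi-periodic measure}. The diagonal identity $\tilde{\rho}_{t,t}=\rho_t$ follows from \eqref{2020jun10}, which gives $\tilde{\varphi}(t,t)=\varphi(t)\circ\theta_{-t}$ $P$-a.s., combined with the measure-preserving property of $\theta_{-t}$ on $(\Omega,\mathcal{F},P)$, so $\varphi(t)\circ\theta_{-t}$ has the same law as $\varphi(t)$. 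The two periodicity identities $\tilde{\rho}_{t+\tau_1,s}=\tilde{\rho}_{t,s}$ and $\tilde{\rho}_{t,s+\tau_2}=\tilde{\rho}_{t,s}$ are immediate from the a.s. equalities $\tilde{\varphi}(t+\tau_1,s)=\tilde{\varphi}(t,s)$ and $\tilde{\varphi}(t,s+\tau_2)=\tilde{\varphi}(t,s)$ in \eqref{2020jun11} (equivalently \eqref{2020jun7}). Passing to laws preserves such a.s. identities, so the claim follows.

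For uniqueness, suppose $\mu\in\mathcal{M}^p$ is another quasi-periodic probability measure of periods $\tau_1,\tau_2$ with associated $\tilde{\mu}$. By Definition \ref{Definition of quasi-periodic measure}, $\mu$ satisfies $P^*(t,s)\mu_s=\mu_t$ for all $t\geq s$, i.e. $\mu$ is an entrance measure of SDE (\ref{SDE}) in $\mathcal{M}^p$. Theorem \ref{Existence and uniqueness of entrance measure} then forces $\mu_t=\rho_t$ for every $t\in\mathbb{R}$, yielding uniqueness of the quasi-periodic measure as a measure-valued function of the single variable $t$.

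There is essentially no technical obstacle here, because the difficult analytic work—constructing the pull-back limit for the re-parameterised equation \eqref{Solution K_r_1,r_2}, proving the identities \eqref{2020jun10}-\eqref{2020jun11}, and establishing uniqueness of the entrance measure via Lemmas \ref{Bounded solution} and \ref{Exponential decay}—has already been carried out in the preceding theorems. The only point requiring a moment's care is the diagonal identity $\tilde{\rho}_{t,t}=\rho_t$, where one must invoke $\theta_{-t}$-invariance of $P$ to identify the laws of $\varphi(t)$ and $\varphi(t)\circ\theta_{-t}$; apart from this, the argument is a direct transfer of the pathwise results of Theorem \ref{Existence and uniqueness of quasi-periodic random path} to distributional statements.
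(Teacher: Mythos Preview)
Your proposal is correct and follows essentially the same route as the paper: define $\rho_t=\mathcal{L}(\varphi(t))$ and $\tilde{\rho}_{t,s}=\mathcal{L}(\tilde{\varphi}(t,s))$, use \eqref{Law of random path adapted}, \eqref{2020jun10}--\eqref{2020jun11} together with the $\theta_{-t}$-invariance of $P$ for existence, and invoke Theorem \ref{Existence and uniqueness of entrance measure} for uniqueness. The only cosmetic differences are the order of the two parts and that the paper additionally records $\tilde{\rho}\in\mathcal{M}^p$, which is not required by Definition \ref{Definition of quasi-periodic measure}.
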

  
  \begin{proof}
  	Uniqueness: Applying the proof of Theorem \ref{Existence and uniqueness of entrance measure}, we know that if there exists a quasi-periodic probability measure with periods $\tau_1, \tau_2$ of SDE (\ref{SDE}) in $\mathcal{M}^p$, it must be the unique entrance measure of SDE (\ref{SDE}) defined by the law of the random path.

  	Existence: Recall equations \eqref{2020jun10} and \eqref{2020jun11} in Theorem \ref{Existence and uniqueness of quasi-periodic random path}, let 
  	\begin{equation}
  	\label{Define quasi-periodic measure by quasi-periodic path}
  		\rho_t=\mathcal{L}(\varphi(t)), \ \tilde{\rho}_{t,s}=\mathcal{L}(\tilde{\varphi}(t,s))
  	\end{equation}
	  be the laws of $\varphi(t)$ and $\tilde{\varphi}(t,s)$ respectively. Since $\varphi$ is the random path of SDE (\ref{SDE}), then by equation (\ref{Law of random path adapted}) we have $P^*(t,s)\rho_s=\rho_t$ for all $t\geq s$. Since $\theta_{-t}$ preserves probability measure $P$, then $\rho_t=\mathcal{L}(\varphi(t))=\mathcal{L}(\tilde{\varphi}(t,t))=\tilde{\rho}_{t,t}$. By the construction of $\tilde{\rho}$, we have 
	  $$\tilde{\rho}_{t+\tau_1,s}=\mathcal{L}(\tilde{\varphi}(t+\tau_1,s))=\mathcal{L}(\tilde{\varphi}(t,s))=\tilde{\rho}_{t,s}$$
  	and
  	$$\tilde{\rho}_{t,s+\tau_2}=\mathcal{L}(\tilde{\varphi}(t,s+\tau_2))=\mathcal{L}(\tilde{\varphi}(t,s))=\tilde{\rho}_{t,s}.$$
  	Also since $\varphi$ is uniformly $L^p$-bounded, then
  	$$\sup_{t\in \mathbb{R}}\int_{\mathbb{R}^d}|x|^p\rho_t(dx)=\sup_{t\in \mathbb{R}}\mathbb{E}[|\varphi(t)|^p]<\infty,$$
	which means $\rho\in \mathcal{M}^p$. Moreover, from Theorem \ref{Existence and uniqueness of random path}, Lemma \ref{Bounded solution}, Lemma \ref{Exponential decay} and the proof of Theorem \ref{Existence and uniqueness of quasi-periodic random path}, we have
	\begin{align*}
	\sup_{t,s\in \mathbb{R}}\int_{\mathbb{R}^d}|x|^p\tilde{\rho}_{t,s}(dx)=\sup_{t,s\in \mathbb{R}}\mathbb{E}[|\varphi^{t,s}(0)|^p]<\infty,
	\end{align*}
	which shows that $\tilde{\rho}\in \mathcal{M}^p$.
  \end{proof}

  \begin{remark}
	  Similar to Remark \ref{1222-*}, the continuity of $\tilde{\varphi}$ and Condition \ref{Quasi k growth} are not needed when we consider the quasi-periodic measure. From Theorem \ref{Existence and uniqueness of quasi-periodic random path} and its proof, we know that the estimates for $p=2$ in Section 3.1 is also adequate.
  \end{remark}
  
  \begin{example}[Ornstein-Uhlenbeck equation]
  We include the following example with a number of reasons. First, O-U process is one of the simplest stochastic process
  that one would analyse for new concepts.  Second, it is instructive and does illustrate clearly 
  the idea of random quasi-periodicity and two kinds of 
  formulations as well as their relation. Third, the formulae for its random quasi-periodic path and quasi-periodic measure can be written down explicitly. 
  Last, but not least, this equation is relevant in various different applications e.g. modelling energy consumptions or temperature variants with two obvious daily and seasonal periodicities.   
  	 
	 The Ornstein-Uhlenbeck process with mean reversion of single-period was used in modelling electricity prices (\cite{BKM07},\cite{LS02}), daily temperature 
(\cite{BB07}), biological neurouns (\cite{IDL14}) etc. The quasi-periodic O-U process we introduce here allows a feature of multiple periods which is natural
in many real world situations e.g energy consumptions, temperature, business cycles, economics cycles. While it is not the purpose 
of this paper to study these interesting applied problems in their specific contexts, our work in this paper provides a mathematical
theory of random quasi-periodicity for this purpose.
	 
	 Here we consider the following mean reversion multidimensional Ornstein-Uhlenbeck equation on $\mathbb{R}^d$
  	\begin{equation}
  	\label{O-U equation}
  	dX_t=(S(t)-AX_t)dt+\sigma(t)dW_t
  	\end{equation}
  	where $S(t), \sigma(t)$ are deterministic quasi-periodic functions with periods $\tau_1,\tau_2$ and $A\in S_d$ with $A>0$, which means that $A$ is a symmetrical matrix with positive eigenvalues $\{\lambda_n\}_{n=1}^{d}$. 
	 The analysis is given as follows.
	
	Applying It$\hat{o}$'s formula to $e^{tA}X_t$, we have
  	\begin{equation}
  	X_t=e^{-(t-s)A}X_s+\int_{s}^{t}e^{-(t-r)A}S(r)dr+\int_{s}^{t}e^{-(t-r)A}\sigma(r)dW_r \quad t\geq s.
  	\end{equation}
  	Let 
  	\begin{eqnarray}
	\label{2019aug4}
	\varphi(t):=\int_{-\infty}^{t}e^{-(t-r)A}S(r)dr+\int_{-\infty}^{t}e^{-(t-r)A}\sigma(r)dW_r.
	\end{eqnarray}
  	Then we have 
  	\begin{equation}
  	\varphi(t)=e^{-(t-s)A}\varphi(s)+\int_{s}^{t}e^{-(t-r)A}S(r)dr+\int_{s}^{t}e^{-(t-r)A}\sigma(r)dW_r,
  	\end{equation}
  	which means that $\varphi$ is a random path of SDE (\ref{O-U equation}). Next we will show that $\varphi$ is also a random quasi-periodic path. We first rewrite $\varphi(t)$ by 
  	\begin{equation}
  	\begin{split}
  	\varphi(t, \omega)
  	&=\int_{-\infty}^{t}e^{-(t-r)A}S(r)dr+\left[\int_{-\infty}^{t}e^{-(t-r)A}\sigma(r)dW_r\right](\omega)\\
  	&=\int_{-\infty}^{0}e^{rA}S(r+t)dr+\left[\int_{-\infty}^{0}e^{rA}\sigma(r+t)dW_r\right](\theta_t\omega).
  	\end{split}
  	\end{equation}
  	Since $S,\sigma$ are quasi-periodic functions with periods $\tau_1,\tau_2$, then there exist $\tilde{S},\tilde{\sigma}$ such that $S(t)=\tilde{S}(t,t)$ and $\sigma(t)=\tilde{\sigma}(t,t)$ and 
  	\begin{equation}
  	\begin{cases}
  	\tilde{S}(t+\tau_1,s)=\tilde{S}(t,s)=\tilde{S}(t,s+\tau_2)\\
  	\tilde{\sigma}(t+\tau_1,s)=\tilde{\sigma}(t,s)=\tilde{\sigma}(t,s+\tau_2).
  	\end{cases}
  	\end{equation}
  	Then we have
  	\begin{equation*}
  	\begin{split}
  	\varphi(t, \omega)
  	&=\int_{-\infty}^{0}e^{rA}\tilde{S}(r+t,r+t)dr+\left[\int_{-\infty}^{0}e^{rA}\tilde{\sigma}(r+t,r+t)dW_r\right](\theta_t\omega).
  	\end{split}
  	\end{equation*}
  	Let
  	\begin{eqnarray}
	\label{2019aug5}
	\tilde{\varphi}(t,s,\omega)=\int_{-\infty}^{0}e^{rA}\tilde{S}(r+t,r+s)dr+\left[\int_{-\infty}^{0}e^{rA}\tilde{\sigma}(r+t,r+s)dW_r\right](\omega).
	\end{eqnarray}
  	Then we have $\varphi(t,\theta_{-t}\omega)=\tilde{\varphi}(t,t,\omega)$ and
  	\begin{equation}
  	\tilde{\varphi}(t+\tau_1,s,\omega)=\tilde{\varphi}(t,s,\omega), \ 
  	\tilde{\varphi}(t,s+\tau_2,\omega)=\tilde{\varphi}(t,s,\omega),
  	\end{equation}
  	which shows that $\varphi$ is a random quasi-periodic path of SDE (\ref{O-U equation}) with periods $\tau_1,\tau_2$.
  	
  	Let $\rho_t=\mathcal{L}(\varphi(t))$. By Theorem \ref{Existence of quasi-periodic measure}, we know that $\rho_t$ is the unique quasi-periodic probability measure with periods $\tau_1,\tau_2$ of SDE (\ref{O-U equation}). Moreover, from (\ref{2019aug4}), 
  	we know that
  	$$\rho_t(\cdot)=\mathcal{N}\left(\int_{-\infty}^{t}e^{-(t-r)A}S(r)dr, \int_{-\infty}^{t}e^{-(t-r)A}\sigma(r)\sigma(r)^Te^{-(t-r)A}dr\right)(\cdot),$$
  	where $\mathcal{N}$ is the multivariate normal distribution. 
	Let $\tilde \rho_{t,s}=\mathcal{L}(\tilde \varphi(t,s))$. Then from (\ref{2019aug5}), 
  	we know that
  	$$\tilde \rho_{t,s}(\cdot)=\mathcal{N}\left(\int_{-\infty}^{0}e^{rA}\tilde S(r+t,r+s)dr, \int_{-\infty}^{0}e^{rA}(\tilde \sigma\tilde 
	\sigma^T)(r+t,r+s)e^{rA}dr\right)(\cdot).$$
It is obvious that $\rho_t=\tilde \rho_{t,t}$. 

In Subsection \ref{2019aug6}, we will develop a way to lift a quasi-periodic stochastic flow to the cylinder $[0,\tau_1)\times [0,\tau_2)\times {\mathbb R}^d$ and prove  $\tilde \mu_{t,s}=\delta_t\times\delta_s\times \tilde \rho_{t,s}$ is a
quasi-periodic measure. This setup will enable us to prove that the average ${1\over {\tau_1\tau_2}}\int _0^{\tau_1}\int_0^{\tau_2}\tilde \mu_{t,s}dtds$ is an ergodic invariant measure on the cylinder. Our result also implies that for this particular case, it is the unique ergodic invariant measure for the lifted quasi-periodic Ornstein-Uhlenbeck process.
  \end{example}

\subsection{The lift and invariant measure}\label{2019aug6}

In Section \ref{Section of Quasi-periodic path}, we have the existence and uniqueness of random quasi-periodic path, and in this case, we will lift the semi-flow $u$ and obtain an invariant measure.
Consider the cylinder $\tilde{\mathbb X}=[0, \tau_1) \times [0, \tau_2)\times\mathbb{R}^d$ with the following metric
$$d(\tilde{x}, \tilde{y})=d_1(t_1,s_1)+d_2(t_2,s_2)+|x-y|,  \text{ for all } \tilde{x}=(t_1,t_2,x), \tilde{y}=(s_1,s_2,y) \in \tilde{\mathbb X},$$
where $d_1,d_2$ are the metrics on $[0,\tau_1), [0,\tau_2)$ defined by
$$d_i(t_i,s_i)=\min(|t_i-s_i|, \tau_i-|t_i-s_i|), \text{ for all } t_i,s_i\in [0,\tau_i), i=1,2.$$
Denote by $\mathcal{B}(\tilde{\mathbb X})$ the Borel measurable set on $\tilde{\mathbb X}$ deduced by metric $d$. Then we have the following lemma. Note the perfection of $\ K^{s_1,s_2}(t,0,x,\cdot)$ is needed but that of $\tilde{\varphi}$ is not needed.

\begin{lemma}
	\label{Lemma of cocycle of the lift semiflow}
	Assume Conditions \ref{Quasi-periodic condition}, \ref{Quasi-dissipative} hold. We lift the semi-flow $u: \triangle\times \mathbb{R}^d\times \Omega\rightarrow \mathbb{R}^d$ to a random dynamical system on a cylinder $\tilde{\mathbb X}=[0, \tau_1) \times [0, \tau_2)\times\mathbb{R}^d$ by the following:
	$$\tilde{\Phi}(t,\omega)(s_1,s_2,x)=(t+s_1\mod \tau_1,\ t+s_2 \mod\tau_2,\ K^{s_1,s_2}(t,0,x,\omega)),$$
	where $K^{r_1,r_2}$ is the solution of (\ref{Solution K_r_1,r_2}). Then $\tilde{\Phi}: \mathbb{R}^+\times \tilde{\mathbb X}\times\Omega\rightarrow \tilde{\mathbb X}$ is a cocycle on $\tilde{\mathbb X}$ over the metric dynamical system $(\Omega, \mathcal{F}, P, (\theta_t)_{t\in \mathbb{R}})$. If we further assume that Condition \ref{Quasi k growth} holds and $\alpha>\frac{(p-1)\beta^2}{2}$ for some $p \geq (10+2d)\kappa\vee\frac{(5+d)\kappa}{\gamma}$, then $\tilde{\Phi}$ is a perfect cocycle on $\tilde{\mathbb X}$.
	
	Moreover, assume $\varphi: \mathbb{R}\times \Omega\rightarrow \mathbb{R}^d$ is a random path of the semi-flow u. Then $\tilde{Y}: \mathbb{R} \times \Omega\rightarrow \tilde{\mathbb X}$ defined by 
	$$\tilde{Y}(s,\omega)=(s\mod \tau_1,\ s\mod\tau_2,\ \varphi(s,\omega))$$
	is a random path of the cocycle $\tilde{\Phi}$ on $\tilde{\mathbb X}$.
\end{lemma}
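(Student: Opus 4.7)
The plan is to split the proof into three tasks: verifying the cocycle identity for $\tilde\Phi$, upgrading it to a perfect cocycle under the stronger moment hypothesis, and identifying $\tilde Y$ as a random path of $\tilde\Phi$. All three rest on two algebraic tools already at hand: the time-invariance identity (\ref{2019aug3}) and the ordinary semi-flow property of $K^{r_1,r_2}(\cdot,\cdot,\cdot,\omega)$ for each fixed pair of parameters $(r_1,r_2)$, which holds because (\ref{Solution K_r_1,r_2}) is a standard It\^o SDE in $(t,s,x)$.

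For the cocycle identity, I would fix $t,r\ge 0$ and $(s_1,s_2,x)\in\tilde{\mathbb X}$. The first two coordinates of $\tilde\Phi(t+r,\omega)(s_1,s_2,x)$ and of $\tilde\Phi(t,\theta_r\omega)\tilde\Phi(r,\omega)(s_1,s_2,x)$ both equal $(t+r+s_i\bmod\tau_i)_{i=1,2}$ by construction, using also $K^{r_1+\tau_1,r_2}=K^{r_1,r_2}=K^{r_1,r_2+\tau_2}$ to absorb the mods in the parameter of the third coordinate. The task therefore reduces to
\begin{equation*}
K^{s_1,s_2}(t+r,0,x,\omega)\;=\;K^{s_1+r,s_2+r}\bigl(t,0,K^{s_1,s_2}(r,0,x,\omega),\theta_r\omega\bigr),\quad P\text{-a.s.}
\end{equation*}
I would first apply the semi-flow property of $K^{s_1,s_2}(\cdot,\cdot,\cdot,\omega)$ to rewrite the LHS as $K^{s_1,s_2}(t+r,r,K^{s_1,s_2}(r,0,x,\omega),\omega)$, and then invoke (\ref{2019aug3}) (with $\omega$ replaced by $\theta_r\omega$) to turn $K^{s_1,s_2}(t+r,r,\cdot,\omega)$ into $K^{s_1+r,s_2+r}(t,0,\cdot,\theta_r\omega)$, giving the RHS.

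For perfection, the identity above holds only $P$-a.s.~for each fixed parameter tuple, with a null set that may depend on the parameters; the standard fix is to check the identity on a countable dense parameter set (rationals for $t,r$, a countable dense subset of the cylinder for $(s_1,s_2,x)$) and then extend by continuity. Lemma \ref{Continuous property of K}(iii) supplies $P$-a.s.~joint continuity of $(r_1,r_2,t,s,x)\mapsto K^{r_1,r_2}(t,s,x,\omega)$ once $p\gamma\wedge(p/2)\ge (4+d)\kappa$, but the composition on the RHS feeds the inner object $K^{s_1,s_2}(r,0,x,\omega)$ into the spatial slot of an outer $K^{s_1+r,s_2+r}(t,0,\cdot,\theta_r\omega)$ evaluated at the shifted sample. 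Absorbing the extra shift variable $r$ and the additional nesting is why the lemma's hypothesis is inflated to $p\ge(10+2d)\kappa\vee\frac{(5+d)\kappa}{\gamma}$. Measure-preservation of $\theta_r$ lets one transfer a.s.~statements in $\omega$ to a.s.~statements in $\theta_r\omega$; once joint continuity of both sides in $(t,r,s_1,s_2,x)$ is secured on a single full-measure event, the extension from the dense skeleton to the whole parameter range proceeds exactly as in the perfection argument of Theorem \ref{Existence and uniqueness of random path}. This bookkeeping of null sets against the $\theta_r$-shift is the main obstacle; the algebraic content is otherwise immediate.

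Finally, I view $\tilde Y$ as a random path of the semi-flow $\tilde u(t,s,\cdot,\omega):=\tilde\Phi(t-s,\theta_s\omega)$ associated with the cocycle. For $t\ge s$, the first two coordinates of $\tilde u(t,s,\tilde Y(s,\omega),\omega)$ give $(t\bmod\tau_i)_{i=1,2}$ trivially. For the third coordinate, periodicity of $K$ in $(r_1,r_2)$ allows $s\bmod\tau_i$ to be replaced by $s$ in the parameter, so it suffices to show $K^{s,s}(t-s,0,\varphi(s,\omega),\theta_s\omega)=\varphi(t,\omega)$. Specializing (\ref{2019aug3}) with $r_1=r_2=s$ and $r=-s$ yields $K^{s,s}(t-s,0,x,\theta_s\omega)=K^{0,0}(t,s,x,\omega)$, and $K^{0,0}=u$ by comparing (\ref{SDE}) with (\ref{Solution K_r_1,r_2}) at the zero parameter. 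Since $\varphi$ is a random path of $u$, the right-hand side equals $u(t,s,\varphi(s,\omega),\omega)=\varphi(t,\omega)$, which closes the verification.
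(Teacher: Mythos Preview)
Your proposal is correct and follows essentially the same route as the paper. For the cocycle identity you combine the semi-flow property of $K^{r_1,r_2}$ with the shift identity (\ref{2019aug3}) and the $(\tau_1,\tau_2)$-periodicity in the parameters, just as the paper does (the paper applies (\ref{2019aug3}) first and the semi-flow property second, whereas you reverse the order, but the algebra is identical); for perfection the paper likewise establishes a six-parameter Kolmogorov estimate for $K^{r+r_1,r+r_2}(t,s,x,\theta_r\cdot)$ to obtain joint continuity of the shifted composition, and then runs the countable-skeleton plus continuity argument you describe; for $\tilde Y$ the paper goes through the identity $K^{s,s}=u^s$ and $u^s(t,0,\cdot,\theta_s\omega)=u(t+s,s,\cdot,\omega)$, which is equivalent to your direct use of (\ref{2019aug3}) with $r_1=r_2=s$, $r=-s$ followed by $K^{0,0}=u$.
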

\begin{proof}
	We first prove that $\tilde{\Phi}$ is a cocycle on $\tilde{\mathbb X}$. Note $K^{r_1,r_2}$ is periodic in $r_1,r_2$ with periods $\tau_1,\tau_2$. It follows that for any $(s_1,s_2,x)\in \tilde{\mathbb X}, t,s\in \mathbb{R}^+$, we have
	\begin{eqnarray*}
	&&\tilde{\Phi}(t, \theta_s\omega)\circ\tilde{\Phi}(s,\omega)(s_1,s_2,x)\\
	&=&\tilde{\Phi}(t,\theta_s\omega)(s+s_1\mod\tau_1,\ s+s_2\mod\tau_2,\ K^{s_1,s_2}(s,0,x,\omega))\\
	&=&(t+s+s_1\mod\tau_1,\ t+s+s_2\mod\tau_2,\ K^{s+s_1, s+s_2}(t,0,K^{s_1,s_2}(s,0,x,\omega), \theta_s\omega)).
	\end{eqnarray*}
	Now we compute the $K^{s+s_1, s+s_2}(t,0,K^{s_1,s_2}(s,0,x,\omega), \theta_s\omega)$ term. By Remark \ref{Solution K shift time in r}, we know that equation \eqref{2019aug3} holds $P-a.s.$, i.e. for all $r_1,r_2,r\in \mathbb{R}$, $t\geq s$,
	\begin{equation*}
	K^{r_1,r_2}(t+r,s+r,x,\omega)=K^{r+r_1,r+r_2}(t,s,x,\theta_r\omega), \ P-a.e. \text{ on }\omega.
	\end{equation*}
	Then we have for all $s_1,s_2\in \mathbb{R}, t,s\in \mathbb{R}^+$
	\begin{align*}
		K^{s+s_1, s+s_2}(t,0,K^{s_1,s_2}(s,0,x,\omega), \theta_s\omega)&=K^{s_1,s_2}(t+s,s,K^{s_1,s_2}(s,0,\omega),\omega)\\
		&=K^{s_1,s_2}(t+s,0,\omega), \ P-a.e. \text{ on } \omega,
	\end{align*}
	i.e. the set $N^{s_1,s_2}_{s,t}:=\{\omega| K^{s+s_1, s+s_2}(t,0,K^{s_1,s_2}(s,0,x,\omega), \theta_s\omega)\neq K^{s_1,s_2}(t+s,0,\omega)\}$ is a null set. Then for any fixed $t\geq s, (s_1,s_2,x)\in \tilde{\mathbb{X}}$, 
	\begin{equation*}
		\begin{split}
		&\tilde{\Phi}(t, \theta_s\omega)\circ\tilde{\Phi}(s,\omega)(s_1,s_2,x)\\
		&=(t+s+s_1\mod\tau_1,t+s+s_2\mod\tau_2, K^{s+s_1, s+s_2}(t,0,K^{s_1,s_2}(s,0,x,\omega), \theta_s\omega))\\
		&=(t+s+s_1\mod\tau_1,t+s+s_2\mod\tau_2, K^{s_1,s_2}(t+s, 0, x, \omega))\\
		&=\tilde{\Phi}(t+s,\omega)(s_1,s_2,x), P-a.e. \ \omega,
		\end{split}
	\end{equation*}
	which means that $\tilde{\Phi}$ is a cocycle on $\tilde{\mathbb{X}}$.

	If Condition \ref{Quasi k growth} holds and $\alpha>\frac{(p-1)\beta^2}{2}$ for some $p \geq (10+2d)\kappa\vee\frac{(5+d)\kappa}{\gamma}$, 
	from \eqref{2019aug3}, \eqref{1220-7} and \eqref{1221-1} in Lemma \ref{Continuous property of K}, we know that there exists $C=C(p, \kappa, l,\alpha, \beta, M)(1+|x|^p+|x'|^p)$ such that
	\begin{equation*}
		\begin{split}
			&E\left[\left|K^{r+r_1,r+r_2}(t,s,x)\circ\theta_r-K^{r'+r_1',r'+r_2'}(t',s',x')\circ\theta_{r'}\right|^{\frac{p}{\kappa}}\right]\\
			&\leq C\big(|r_1-r'_1|^{\frac{p\gamma}{\kappa}}+|r_2-r'_2|^{\frac{p\gamma}{\kappa}}+|r-r'|^{\frac{p}{2\kappa}}+|t-t'|^{\frac{p}{2\kappa}}+|s-s'|^{\frac{p}{2\kappa}}+|x-x'|^{\frac{p}{\kappa}}\big).
		\end{split}
	\end{equation*}
	Then by Kolmogorov's continuity criterion, $K^{r+r_1,r+r_2}(t,s,x,\theta_r\cdot)$ is continuous with respect to $(r,r_1,r_2,t,s,x)$ $P-a.s.$. By $(iii)$ in Lemma \ref{Continuous property of K}, we also know that $K^{r_1,r_2}(t,s,x)$ is continuous with respect to $(r_1,r_2,t,s,x)$ $P-a.s.$.
	Denote 
	$$N^1:=\{\omega| K: (r_1,r_2,t,s,x)\mapsto K^{r_1,r_2}(t,s,x,\omega) \text{ is not continuous}\}$$
	$$N^2:=\{\omega| K: (r,r_1,r_2,t,s,x)\mapsto K^{r+r_1,r+r_2}(t,s,x,\theta_{r}\omega) \text{ is not continuous}\}$$
	and
	$$N_K:=\bigcup_{s_1,s_2\in Q, t, s\in Q^+}N^{s_1,s_2}_{s,t}\bigcup N^1 \bigcup N^2.$$
	Then $P(N^1)=P(N^2)=0$ and hence $P(N_K)=0$. Fix $\omega\in N_K^c$, for any $s_1,s_2\in \mathbb{R}, t,s\in \mathbb{R}^+$, we choose $\{s_1^n,s_2^n,t^n,s^n\}_{n\geq 1}$ such that $s_1^n,s_2^n\in Q, t^n, s^n\in Q^+$ and $s_1^n\to s_1, s_2^n\to s_2, t^n\to t, s^n\to s$, then we have
	\begin{equation*}
	\begin{split}
	&\tilde{\Phi}(t, \theta_s\omega)\circ\tilde{\Phi}(s,\omega)(s_1,s_2,x)\\
	=&(t+s+s_1\mod\tau_1,t+s+s_2\mod\tau_2, K^{s+s_1, s+s_2}(t,0,K^{s_1,s_2}(s,0,x,\omega), \theta_s\omega))\\
	=&(t+s+s_1\mod\tau_1,t+s+s_2\mod\tau_2, \lim_{n\rightarrow \infty}K^{s^n+s^n_1, s^n+s^n_2}(t^n,0,K^{s^n_1,s^n_2}(s^n,0,x,\omega), \theta_{s^n}\omega))\\
	=&(t+s+s_1\mod\tau_1,t+s+s_2\mod\tau_2, \lim_{n\rightarrow \infty}K^{s^n_1, s^n_2}(t^n+s^n,0,x,\omega))\\
	=&(t+s+s_1\mod\tau_1,t+s+s_2\mod\tau_2, K^{s_1,s_2}(t+s, 0, x, \omega))\\
	=&\tilde{\Phi}(t+s,\omega)(s_1,s_2,x),
	\end{split}
	\end{equation*}
	which implies the perfect cocycle property of $\tilde{\Phi}$.
	
	Next, assume $\varphi$ is a random path of the semi-flow $u$ and $\tilde{Y}(s,\omega)=(s\mod \tau_1, s\mod\tau_2, \varphi(s,\omega))$. Denote $N_{\varphi}:=\{\omega| u(t,s,\varphi(s,\omega),\omega)\neq \varphi(t,\omega) \text{ for all } t\geq s\}$, then $P(N_{\varphi})=0$. Fix $\omega\in N_{\varphi}^c\cap N_K^c$,
	\begin{equation*}
	\begin{split}
	\tilde{\Phi}(t,\theta_s\omega)\tilde{Y}(s,\omega)
	    =&(t+s\mod \tau_1, t+s\mod\tau_2, K^{s, s}(t, 0, \varphi(s, \omega), \theta_s\omega))\\
	    =&(t+s\mod \tau_1, t+s\mod\tau_2, u^s(t, 0, \varphi(s, \omega), \theta_s\omega))\\
	    =&(t+s\mod \tau_1, t+s\mod\tau_2, u(t+s, s, \varphi(s, \omega), \theta_{-s}\theta_s\omega))\\
	    =&(t+s\mod \tau_1, t+s\mod\tau_2, \varphi(t+s,\omega))\\
	    =&\tilde{Y}(t+s,\omega),
	\end{split}
	\end{equation*}
	which means $\tilde{Y}$ is a random path of the cocycle $\tilde{\Phi}$ on $\tilde{\mathbb X}$.
\end{proof}

Consider the Markovian transition $\tilde{P}:\mathbb{R}^+\times \tilde{\mathbb X}\times \mathcal{B}(\tilde{\mathbb X})\rightarrow [0,1]$ generated by the cocycle $\tilde{\Phi}$, i.e.,
$$\tilde{P}(t, (s_1,s_2,x), \tilde{\Gamma})=P(\omega: \tilde{\Phi}(t,\omega)(s_1,s_2,x)\in \tilde{\Gamma}),$$
for all $t\in \mathbb{R}^+, (s_1,s_2,x)\in \tilde{\mathbb X}, \tilde{\Gamma}\in \mathcal{B}(\tilde{\mathbb X})$. Similarly, for any $\tilde{\mu}\in \mathcal{P}(\tilde{\mathbb X})$, we define
$$\tilde{P}^*_t\tilde{\mu}(\tilde{\Gamma})=\int_{\tilde{\mathbb X}}\tilde{P}(t,(s_1,s_2,x),\tilde{\Gamma})\tilde{\mu}(ds_1\times ds_2\times dx).$$
Then we have the following theorem.
\begin{theorem}
	If $\rho: \mathbb{R}\rightarrow \mathcal{P}(\mathbb{R}^d)$ is the entrance measure of semi-group $P^*$, i.e. $P^*(t,s)\rho_s=\rho_t$, then $\tilde{\mu}: \mathbb{R}\rightarrow \mathcal{P}(\tilde{\mathbb X})$ defined by 
	$$\tilde{\mu}_t=\delta_{t\mod \tau_1}\times \delta_{t\mod \tau_2}\times \rho_t$$
	is an entrance measure of semi-group $\tilde{P}^*$, i.e., 
	$$\tilde{P}^*_t\tilde{\mu}_s=\tilde{\mu}_{t+s}.$$
	Moreover, $\tilde{\mu}$ is also a quasi-periodic measure.
\end{theorem}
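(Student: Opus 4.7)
The plan is to unpack both claims by pushing $\tilde{\mu}_s$ through the definition of $\tilde{P}^*$ and reducing everything back to the original semi-flow $u$. First I would observe that because $\tilde{\mu}_s=\delta_{s\mod\tau_1}\times\delta_{s\mod\tau_2}\times\rho_s$ is a product with Dirac factors in the first two coordinates, for any $\tilde{\Gamma}\in\mathcal{B}(\tilde{\mathbb{X}})$,
\[
\tilde{P}^*_t\tilde{\mu}_s(\tilde{\Gamma})=\int_{\mathbb{R}^d}\tilde{P}\bigl(t,(s\mod\tau_1,s\mod\tau_2,x),\tilde{\Gamma}\bigr)\rho_s(dx).
\]
Next, using the $(\tau_1,\tau_2)$-periodicity of $K^{r_1,r_2}$ in $(r_1,r_2)$ (established in the proof of Theorem \ref{Existence and uniqueness of quasi-periodic random path}) together with the identity $K^{r,r}(t,0,x,\omega)=u^r(t,0,x,\omega)=u(t+r,r,x,\theta_{-r}\omega)$ from \eqref{2019aug1} and the definition of $u^r$, I would rewrite
\[
K^{s\mod\tau_1,\, s\mod\tau_2}(t,0,x,\omega)=K^{s,s}(t,0,x,\omega)=u(t+s,s,x,\theta_{-s}\omega),\quad P\text{-a.s.}
\]

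Then, since $\theta_{-s}$ preserves $P$, for each fixed $x$ a change of variables gives
\[
P\bigl(((t+s)\mod\tau_1,(t+s)\mod\tau_2,K^{s,s}(t,0,x,\cdot))\in\tilde{\Gamma}\bigr)=P\bigl(((t+s)\mod\tau_1,(t+s)\mod\tau_2,X^{s,x}_{t+s})\in\tilde{\Gamma}\bigr).
\]
To finish the entrance identity $\tilde{P}^*_t\tilde{\mu}_s=\tilde{\mu}_{t+s}$, I would test both measures against the $\pi$-system of rectangles $\tilde{\Gamma}=A_1\times A_2\times\Gamma$ with $A_i\in\mathcal{B}([0,\tau_i))$ for $i=1,2$ and $\Gamma\in\mathcal{B}(\mathbb{R}^d)$: the first two factors pull out as common indicator terms $\mathbf{1}_{A_i}((t+s)\mod\tau_i)$ on both sides, while what remains on the left is $P^*(t+s,s)\rho_s(\Gamma)=\rho_{t+s}(\Gamma)$ by the entrance property of $\rho$, matching $\tilde{\mu}_{t+s}(A_1\times A_2\times\Gamma)$ on the right. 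A Dynkin $\pi$-$\lambda$ argument then extends the equality to all Borel sets of $\tilde{\mathbb{X}}$.

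For the quasi-periodicity of $\tilde{\mu}$, I would use the two-parameter quasi-periodic measure $\tilde{\rho}_{t,s}$ provided by Theorem \ref{Existence of quasi-periodic measure} and set
\[
\tilde{\tilde{\mu}}_{t,s}:=\delta_{t\mod\tau_1}\times\delta_{s\mod\tau_2}\times\tilde{\rho}_{t,s}.
\]
Since $\tilde{\rho}_{t,t}=\rho_t$ one has $\tilde{\tilde{\mu}}_{t,t}=\tilde{\mu}_t$, and the required periodicities $\tilde{\tilde{\mu}}_{t+\tau_1,s}=\tilde{\tilde{\mu}}_{t,s}=\tilde{\tilde{\mu}}_{t,s+\tau_2}$ follow immediately from those of $\tilde{\rho}$ combined with $(t+\tau_1)\mod\tau_1=t\mod\tau_1$ and $(s+\tau_2)\mod\tau_2=s\mod\tau_2$. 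I expect no serious obstacle here; the only minor bookkeeping is that the null sets arising in \eqref{2019aug1} and in the identity $K^{r_1+\tau_1,r_2}=K^{r_1,r_2}=K^{r_1,r_2+\tau_2}$ depend on the other parameters, but this is harmless because each such identity is invoked inside a single probability at a fixed $(t,s,x)$.
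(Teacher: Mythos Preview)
Your proposal is correct and follows essentially the same route as the paper: reduce the $\tilde{\mathbb X}$-integral to an $\mathbb{R}^d$-integral via the Dirac factors, identify $K^{s\bmod\tau_1,\,s\bmod\tau_2}(t,0,x,\cdot)$ with $u(t+s,s,x,\theta_{-s}\cdot)$ using periodicity and \eqref{2019aug1}, apply the measure-preserving property of $\theta_{-s}$, and invoke the entrance property of $\rho$; the quasi-periodicity part via $\delta_{t\bmod\tau_1}\times\delta_{s\bmod\tau_2}\times\tilde{\rho}_{t,s}$ is exactly the paper's construction $\hat{\mu}_{s_1,s_2}$. The only minor difference is that the paper works directly with an arbitrary $\tilde{\Gamma}\in\mathcal{B}(\tilde{\mathbb X})$ by introducing its section $\tilde{\Gamma}_{t+s}=\{x:(t+s\bmod\tau_1,\,t+s\bmod\tau_2,x)\in\tilde{\Gamma}\}$, which avoids your rectangle-plus-$\pi$-$\lambda$ extension step.
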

\begin{proof}
	For any $\tilde{\Gamma}\in \mathcal{B}(\tilde{\mathbb X})$, let $\tilde{\Gamma}_s:=\{x\in \mathbb{R}^d| (s\mod \tau_1, s\mod \tau_2,x)\in \tilde{\Gamma}\}$. Then we have
	\begin{equation*}
	\begin{split}
	\tilde{P}^*_t\tilde{\mu}_s(\tilde{\Gamma})
	&=\int_{\tilde{\mathbb X}}\tilde{P}(t, (s_1,s_2,x), \tilde{\Gamma})\tilde{\mu}_s(ds_1\times ds_2\times dx)\\
	&=\int_{\mathbb{R}^d}\tilde{P}(t, (s\mod \tau_1,\ s\mod \tau_2,x), \tilde{\Gamma})\rho_s(dx)\\
	&=\int_{\mathbb{R}^d}P(\omega: \tilde{\Phi}(t,\omega)(s\mod \tau_1,\ s\mod \tau_2,x)\in \tilde{\Gamma})\rho_s(dx)\\
	&=\int_{\mathbb{R}^d}P(\omega: (t+s\mod \tau_1,\ t+s\mod \tau_2, u^s(t,0,x,\omega))\in \tilde{\Gamma})\rho_s(dx)\\
	&=\int_{\mathbb{R}^d}P(\omega: u(t+s,s,x,\theta_{-s}\omega)\in \tilde{\Gamma}_{t+s})\rho_s(dx)\\
	&=\int_{\mathbb{R}^d}P(\omega: u(t+s,s,x,\omega)\in \tilde{\Gamma}_{t+s})\rho_s(dx)\\
	&=\int_{\mathbb{R}^d}P(t+s,s,x, \tilde{\Gamma}_{t+s})\rho_s(dx)\\
	&=P^*(t+s,s)\rho_s(\tilde{\Gamma}_{t+s})\\
	&=\rho_{t+s}(\tilde{\Gamma}_{t+s})=\tilde{\mu}_{t+s}(\tilde{\Gamma}).
	\end{split}
	\end{equation*}
	Moreover, let
	\begin{equation}
	\label{Quasi-periodic measure after lift}
		\hat{\mu}_{s_1,s_2}=\delta_{s_1\mod\tau_1}\times\delta_{s_2\mod\tau_2}\times\tilde{\rho}_{s_1,s_2},
	\end{equation}
	we know that $\tilde{\mu}_s=\hat{\mu}_{s_1,s_2}$ and
	\begin{equation}
	\hat{\mu}_{s_1+\tau_1,s_2}=\hat{\mu}_{s_1,s_2}, \ 
	\hat{\mu}_{s_1,s_2+\tau_2}=\hat{\mu}_{s_1,s_2},
	\end{equation}
	which completes our proof.
\end{proof}

For the above entrance measure $\tilde{\mu}$, set
$$\bar{\tilde{\mu}}_T:=\frac{1}{T}\int_{0}^{T}\tilde{\mu}_sds$$
and
\begin{equation}
 \label{Tight measure set}
	\mathcal{M}:=\{\bar{\tilde{\mu}}_T: T\in \mathbb{R}^+\}.
\end{equation}

We have the following lemma.
\begin{lemma}
	\label{Lemma of tight measure set}
	Assume Conditions \ref{Quasi-periodic condition}, \ref{Quasi-dissipative} and $\alpha>\frac{\beta^2}{2}$. Then $\mathcal{M}$ is tight, and hence is weakly compact in $\mathcal{P}(\tilde{\mathbb X})$.
\end{lemma}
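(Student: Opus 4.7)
The plan is to reduce tightness of $\mathcal{M}$ to uniform $L^2$-boundedness of the third marginal and compactness of the first two marginals.

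First I would observe that the first two factors of $\tilde{\mathbb X}$ are already compact under the given metric. Indeed, on $[0,\tau_i)$ the metric $d_i(t_i,s_i)=\min(|t_i-s_i|,\tau_i-|t_i-s_i|)$ is nothing but the arc-length metric on $\mathbb{R}/\tau_i\mathbb{Z}$, so $([0,\tau_i),d_i)$ is isometric to a circle of circumference $\tau_i$ and hence compact. Consequently $[0,\tau_1)\times[0,\tau_2)$ is compact and only the $\mathbb{R}^d$ factor can cause mass to escape.

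Next I would control the $\mathbb{R}^d$-marginal uniformly in $s$. Under Condition~\ref{Quasi-dissipative} together with $\alpha>\beta^2/2$ (i.e.\ the case $p=2$ of Theorem~\ref{Existence and uniqueness of random path}), we have the uniformly $L^2$-bounded random path $\varphi$ of the SDE, so that, with $\rho_s=\mathcal{L}(\varphi(s))$,
\begin{equation*}
M_\rho:=\sup_{s\in\mathbb{R}}\int_{\mathbb{R}^d}|x|^2\,\rho_s(dx)=\sup_{s\in\mathbb{R}}\mathbb{E}|\varphi(s)|^2<\infty.
\end{equation*}
By Chebyshev's inequality, for every $R>0$ and every $s\in\mathbb{R}$, $\rho_s(\{|x|>R\})\leq M_\rho/R^2$.

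Now fix $\varepsilon>0$ and choose $R=R(\varepsilon)$ with $M_\rho/R^2<\varepsilon$. Set
\begin{equation*}
\tilde{K}_\varepsilon:=[0,\tau_1)\times[0,\tau_2)\times\overline{B_R(0)}\subset\tilde{\mathbb X},
\end{equation*}
which is compact as a product of compact sets. Since $\tilde{\mu}_s=\delta_{s\,\mathrm{mod}\,\tau_1}\times\delta_{s\,\mathrm{mod}\,\tau_2}\times\rho_s$, we get $\tilde{\mu}_s(\tilde{\mathbb X}\setminus\tilde{K}_\varepsilon)=\rho_s(\{|x|>R\})\leq\varepsilon$ uniformly in $s$. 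Averaging,
\begin{equation*}
\bar{\tilde{\mu}}_T(\tilde{\mathbb X}\setminus\tilde{K}_\varepsilon)=\frac{1}{T}\int_0^T\tilde{\mu}_s(\tilde{\mathbb X}\setminus\tilde{K}_\varepsilon)\,ds\leq\varepsilon
\end{equation*}
for every $T>0$. This proves tightness of $\mathcal{M}$, and Prokhorov's theorem then yields the weak (sequential) compactness in $\mathcal{P}(\tilde{\mathbb X})$.

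There is essentially no obstacle here; the only subtlety is checking that the quotient metric $d_i$ really does render $[0,\tau_i)$ compact, which is what allows us to suppress the first two coordinates entirely and reduce to a uniform Chebyshev estimate on $\rho_s$ coming from Lemma~\ref{Bounded solution}.
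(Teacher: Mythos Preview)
Your proof is correct and follows essentially the same approach as the paper: both arguments observe that $([0,\tau_i),d_i)$ is compact (homeomorphic/isometric to a circle), use the uniform $L^2$-bound $\sup_{s}\mathbb{E}|\varphi(s)|^2<\infty$ together with Chebyshev to control the $\mathbb{R}^d$-marginal, and take the product compact set $[0,\tau_1)\times[0,\tau_2)\times\overline{B_R(0)}$. The only cosmetic difference is that you phrase the estimate via $\tilde{\mu}_s(\tilde{\mathbb X}\setminus\tilde{K}_\varepsilon)$ while the paper works with $\tilde{\mu}_s(\tilde{\Gamma}_\varepsilon)$ directly.
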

\begin{proof}
	We just need to prove that for any $\epsilon>0$, there exists a compact set $\tilde{\Gamma}_{\epsilon}\in \mathcal{B}(\tilde{\mathbb X})$ such that for all $T\in \mathbb{R}^+$, we have
	$$\bar{\tilde{\mu}}_T({\tilde{\Gamma}_{\epsilon}})>1-\epsilon.$$
	Since the entrance measure $\rho_t$ is the law of the $L^2$-bounded random path $\varphi(t)$, then $\{\rho_t: t\in \mathbb{R}\}$ is tight because 
	\begin{equation}
	\begin{split}
	\rho_t(\bar{B}_{N}(0))&=P(|\varphi(t)| \leq N)\\
	                       &=1-P(|\varphi(t)| > N)\\
	                       &\geq 1-\frac{\|\varphi(t)\|_2^2}{N^2}\\
	                       &\geq 1-\frac{\sup_{t\in \mathbb{R}}\|\varphi(t)\|_2^2}{N^2}.
	\end{split}
	\end{equation}
     Then for the given $\epsilon>0$, there exists a compact set $\Gamma_{\epsilon}\subset \mathbb{R}^d$ such that for all $t\in \mathbb{R}$,
	$$\rho_t(\Gamma_{\epsilon})>1-\epsilon.$$
	It is well-known that $[0,\tau_1), [0,\tau_2)$ are both homeomorphic to the circle $S^1$ under metrics $d_1,d_2$ respectively. Hence they are compact and $\tilde{\Gamma}_{\epsilon}=[0,\tau_1)\times [0,\tau_2) \times \Gamma_{\epsilon}$ is compact on $\tilde{\mathbb X}$. Moreover
	\begin{equation}
		\begin{split}
		\bar{\tilde{\mu}}_T(\tilde{\Gamma}_{\epsilon})=\frac{1}{T}\int_0^T\tilde{\mu}_s(\tilde{\Gamma}_{\epsilon})ds=\frac{1}{T}\int_0^T\rho_s(\Gamma_{\epsilon})ds>1-\epsilon,
		\end{split}
	\end{equation}
	which completes our proof.
\end{proof}

For any $f\in C^0(\tilde{\mathbb X})$, which is defined as the collection of $\mathcal{B}(\tilde{\mathbb X})$ measurable functions, we define
\begin{equation}
\label{P^*_t act on functions}
\tilde{P}_tf(\tilde{x})=\int_{\tilde{\mathbb X}}\tilde{P}(t,\tilde{x},d\tilde{y})f(\tilde{y}), \text{ for any } \tilde{x}\in \tilde{\mathbb X}.
\end{equation}
We have the following Feller property of the semi-group $\tilde{P}_t, t\geq 0$.

\begin{proposition}
	\label{Feller property of P^*}
	Assume Conditions \ref{Quasi-periodic condition}, \ref{Quasi-dissipative} and $\alpha>\frac{\beta^2}{2}$. Then the semi-group $\tilde{P}_t, t\geq 0$, defined by (\ref{P^*_t act on functions}), is Feller, i.e. for all $f\in C_b(\tilde{\mathbb X})$, $\tilde{P}_tf\in C_b(\tilde{\mathbb X})$.
\end{proposition}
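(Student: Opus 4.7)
The plan is to verify both boundedness and continuity of $\tilde{P}_tf$ for $f \in C_b(\tilde{\mathbb{X}})$. Boundedness is immediate from $|\tilde{P}_t f(\tilde{x})| = |\mathbb{E}[f(\tilde{\Phi}(t,\omega)\tilde{x})]| \leq \|f\|_\infty$. For continuity at a point $\tilde{x}=(s_1,s_2,x)$, I would take any sequence $\tilde{x}_n = (s_1^n, s_2^n, x_n) \to \tilde{x}$ in $(\tilde{\mathbb{X}}, d)$ and aim to show $\tilde{\Phi}(t,\omega)\tilde{x}_n \to \tilde{\Phi}(t,\omega)\tilde{x}$ in probability; then since $f$ is bounded and continuous, dominated convergence gives $\tilde{P}_t f(\tilde{x}_n) \to \tilde{P}_t f(\tilde{x})$.

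Recall that $\tilde{\Phi}(t,\omega)\tilde{x}_n = (t+s_1^n \bmod \tau_1,\ t+s_2^n \bmod \tau_2,\ K^{s_1^n,s_2^n}(t,0,x_n,\omega))$. The first two coordinates converge deterministically because the shift $r \mapsto (t+r)\bmod \tau_i$ is continuous from $([0,\tau_i), d_i)$ into itself (the circle metric is invariant under translation). The real work is the convergence of the third coordinate $K^{s_1^n, s_2^n}(t, 0, x_n)$ to $K^{s_1, s_2}(t, 0, x)$, which I would establish in $L^2$ (hence in probability).

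To prove the $L^2$ bound, I would apply It\^o's formula to $|K^{s_1^n,s_2^n}(t,0,x_n) - K^{s_1,s_2}(t,0,x)|^2$. The cross term $K^{s_1^n,s_2^n} - K^{s_1,s_2}$ against $\tilde{b}^{s_1^n,s_2^n}(\cdot,K^{s_1^n,s_2^n}) - \tilde{b}^{s_1,s_2}(\cdot,K^{s_1,s_2})$ splits into a dissipative part controlled by Condition \ref{Quasi-dissipative}(1) and a parameter-perturbation part controlled by Condition \ref{Quasi-dissipative}(4) (giving a Hölder bound in $|s_1^n-s_1|^\gamma + |s_2^n-s_2|^\gamma$). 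The diffusion bracket is handled by Conditions \ref{Quasi-dissipative}(2) and (4). Using Young's inequality and the hypothesis $\alpha > \beta^2/2$ (the $p=2$ version of the earlier exponent, cf.\ Lemma \ref{Exponential decay} and the derivation of \eqref{2020jun4}), one obtains
$$\mathbb{E}\bigl[|K^{s_1^n,s_2^n}(t,0,x_n) - K^{s_1,s_2}(t,0,x)|^2\bigr] \leq C\bigl(|s_1^n-s_1|^{2\gamma} + |s_2^n-s_2|^{2\gamma} + |x_n-x|^2\bigr),$$
where for the Euclidean differences $|s_i^n - s_i|$ one uses periodicity of $\tilde{b}^{r_1,r_2},\tilde{\sigma}^{r_1,r_2}$ to pass from the circle distance $d_i(s_i^n,s_i)$ to an ordinary distance $\leq d_i(s_i^n,s_i)$ after an appropriate choice of representative in $\mathbb{R}$.

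The main obstacle is this last step: careful bookkeeping is required to handle the parameter-dependence of both drift and diffusion simultaneously in one It\^o calculation, and to ensure the Hölder estimate in $(s_1,s_2)$ comes out with the circle metric rather than the Euclidean one. No additional growth hypothesis (Condition \ref{Quasi k growth}) or higher moments are needed, which is consistent with the proposition requiring only $\alpha > \beta^2/2$; working throughout in $L^2$ rather than pathwise avoids the stronger conditions invoked in Lemma \ref{Continuous property of K}.
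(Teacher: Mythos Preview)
Your proposal is correct and follows essentially the same route as the paper: both establish the key $L^2$ estimate
\[
\mathbb{E}\bigl[|K^{s_1^n,s_2^n}(t,0,x_n)-K^{s_1,s_2}(t,0,x)|^2\bigr]\le C\bigl(|x_n-x|^2+|s_1^n-s_1|^{2\gamma}+|s_2^n-s_2|^{2\gamma}\bigr)
\]
via It\^o's formula and Condition~\ref{Quasi-dissipative}, exactly as in the derivation of \eqref{2020jun4}, and then pass to convergence of $\tilde P_tf(\tilde x_n)$. The only presentational difference is in this last passage: you invoke the standard fact that convergence in probability together with $f\in C_b$ yields convergence of expectations (via a subsequence/DCT argument), whereas the paper writes this out by hand, splitting $\tilde P_tf(\tilde x_n)-\tilde P_tf(\tilde x)$ into two pieces and controlling the piece involving the $K$-difference by localizing on events $\{|K^{r_1,r_2}(t,0,x)|\le N\}$, $\{|K^{r_1^n,r_2^n}(t,0,x_n)|\le N\}$ and using uniform continuity of $f$ on compact sets. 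Your remark about using periodicity of $\tilde b^{r_1,r_2},\tilde\sigma^{r_1,r_2}$ to reconcile the circle distance $d_i$ with a Euclidean representative is a point the paper leaves implicit.
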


\begin{proof}
	Obviously $\|\tilde{P}_tf\|_{\infty}\leq \|f\|_{\infty}$, then we just need to prove that $\tilde{P}_tf$ is continuous. It is sufficient to prove that for any sequence $\tilde{x}_n=(r_1^n,r_2^n,x_n), \tilde{x}=(r_1,r_2,x)\in \tilde{\mathbb X}$ with $\tilde{x}_n\xrightarrow{n\rightarrow \infty} \tilde{x}$, we have $\tilde{P}_tf(\tilde{x}_n)\xrightarrow{n\rightarrow \infty} \tilde{P}_tf(\tilde{x})$. Since
	\begin{equation}
	\begin{split}
	\tilde{P}_tf(\tilde{x})
	&=\int_{[0, \tau_1) \times [0, \tau_2)\times\mathbb{R}^d}\tilde{P}(t,(r_1,r_2,x),ds_1\times ds_2\times dy)f(s_1,s_2,y)\\
	&=\int_{[0, \tau_1) \times [0, \tau_2)\times\mathbb{R}^d}P(\tilde{\Phi}(t,\cdot)(r_1,r_2,x) \in ds_1\times ds_2\times dy)f(s_1,s_2,y)\\
	&=\int_{\mathbb{R}^d}P(K^{r_1,r_2}(t,0,x) \in dy)f(t+r_1\mod \tau_1,t+r_2\mod \tau_2,y)\\
	&=\mathbb{E}f(t+r_1\mod \tau_1,t+r_2\mod \tau_2,K^{r_1,r_2}(t,0,x)).
	\end{split}
	\end{equation}
	Let $f_t(r_1,r_2,x):=f(t+r_1\mod \tau_1,t+r_2\mod \tau_2, x)$. Then we have
	\begin{equation}
	\begin{split}
	|\tilde{P}_tf(\tilde{x}_n)-\tilde{P}_tf(\tilde{x})|
	=&|\mathbb{E}f_t(r_1^n,r_2^n,K^{r_1^n,r_2^n}(t,0,x_n))-\mathbb{E}f_t(r_1,r_2,K^{r_1,r_2}(t,0,x))|\\
	\leq& |\mathbb{E}f_t(r_1^n,r_2^n,K^{r_1^n,r_2^n}(t,0,x_n))-\mathbb{E}f_t(r_1^n,r_2^n,K^{r_1,r_2}(t,0,x))|\\
	&+|\mathbb{E}f_t(r_1^n,r_2^n,K^{r_1,r_2}(t,0,x))-\mathbb{E}f_t(r_1,r_2,K^{r_1,r_2}(t,0,x))|\\
	=:&A_1^n+A_2^n.
	\end{split}
	\end{equation}
	Since $f\in C_b(\tilde{\mathbb X})$, then $f_t\in C_b(\tilde{\mathbb X})$ and $f_t(r_1^n,r_2^n,K^{r_1,r_2}(t,0,x))\xrightarrow{a.s.} f_t(r_1,r_2,K^{r_1,r_2}(t,0,x))$ as $n\rightarrow \infty$. By Lebesgue's 
	dominated  convergence theorem, we have
	\begin{equation}
	\begin{split}
	A_2^n=|\mathbb{E}f_t(r_1^n,r_2^n,K^{r_1,r_2}(t,0,x))-\mathbb{E}f_t(r_1,r_2,K^{r_1,r_2}(t,0,x))| \xrightarrow{n\rightarrow \infty} 0.
	\end{split}
	\end{equation}
	Furthermore, let $$b_n=\sup_{t\in \mathbb{R}, x\in \mathbb{R}^d}|\tilde{b}^{r_1^n,r_2^n}(t,x)-\tilde{b}^{r_1,r_2}(t,x)|$$ and 
	$$\sigma_n=\sup_{t\in \mathbb{R}, x\in \mathbb{R}^d}|\tilde{\sigma}^{r_1^n,r_2^n}(t,x)-\tilde{\sigma
	}^{r_1,r_2}(t,x)|.$$
    By (4) in Condition \ref{Quasi-dissipative}, we know that $b_n+\sigma_n\leq C(|r_1-r_1^n|^{\gamma}+|r_2-r_2^n|^{\gamma})$. Then $\lim_{n\rightarrow \infty}b_n=\lim_{n\rightarrow \infty}\sigma_n=0$. Similar to the estimation of \eqref{2020jun4}, we know that 
    $$E\big[\big|K^{r_1^n,r_2^n}(t,0,x_n)-K^{r_1,r_2}(t,0,x)\big|^2\big]\leq C(\alpha,\beta)\big(|x_n-x|^2+|r_1^n-r_1|^{2\gamma}+|r_2^n-r_2|^{2\gamma}\big).$$
   Then we have $K^{r_1^n,r_2^n}(t,0,x_n)\xrightarrow[n\rightarrow \infty]{L^2} K^{r_1,r_2}(t,0,x)$. Let 
	$$R_N=\{\omega: |K^{r_1,r_2}(t,0,x,\omega)|\leq N\}$$
	and
	$$R_N^n=\{\omega: |K^{r_1^n,r_2^n}(t,0,x_n,\omega)|\leq N\}.$$
	Then by the Chebyshev inequality we have $\lim_{N\rightarrow \infty}(\inf_{n\in \mathbb{N}}P(R_N^n\cap R_N))=1$. Since $f$ is continuous, then it is uniformly continuous on all compact subset of $\tilde{\mathbb X}$. Then for arbitrary $\epsilon>0$, there exists $\delta_N^{\epsilon}>0$ such that when $(t_1,t_2,x), (s_1,s_2,y)\in [0,\tau_1)\times [0,\tau_2) \times \bar{B}_N(0)$, where $\bar{B}_{N}(0)$ is a closed ball centred at 0 with radius $N$ in $\mathbb{R}^d$, and $d_1(t_1,s_1)+d_2(t_2,s_2)+|x-y|<\delta_N^{\epsilon}$, we have $|f((t_1,t_2,x))-f((s_1,s_2,y))|<\epsilon$. Set
	$$C^n_{\delta_N^{\epsilon}}=\{\omega: |K^{r_1^n,r_2^n}(t,0,x_n)-K^{r_1,r_2}(t,0,x)|<\delta_N^{\epsilon}\}.$$
	Then also by the Chebyshev inequality $\lim_{n\rightarrow \infty}P(C^n_{\delta_N^{\epsilon}})=1$. Hence for all $\omega\in C^n_{\delta_N^{\epsilon}}\cap R_N^n\cap R_N$,
	$$|f_t(r_1^n,r_2^n,K^{r_1^n,r_2^n}(t,0,x_n))-f_t(r_1^n,r_2^n,K^{r_1,r_2}(t,0,x))|<\epsilon.$$
	Therefore
	\begin{equation}
	\begin{split}
	\limsup_{n\rightarrow \infty}A_1^n
	=&\limsup_{n\rightarrow \infty} |\mathbb{E}f_t(r_1^n,r_2^n,K^{r_1^n,r_2^n}(t,0,x_n))-\mathbb{E}f_t(r_1^n,r_2^n,K^{r_1,r_2}(t,0,x))|\\
	\leq&\epsilon+2\|f\|_{\infty}\limsup_{n\rightarrow \infty} [(1-P(C^n_{\delta_N^{\epsilon}}))+(1-P(R_N^n\cap R_N))]\\
	=&\epsilon.
	\end{split}
	\end{equation}
	Since $\epsilon>0$ is arbitrary, we have $A_1^n\xrightarrow{n\rightarrow \infty} 0$. We complete the proof of $\tilde{P}_tf(\tilde{x}_n)\xrightarrow{n\rightarrow \infty} \tilde{P}_tf(\tilde{x})$.
	
\end{proof}

From Lemma \ref{Lemma of tight measure set} and Proposition \ref{Feller property of P^*}, we have the existence of invariant measure under $\tilde{P}^*$.

\begin{theorem}
	\label{Invariant measure}
	Assume Conditions \ref{Quasi-periodic condition}, \ref{Quasi-dissipative} and $\alpha>\frac{\beta^2}{2}$. Then there exists a uniquen invariant probability measure with respect to the semi-group $\tilde{P}^*$ which is given by $$\frac{1}{\tau_1\tau_2}\int_0^{\tau_1}\int_0^{\tau_2}\delta_{s_1}\times\delta_{s_2}\times\tilde{\rho}_{s_1,s_2}ds_1ds_2.$$
	Moreover, this invariant measure is ergodic with respect to the semigroup $\tilde{P}^*$.
\end{theorem}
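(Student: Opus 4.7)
The plan is to verify directly that $\bar{\tilde{\mu}}:=\frac{1}{\tau_1\tau_2}\int_0^{\tau_1}\!\!\int_0^{\tau_2}\delta_{s_1}\times\delta_{s_2}\times\tilde{\rho}_{s_1,s_2}\,ds_1ds_2$ is $\tilde{P}^*$-invariant, then to use a coupling-contraction argument on the fibres to show it is the only such measure, and finally to read off ergodicity. The invariance step rests on the identity
$$\int_{\mathbb{R}^d}P\bigl(K^{s_1,s_2}(t,0,x)\in\Gamma\bigr)\tilde{\rho}_{s_1,s_2}(dx)=\tilde{\rho}_{s_1+t,s_2+t}(\Gamma),\qquad \Gamma\in\mathcal{B}(\mathbb{R}^d),$$
which I would derive as follows: since $\tilde{\rho}_{s_1,s_2}=\mathcal{L}(\varphi^{s_1,s_2}(0))$ and $\varphi^{s_1,s_2}$ is a random path of $K^{s_1,s_2}$, the left side equals $\mathcal{L}(\varphi^{s_1,s_2}(t))$; the reparameterisation property \eqref{2019aug3} applied pathwise (and taking $L^p$-limits as the initial time goes to $-\infty$) yields $\varphi^{s_1,s_2}(t,\omega)=\tilde{\varphi}(s_1+t,s_2+t,\theta_t\omega)$, and the $P$-preservation of $\theta_t$ then identifies the law as $\tilde{\rho}_{s_1+t,s_2+t}$. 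Combining this identity with a change of variables $u_i=s_i+t\mod\tau_i$ and the periodicity of $\tilde{\rho}$ in each argument, a direct computation gives $\tilde{P}^*_t\bar{\tilde{\mu}}(\tilde{\Gamma})=\bar{\tilde{\mu}}(\tilde{\Gamma})$. As an alternative verification that also identifies the explicit form, $\bar{\tilde{\mu}}_T$ converges weakly to $\bar{\tilde{\mu}}$: rewriting $\int f\,d\bar{\tilde{\mu}}_T=\frac{1}{T}\int_0^T h(s,s)\,ds$ with $h(s_1,s_2):=\int f(s_1,s_2,x)\tilde{\rho}_{s_1,s_2}(dx)$ continuous (the $L^p$-estimate \eqref{2020jun4} under Conditions \ref{Quasi-periodic condition}-\ref{Quasi-dissipative} gives weak continuity of $(s_1,s_2)\mapsto\tilde{\rho}_{s_1,s_2}$) and doubly periodic, the continuous Weyl equidistribution theorem applied to the Fourier expansion of $h$ produces exactly $\int f\,d\bar{\tilde{\mu}}$ as the limit, and Lemma \ref{Lemma of tight measure set} together with Proposition \ref{Feller property of P^*} closes the standard Krylov-Bogolyubov loop.

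For uniqueness, let $\nu$ be any $\tilde{P}^*$-invariant probability measure. Its push-forward $\pi_*\nu$ under the natural projection $\pi:\tilde{\mathbb{X}}\to\mathbb{T}:=[0,\tau_1)\times[0,\tau_2)$ is invariant under the continuous Kronecker flow $T_t(s_1,s_2)=(s_1+t\mod\tau_1,\,s_2+t\mod\tau_2)$; the rational independence of $1/\tau_1,1/\tau_2$ makes this flow uniquely ergodic, so $\pi_*\nu=\frac{1}{\tau_1\tau_2}ds_1ds_2=\pi_*\bar{\tilde{\mu}}$. I would then couple $\nu$ and $\bar{\tilde{\mu}}$ by disintegrating both over $\mathbb{T}$ and joining the fibres independently, producing $\gamma$ supported on pairs $((s_1,s_2,x),(s_1,s_2,y))$ with common torus coordinates. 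For bounded Lipschitz $f\in C_b(\tilde{\mathbb{X}})$ with Lipschitz constant $L$ in the $\mathbb{R}^d$-variable, Lemma \ref{Exponential decay} applied with $p=2$ to $K^{s_1,s_2}$ produces
$$|\tilde{P}_tf(s_1,s_2,x)-\tilde{P}_tf(s_1,s_2,y)|\leq L\,e^{-(\alpha-\beta^2/2)t}\,|x-y|.$$
Combining this bound with the trivial bound $2\|f\|_\infty$ and truncating at $|x-y|\leq R$ (the tail controlled by tightness of the $\mathbb{R}^d$-marginals of $\nu$ and $\bar{\tilde{\mu}}$, which exists for any probability measure on $\mathbb{R}^d$), invariance gives
$$\Big|\int f\,d\nu-\int f\,d\bar{\tilde{\mu}}\Big|=\Big|\int\bigl(\tilde{P}_tf(\tilde{x})-\tilde{P}_tf(\tilde{y})\bigr)d\gamma(\tilde{x},\tilde{y})\Big|\xrightarrow{t\to\infty}0.$$
Weak density of bounded Lipschitz functions in $C_b(\tilde{\mathbb{X}})$ then forces $\nu=\bar{\tilde{\mu}}$.

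Ergodicity with respect to $\tilde{P}^*$ is now immediate: the convex set of $\tilde{P}^*$-invariant probability measures reduces to the singleton $\{\bar{\tilde{\mu}}\}$, whose sole element is trivially extremal, the standard characterisation of ergodicity for Markovian semigroups. The main obstacle is the uniqueness step: a generic invariant $\nu$ need not possess any uniform moment bound, so the exponentially-decaying Lipschitz estimate cannot be integrated against the coupling without the truncation-tightness device above. The rational-independence hypothesis is invoked at two separate points, namely via the continuous Weyl theorem in the identification of $\bar{\tilde{\mu}}$ as the limit of $\bar{\tilde{\mu}}_T$, and via the unique ergodicity of the Kronecker base flow in the uniqueness argument.
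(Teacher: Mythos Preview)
Your proof is correct and takes a genuinely different route from the paper's.

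\textbf{Existence.} The paper obtains the invariant measure indirectly: it first runs a Krylov--Bogolyubov argument on the family $\bar{\tilde{\mu}}_T$ (using Lemma~\ref{Lemma of tight measure set} and Proposition~\ref{Feller property of P^*}) to get \emph{some} invariant limit, and then identifies the limit as the displayed average by applying Birkhoff's ergodic theorem to the minimal rotation on the torus. Your direct verification---the entrance-measure identity $\int P(K^{s_1,s_2}(t,0,x)\in\Gamma)\tilde{\rho}_{s_1,s_2}(dx)=\tilde{\rho}_{s_1+t,s_2+t}(\Gamma)$ followed by the periodic change of variables---is valid and more economical; in fact the paper explicitly remarks after the proof that such a direct check was ``not obvious''. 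Your derivation of the identity via the random-path property of $\varphi^{s_1,s_2}$ and the reparameterisation relation \eqref{2019aug3} is exactly the missing ingredient.

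\textbf{Uniqueness.} The paper shows $\upsilon(\tilde{\mathcal{O}})\geq\bar{\tilde{\mu}}(\tilde{\mathcal{O}})$ for every open $\tilde{\mathcal{O}}$ by combining the $L^2$-decay estimate for $K^{r_1,r_2}$ with Chebyshev, a $\delta$-shrinking of sections, another appeal to Birkhoff for the base flow, and Portmanteau; it then invokes Lemma~\ref{Two measure are the same}. Your argument instead projects to the torus (unique ergodicity of the Kronecker flow forces $\pi_*\nu$ to be Lebesgue), disintegrates, and couples over common torus coordinates, exploiting the same exponential decay but in its Lipschitz-contraction form. Both routes use the rational-independence hypothesis and the pathwise dissipativity; yours is more probabilistic and avoids the $\delta$-thickening bookkeeping, at the cost of the (standard) truncation-by-tightness step you correctly flag. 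The ergodicity conclusion is the same in both proofs: a unique invariant measure is automatically ergodic.
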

\begin{proof}
	Existence: From Lemma \ref{Lemma of tight measure set}, we know that $\mathcal{M}$ defined by (\ref{Tight measure set}) is tight and hence weakly compact. This means that there exists a sequence $\{T_n\}_{n\geq 1}$ with $T_n\uparrow \infty$ as $n\rightarrow \infty$ and a probability measure $\bar{\tilde{\mu}}\in \mathcal{P}(\tilde{\mathbb X})$ such that $\bar{\tilde{\mu}}_{T_n}\xrightarrow{w} \bar{\tilde{\mu}}$. Moreover, for any fixed $t>0$, since 
	\begin{equation}
		\begin{split}
		\tilde{P}^*_t\bar{\tilde{\mu}}_{T_n}-\bar{\tilde{\mu}}_{T_n}
		&=\frac{1}{T_n}\int_{0}^{T_n}\tilde{P}^*_t\tilde{\mu}_sds-\frac{1}{T_n}\int_{0}^{T_n}\tilde{\mu}_sds\\
		&=\frac{1}{T_n}\int_{0}^{T_n}\tilde{\mu}_{t+s}ds-\frac{1}{T_n}\int_{0}^{T_n}\tilde{\mu}_sds\\
		&=\frac{1}{T_n}\int_{t}^{t+T_n}\tilde{\mu}_sds-\frac{1}{T_n}\int_{0}^{T_n}\tilde{\mu}_sds\\
		&=\frac{1}{T_n}\int_{T_n}^{t+T_n}\tilde{\mu}_sds-\frac{1}{T_n}\int_{0}^{t}\tilde{\mu}_sds,\\
		\end{split}
	\end{equation}
	so
	\begin{equation*}
		\begin{split}
		\limsup_{n\rightarrow \infty}\|\tilde{P}^*_t\bar{\tilde{\mu}}_{T_n}-\bar{\tilde{\mu}}_{T_n}\|_{BV}
		 &\leq \limsup_{n\rightarrow \infty}\frac{1}{T_n}(\int_0^t\|\tilde{\mu}_s\|_{BV}ds+\int_{T_n}^{T_n+t}\|\tilde{\mu}_s\|_{BV}ds)\\
		 &\leq \limsup_{n\rightarrow \infty}\frac{2t}{T_n}=0.
		\end{split}
	\end{equation*}
	Hence $\tilde{P}^*_t\bar{\tilde{\mu}}_{T_n}\xrightarrow{w} \bar{\tilde{\mu}}$. On the other hand, for any $f\in C_b(\tilde{\mathbb X})$, by Proposition \ref{Feller property of P^*}, we have $\tilde{P}_tf\in C_b(\tilde{\mathbb X})$, and therefore
	\begin{equation}
		\begin{split}
		\lim_{n\rightarrow \infty}\int_{\tilde{\mathbb X}}f(\tilde{y})\tilde{P}^*_t\bar{\tilde{\mu}}_{T_n}(d\tilde{y})
		 &=\lim_{n\rightarrow \infty}\int_{\tilde{\mathbb X}} \int_{\tilde{\mathbb X}}f(\tilde{y})\tilde{P}(t, \tilde{x}, d\tilde{y}) \bar{\tilde{\mu}}_{T_n}(d\tilde{x})\\
		 &=\lim_{n\rightarrow \infty}\int_{\tilde{\mathbb X}}\tilde{P}_tf(\tilde{x})\bar{\tilde{\mu}}_{T_n}(d\tilde{x})\\
		 &=\int_{\tilde{\mathbb X}}\tilde{P}_tf(\tilde{x})\bar{\tilde{\mu}}(d\tilde{x})\\
		 &=\int_{\tilde{\mathbb X}}f(\tilde{y})\tilde{P}^*_t\bar{\tilde{\mu}}(d\tilde{y}).
		\end{split}
	\end{equation}
    This means $\tilde{P}^*_t\bar{\tilde{\mu}}_{T_n}\xrightarrow{w} \tilde{P}^*_t\bar{\tilde{\mu}}$.  Summarising above we have that $\tilde{P}^*_t\bar{\tilde{\mu}}=\bar{\tilde{\mu}}$.
    
    Moreover, by \eqref{2020jun1*} in Lemma \ref{Continuous property of K}, we know that 
    $$\lim_{(t,s)\rightarrow (t_0,s_0)}\|\tilde{\varphi}(t,s)-\tilde{\varphi}(t_0,s_0)\|_2^2=0.$$   
    Then similar to the proof of Proposition \ref{Weakly convergence of P(t,s)}, we know that $\tilde{\rho}$ is continuous under the weak topology in $\mathcal{P}(\mathbb{R}^d)$, i.e. for all $f\in C_b(\mathbb{R}^d)$,
    $$\lim_{(t,s)\rightarrow (t_0,s_0)}\int_{\mathbb{R}^d}f(x)\tilde{\rho}_{t,s}(dx)=\int_{\mathbb{R}^d}f(x)\tilde{\rho}_{t_0,s_0}(dx).$$
    Let $\hat{\mu}$ defined by (\ref{Quasi-periodic measure after lift}). It is easy to check that $\hat{\mu}$ is also continuous under the weak topology in $\mathcal{P}(\tilde{\mathbb X})$.
    Since $\frac{1}{\tau_1}$ and $\frac{1}{\tau_2}$ are rationally linearly independent, by definition 5.1 in \cite{P.Walters1982}, $T_t: [0,\tau_1)\times [0,\tau_2) \rightarrow [0,\tau_1)\times [0,\tau_2)$ defined by 
    $$T_t(s_1,s_2)=(t+s_1\mod \tau_1,\ t+s_2\mod \tau_2), \text{ for all } s_1,s_2\in [0,\tau_1)\times [0,\tau_2)$$
    is a minimal rotation. Then applying Theorem 6.20 in \cite{P.Walters1982}, we know that $\frac{1}{\tau_1\tau_2}L$ is a unique ergodic probability measure on $[0,\tau_1)\times [0,\tau_2)$, where $L$ present the Lebesgue measures. Hence by Birkhoff's ergodic theory,
    \begin{equation*}
    \begin{split}
    \bar{\tilde{\mu}}_T&=\frac{1}{T}\int_{0}^{T}\tilde{\mu}_tdt\\
           &=\frac{1}{T}\int_{0}^{T}\hat{\mu}_{T_t(0,0)}dt\\
           &\xrightarrow[T\rightarrow \infty]{w}\int_{[0, \tau_1) \times [0, \tau_2)}\hat{\mu}_{s_1,s_2}\frac{1}{\tau_1\tau_2}ds_1ds_2.
    \end{split}
    \end{equation*}
    So
    $$\bar{\tilde{\mu}}=\int_{[0, \tau_1) \times [0, \tau_2)}\hat{\mu}_{s_1,s_2}\frac{1}{\tau_1\tau_2}ds_1ds_2=\frac{1}{\tau_1\tau_2}\int_0^{\tau_1}\int_0^{\tau_2}\delta_{s_1}\times\delta_{s_2}\times\tilde{\rho}_{s_1,s_2}ds_1ds_2$$
    is an invariant measure with respect to $\tilde{P}^*.$
    
    Uniqueness: We need to prove that for any invariant probability measure $\upsilon$, we have $\upsilon=\bar{\tilde{\mu}}$. By Lemma \ref{Two measure are the same}, we only need to prove that for any open set $\tilde{\mathcal{O}}\in \mathcal{B}(\tilde{\mathbb{X}})$, we have $\upsilon(\tilde{\mathcal{O}})\geq \bar{\tilde{\mu}}(\tilde{\mathcal{O}})$. Define
    $$\tilde{\mathcal{O}}^{r_1,r_2}=\{x\in \mathbb{R}^d: (r_1\mod \tau_1, \ r_2\mod \tau_2, x)\in \tilde{\mathcal{O}}\},$$
    $$\tilde{\mathcal{O}}^{r_1,r_2}_{\delta}=\{x: dist(x, (\tilde{\mathcal{O}}^{r_1,r_2})^c)>\delta\},$$
    and
    $$\tilde{\mathcal{O}}^{\delta}=\bigcup_{(s_1,s_2)\in [0,\tau_1)\times [0,\tau_2)}(s_1,s_2)\times\tilde{\mathcal{O}}^{s_1,s_2}_{\delta}.$$
    We know that $\tilde{\mathcal{O}}^{r_1,r_2}, \tilde{\mathcal{O}}^{r_1,r_2}_{\delta}$ and $\tilde{\mathcal{O}}^{\delta}$ are open sets, $\tilde{\mathcal{O}}^{r_1,r_2}_{\delta}\uparrow \tilde{\mathcal{O}}^{r_1,r_2}$ and $\tilde{\mathcal{O}}^{\delta}\uparrow \tilde{\mathcal{O}}$ as $\delta \downarrow 0$. Then
    \begin{equation}
    \label{Equation of invariant measure v}
    	\begin{split}
    	\upsilon\left(\tilde{\mathcal{O}}\right)
    	=&\lim_{T\rightarrow \infty}\frac{1}{T}\int_0^T\tilde{P}^*_t\upsilon\left(\tilde{\mathcal{O}}\right)dt\\
    	=&\lim_{T\rightarrow \infty}\frac{1}{T}\int_0^T\int_{\tilde{\mathbb X}}\tilde{P}\left(t,(s_1,s_2,x),\tilde{\mathcal{O}}\right)\upsilon(d\tilde{x})dt\\
    	=&\lim_{T\rightarrow \infty}\int_{\tilde{\mathbb X}} \frac{1}{T}\int_0^T P\left(K^{s_1,s_2}(t,0,x)\in \tilde{\mathcal{O}}^{t+s_1,t+s_2}\right)dt\upsilon\left(d\tilde{x}\right).
    	\end{split}
    \end{equation}
    Applying Remark \ref{Solution K shift time in r} and measure preserving transformation $\theta_t$, it follows that 
    $$\upsilon\left(\tilde{\mathcal{O}}\right)=\lim_{T\rightarrow \infty}\int_{\tilde{\mathbb X}} \frac{1}{T}\int_0^TP\left(K^{t+s_1,t+s_2}(0,-t,x)\in \tilde{\mathcal{O}}^{t+s_1,t+s_2}\right)dt\upsilon(d\tilde{x}).$$
    Similar to the proof of Theorem \ref{Existence and uniqueness of random path}, Lemma \ref{Bounded solution} and Lemma \ref{Exponential decay}, it can be shown that 
    the solution $K^{r_1,r_2}$ of (\ref{Solution K_r_1,r_2}) has the following estimate
    $$\|K^{r_1,r_2}(t,s,x)-\tilde{\varphi}^{r_1,r_2}(t)\|_2\leq Ce^{-(\alpha-\beta^2/2)(t-s)},$$
    for all $r_1,r_2\in\mathbb{R}, t\geq s$, where $C=C(\alpha,\beta,\tilde{M})$ only depends on $\alpha,\beta,\tilde{M}$ with $\tilde{M}=\sup_{t,s\in \mathbb{R}}(|\tilde{b}(t,s,0)|+\|\tilde{\sigma}(t,s,0)\|).$ Then for all $\delta>0$, by the Chebyshev inequality, we have
    \begin{equation}
    \label{Equation invariant measure v part 2}
    \begin{split}
    &P\left(K^{t+s_1,t+s_2}(0,-t,x)\in \tilde{\mathcal{O}}^{t+s_1,t+s_2}\right)\\
    \geq&P\left(\tilde{\varphi}^{t+s_1,t+s_2}(0)\in \tilde{\mathcal{O}}^{t+s_1,t+s_2}_{\delta}, \  |K^{t+s_1,t+s_2}(0,-t,x)-\tilde{\varphi}^{t+s_1,t+s_2}(0)|<\delta\right)\\
    \geq&P\left(\tilde{\varphi}^{t+s_1,t+s_2}(0)\in \tilde{\mathcal{O}}^{t+s_1,t+s_2}_{\delta}\right)-P\left(|K^{t+s_1,t+s_2}(0,-t,x)-\tilde{\varphi}^{t+s_1,t+s_2}(0)|\geq\delta\right)\\
    \geq&\tilde{\rho}_{t+s_1,t+s_2}\left(\tilde{\mathcal{O}}^{t+s_1,t+s_2}_{\delta}\right)-\frac{C^2}{\delta^2}e^{-2(\alpha-\beta^2/2)t}\\
    =&\hat{\mu}_{t+s_1,t+s_2}\left(\tilde{\mathcal{O}}^{\delta}\right)-\frac{C^2}{\delta^2}e^{-2(\alpha-\beta^2/2)t}.\\
    \end{split}
    \end{equation}
    Thus it turns out from (\ref{Equation of invariant measure v}), (\ref{Equation invariant measure v part 2}) and Fatou's Lemma that
     \begin{equation}
     \begin{split}
     \upsilon\left(\tilde{\mathcal{O}}\right)
     \geq&\liminf_{T\rightarrow \infty}\int_{\tilde{\mathbb X}} \frac{1}{T}\int_0^T\left(\hat{\mu}_{t+s_1,t+s_2}\left(\tilde{\mathcal{O}}^{\delta}\right)-\frac{C^2}{\delta^2}e^{-2(\alpha-\beta^2/2)t}\right)dt\upsilon(d\tilde{x})\\
     \geq&\int_{\tilde{\mathbb X}}\left(\liminf_{T\rightarrow \infty} \frac{1}{T}\int_0^T\hat{\mu}_{t+s_1,t+s_2}\left(\tilde{\mathcal{O}}^{\delta}\right)dt-\lim_{T\rightarrow \infty}\frac{C^2}{2\delta^2(\alpha-\beta^2/2)T}\right)\upsilon(d\tilde{x})\\
     \geq&\int_{\tilde{\mathbb X}}\left(\liminf_{T\rightarrow \infty} \frac{1}{T}\int_0^T\hat{\mu}_{t+s_1,t+s_2}\left(\tilde{\mathcal{O}}^{\delta}\right)dt\right)\upsilon(d\tilde{x}).
     \end{split}
     \end{equation}
     Again by Birkhoff's ergodic theory, we know that for all $(s_1,s_2)\in \mathbb{R}^2$
     $$\frac{1}{T}\int_0^T\hat{\mu}_{t+s_1,t+s_2}dt\xrightarrow[T\rightarrow \infty]{w} \bar{\tilde{\mu}}.$$
     Then since $\mathcal{O}^{\delta}$ is open, and by Proposition 2.4 in \cite{N. Ikeda-S. Watanabe}, we have
     $$\upsilon\left(\tilde{\mathcal{O}}\right) \geq \bar{\tilde{\mu}}\left(\mathcal{O}^{\delta}\right).$$
     Since $\mathcal{O}^{\delta}\uparrow \mathcal{O}$ as $\delta\downarrow 0$, the desired result follows from the continuity of measures with respect to an increasing sequence of sets. 
	 
	Moreover, by Theorem 3.2.6 in \cite{Da Prato1}, we know that this unique invariant measure $\bar{\tilde{\mu}}$ is ergodic.
\end{proof}

\begin{remark}
	It is not obvious how to check directly that $\frac{1}{\tau_1\tau_2}\int_0^{\tau_1}\int_0^{\tau_2}\delta_{s_1}\times\delta_{s_2}\times\tilde{\rho}_{s_1,s_2}ds_1ds_2$ is an invariant measure with respect to $\tilde{P}^*$ without appealing to the tightness argument.
\end{remark}
By a similar proof of Lemma \ref{Lemma of tight measure set}, Proposition \ref{Feller property of P^*} and Theorem \ref{Invariant measure}, 
it is not difficult to derive a general theorem. Here we denote by $\mathbb{X}$ a metric space, $\mathcal{B}(\mathbb{X})$ the Borel $\sigma$-algebra on $\mathbb{X}$, $B_b(\mathbb{X})$ the linear space of all $\mathcal{B}(\mathbb{X})$-bounded measurable functions and $\mathcal{P}(\mathbb{X})$ the collection of all probability measures on $(\mathbb{X}, \mathcal{B}(\mathbb{X}))$. Assume that $P(t,x,\Gamma), t\geq 0, x\in\mathbb{X}, \Gamma \in \mathcal{B}(\mathbb{X})$, is a Markovian transition function on $\mathbb{X}$. Denote by $P_t, t\geq 0: B_b(\mathbb{X}) \rightarrow B_b(\mathbb{X})$ and $P^*_t, t\geq 0: \mathcal{P}(\mathbb{X}) \rightarrow \mathcal{P}(\mathbb{X})$,
 the Markovian semi-groups associated with $P(t,x,\cdot)$. We say $\rho: \mathbb{R}\rightarrow \mathcal{P}(\mathbb{X})$ is an entrance measure with respect to $P^*$ if $P^*_t\rho_s=\rho_{t+s}$ for all $t\in \mathbb{R}^+, s\in \mathbb{R}$.
 We say $\rho$ is quasi-periodic if exists a measure-valued function $\tilde{\rho}_{s_1,s_2}$ satisfying the same relation with $\rho_s$ as in Definition \ref{Definition of quasi-periodic measure}. However we do not have the uniqueness of invariant measure in the general case. 
 
\begin{theorem}
	Assume the entrance measure $\rho$ with respect to $P_t^*, t\geq 0$, is a quasi-periodic measure with periods $\tau_1$ and $\tau_2$, where the reciprocals of $\tau_1$ and $\tau_2$ are rationally linearly independent. If $\{\bar{\rho}_T=\frac{1}{T}\int_0^T\rho_sds: T\in \mathbb{R}^+ \}$ is tight and the Markovian semi-group $P_t, t\geq 0$, is Feller, then there exists one invariant measure given by 
	$$\frac{1}{\tau_1\tau_2}\int_0^{\tau_1}\int_0^{\tau_2}\tilde{\rho}_{s_1,s_2}ds_1ds_2.$$
\end{theorem}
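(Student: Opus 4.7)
The plan is to mirror the argument of Theorem \ref{Invariant measure}, with the tightness and Feller hypotheses now given rather than derived from SDE structure. First, by Prokhorov's theorem applied to the tight family $\{\bar{\rho}_T\}$, I would extract a sequence $T_n \uparrow \infty$ and a probability measure $\bar{\rho} \in \mathcal{P}(\mathbb{X})$ with $\bar{\rho}_{T_n} \xrightarrow{w} \bar{\rho}$.

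Next, I would show that $\bar{\rho}$ is invariant for $P^*_t$. Since $P^*_t \rho_s = \rho_{t+s}$, a change of variables gives
\begin{equation*}
P^*_t \bar{\rho}_T - \bar{\rho}_T = \frac{1}{T}\int_T^{T+t}\rho_s\,ds - \frac{1}{T}\int_0^t \rho_s\,ds,
\end{equation*}
whose total-variation norm is at most $2t/T$ and hence tends to zero. The Feller hypothesis gives $P^*_t \bar{\rho}_{T_n} \xrightarrow{w} P^*_t \bar{\rho}$ via the duality $\int f\,dP^*_t\mu = \int P_tf\,d\mu$ with $P_tf \in C_b(\mathbb{X})$, and combining the two yields $P^*_t \bar{\rho} = \bar{\rho}$ for every $t \geq 0$.

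Finally, I would identify $\bar{\rho}$ with the stated integral using quasi-periodicity and unique ergodicity of the minimal rotation on the torus $\mathbb{T} := [0,\tau_1) \times [0,\tau_2)$. For each $f \in C_b(\mathbb{X})$, writing $G_f(s_1,s_2) = \tilde{\rho}_{s_1,s_2}(f)$ and $T_s(a,b) = (a+s \bmod \tau_1,\ b+s \bmod \tau_2)$, the identity $\rho_s = \tilde{\rho}_{s,s}$ yields
\begin{equation*}
\bar{\rho}_T(f) = \frac{1}{T}\int_0^T G_f(T_s(0,0))\,ds.
\end{equation*}
Since $1/\tau_1$ and $1/\tau_2$ are rationally independent, $\{T_s\}_{s\in\mathbb{R}}$ is a uniquely ergodic minimal rotation with invariant measure $\frac{1}{\tau_1\tau_2}L$ (Theorem 6.20 of Walters); whenever $G_f$ is continuous on $\mathbb{T}$, Oxtoby's theorem delivers uniform convergence of the time average to the space average $\frac{1}{\tau_1\tau_2}\int_0^{\tau_1}\int_0^{\tau_2}\tilde{\rho}_{s_1,s_2}(f)\,ds_1\,ds_2$, identifying $\bar{\rho}$ with the claimed formula.

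The main obstacle is this last identification: Oxtoby's theorem needs weak continuity of $(s_1,s_2)\mapsto \tilde{\rho}_{s_1,s_2}$ on $\mathbb{T}$, which in the abstract setting is not automatic. In Theorem \ref{Invariant measure} the required regularity came from continuity of $\tilde{\varphi}$ in the SDE case, so for this general statement one either needs to adopt weak continuity of $\tilde{\rho}$ as a mild standing hypothesis or to recover it by an approximation argument exploiting the Feller property together with the quasi-periodic relations on $\tilde{\rho}$.
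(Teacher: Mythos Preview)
Your proposal is correct and follows exactly the route the paper intends: the paper does not give a separate proof of this general theorem but says it follows ``by a similar proof'' to that of Lemma \ref{Lemma of tight measure set}, Proposition \ref{Feller property of P^*} and Theorem \ref{Invariant measure}, which is precisely the three-step argument (Prokhorov limit, invariance via Feller and the $2t/T$ estimate, identification via unique ergodicity of the torus rotation) you have written out. Your observation about the missing weak continuity of $(s_1,s_2)\mapsto\tilde\rho_{s_1,s_2}$ is well taken; the paper obtains this in the SDE setting from the $L^2$-continuity of $\tilde\varphi$ (Lemma \ref{Continuous property of K}) but leaves it implicit in the general statement, so treating it as a standing hypothesis is the natural reading.
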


  \section{Density of entrance measure and quasi-periodic measure}
  In this section, we will give a sufficient condition to guarantee the existence of the density of the entrance measure. We need an extra condition.
  
   \begin{condition}
   	\label{Sigma invertible and bounded}
  	The diffusion coefficient $\sigma$ in SDE (\ref{SDE}) is invertible with $\sup_{t\in \mathbb{R}, x\in \mathbb{R}^d}\|\sigma^{-1}(t,x)\|<\infty.$
  \end{condition}
  
  We now give the definition of the well-known BMO space and some lemmas which will used in this section.
  
  \begin{definition}
  	Denote by BMO(s,t) the space of all $(\mathcal{F}_s^r)_{s\leq r\leq t}$-adapted $\mathbb{R}^d$-valued process $M$ with
  	$$\|M\|_{BMO(s,t)}:=\sup_{T\in \mathcal{T}_s^t}\left\|\left(\mathbb{E}\left[\int_{T}^{t}|M_r|^2dr|\mathcal{F}_s^T\right]\right)^{\frac{1}{2}}\right\|_{L^{\infty}}<\infty,$$
  	where $s<t$ and $\mathcal{T}_s^t$ is the set of stopping times taking their values in $[s,t]$.
  \end{definition}
  
  Then we have the following lemma.
  
  \begin{lemma}
  	\label{BMO implies Lp}
  	Let $M\in BMO(s,t)$. Then there exists $p>1$ such that 
  	$$\mathbb{E}\left[\left(\mathcal{E}\left(\int_{s}^{t}M_rdW_r\right)\right)^p\right]<\infty,$$
  	where $\mathcal{E}\left(\int_{s}^{t}M_rdW_r\right):=\exp\{\int_{s}^{t}M_rdW_r-\frac{1}{2}\int_{s}^{t}|M_r|^2dr\}$.
  \end{lemma}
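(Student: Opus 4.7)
The plan is to follow Kazamaki's classical strategy: combine a John--Nirenberg-type energy inequality for BMO martingales with a Girsanov change of measure to reduce the $L^p$ bound on the stochastic exponential to an exponential moment estimate for $\langle N\rangle$ under a tilted probability. Throughout I write $N_r := \int_s^r M_u\,dW_u$, $\langle N\rangle_r = \int_s^r |M_u|^2\,du$, and $c := \|M\|_{BMO(s,t)}$.

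The first step is to establish, by induction on $n$, the energy inequality
$$\mathbb{E}\left[(\langle N\rangle_t - \langle N\rangle_\tau)^n \mid \mathcal{F}_\tau\right] \leq n!\,c^{2n} \quad \text{for every } \tau \in \mathcal{T}_s^t.$$
The base case $n=1$ is the definition of $\|M\|_{BMO}$. The induction step uses the identity $(\langle N\rangle_t - \langle N\rangle_\tau)^n = n\int_\tau^t (\langle N\rangle_t - \langle N\rangle_r)^{n-1}\,d\langle N\rangle_r$, conditional Fubini, the tower property, and the inductive hypothesis applied at $\tau = r$. Summing the Taylor expansion of the exponential then yields
$$\mathbb{E}\left[\exp(\lambda(\langle N\rangle_t - \langle N\rangle_\tau)) \mid \mathcal{F}_\tau\right] \leq (1 - \lambda c^2)^{-1} \quad \text{whenever } \lambda c^2 < 1.$$
Specialising $\lambda = p^2/2$ for $p^2 c^2 < 2$ verifies Novikov's condition for $\mathcal{E}(pN)$, so $\mathcal{E}(pN)$ is a genuine martingale and $dQ := \mathcal{E}(pN)_t\,dP$ defines a probability measure.

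Next, the algebraic identity $\mathcal{E}(N)^p = \mathcal{E}(pN)\cdot\exp(\tfrac{p(p-1)}{2}\langle N\rangle_t)$ lets me rewrite the target as
$$\mathbb{E}[\mathcal{E}(N)^p] = \mathbb{E}_Q\left[\exp\left(\tfrac{p(p-1)}{2}\langle N\rangle_t\right)\right].$$
Since quadratic variation is invariant under equivalent change of measure, the remaining task is to reapply the energy inequality under $Q$. By Girsanov, $\tilde W_r := W_r - p\int_0^r M_u\,du$ is a $Q$-Brownian motion with $M$ as integrand, and the key technical input is Kazamaki's BMO-stability lemma, which bounds $\|M\|_{BMO(Q)}$ in terms of $c$ and $p$ via Cauchy--Schwarz together with the $P$-side estimates obtained above. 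For $p > 1$ sufficiently close to $1$ (the constraint $\tfrac{p(p-1)}{2}\|M\|_{BMO(Q)}^2 < 1$ holds trivially at $p=1$ and both sides are continuous in $p$), one can satisfy $p^2 c^2 < 2$ and $\tfrac{p(p-1)}{2}\|M\|_{BMO(Q)}^2 < 1$ simultaneously, and the $Q$-energy inequality then gives the desired finiteness.

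The main obstacle is the BMO-stability step: quantitatively controlling $\|M\|_{BMO(Q)}$ in terms of $\|M\|_{BMO(P)}$ after the Girsanov tilt is the technical heart of Kazamaki's theorem. In a fully self-contained proof this would be the longest ingredient, but it is standard and would typically be invoked as a ready-made statement (e.g., Theorem 3.1 of Kazamaki's monograph on continuous exponential martingales and BMO), rather than reproved here.
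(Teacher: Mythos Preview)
Your proposal is correct and in fact goes considerably further than the paper's own proof: the paper simply invokes Theorem 3.1 of Kazamaki's monograph directly, observing that since $\Phi(1+)=\infty$ one can always find $p>1$ with $\|M\|_{BMO(s,t)}\leq\Phi(p)$. You instead sketch the mechanism behind Kazamaki's theorem (John--Nirenberg energy inequality, the identity $\mathcal{E}(N)^p=\mathcal{E}(pN)\exp\bigl(\tfrac{p(p-1)}{2}\langle N\rangle_t\bigr)$, and BMO-stability under the Girsanov tilt), and then note you would cite the same theorem anyway for the stability step. So the two approaches coincide at the citation level; your write-up is just more expository about what lies underneath.
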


  \begin{proof}
  	By Theorem 3.1 in \cite{Kazamaki94}, we know that if $\|M\|_{BMO(s,t)}\leq \Phi(p)$ for some $p>1$, where $\Phi$ is a continuous monotone function from $(1,\infty)$ to $\mathbb{R}_+$ with $\Phi(1+)=\infty$ and $\Phi(\infty)=0$, then $\mathcal{E}\left(\int_{s}^{t}M_rdW_r\right)$ is in $L^p$.
  \end{proof}
  
  We also need the following lemma which is almost the same as Lemma 4.1 in \cite{FZZ19}.
  
  \begin{lemma}
  	\label{Equivalent law}
  	Assume Conditions \ref{Dissipative} and \ref{Sigma invertible and bounded} hold. Let $X_t^{s,x}$ be the solution of SDE (\ref{SDE}) and $Z_t^{s,x}$ be the solution of the following SDE
  	\begin{equation}
  	\label{Diffusion SDE}
  	\begin{cases}
  	dZ_t=\sigma(t, Z_t)dW_t,  \quad t\geq s,\\
  	Z_s=x\in \mathbb{R}^{d}.
  	\end{cases}
  	\end{equation}
  	Then the laws of $X_t^{s,x}$ and $Z_t^{s,x}$ are equivalent, i.e.
  	$$P^{X_t^{s,x}}(B)=\tilde{P}^{Z_t^{s,x}}(B), \text{ for all } B\in \mathcal{B}(\mathbb{R}^d),$$
  	where $\frac{d\tilde{P}}{dP}=\mathcal{E}\left(\int_{s}^{t}\sigma^{-1}(r,Z_r^{s,x})b(r,Z_r^{s,x})dW_r\right)$
  \end{lemma}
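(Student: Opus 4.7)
The plan is to apply Girsanov's theorem with Radon--Nikodym density $\mathcal{E}(\int_s^t M_r\, dW_r)$, where $M_r := \sigma^{-1}(r,Z_r^{s,x})\, b(r,Z_r^{s,x})$. The central preliminary step is to verify that $M \in BMO(s,t)$ so that Lemma \ref{BMO implies Lp} yields $\mathcal{E}(\int_s^t M_r\, dW_r) \in L^p$ for some $p>1$; in particular this stochastic exponential is a genuine martingale of mean one. Since Condition \ref{Sigma invertible and bounded} gives $\|\sigma^{-1}\|_\infty \leq K$ for some constant $K<\infty$, the BMO norm reduces to uniformly bounding
$$\sup_{T\in\mathcal{T}_s^t}\Big\|\mathbb{E}\Big[\int_T^t |b(r,Z_r^{s,x})|^2\,dr\,\Big|\,\mathcal{F}_s^T\Big]\Big\|_{L^\infty}.$$
For this I would combine moment estimates on $Z^{s,x}$ over the finite interval $[s,t]$ (which follow from the Lipschitz property and uniform bound at zero of $\sigma$ in Condition \ref{Dissipative}, exactly as in Lemma \ref{Bounded solution}) with the growth on $b$ implicit in dissipativity and $\sup_t |b(t,0)|\leq M$, following the calculation in \cite{FZZ19}.

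With $M\in BMO(s,t)$ in hand, define $\tilde{P}$ on $\mathcal{F}_s^t$ by $d\tilde{P}/dP = \mathcal{E}(\int_s^t M_r\, dW_r)$. Girsanov's theorem then asserts that $\tilde{W}_r := W_r - \int_s^r M_u\, du$, $r\in[s,t]$, is a standard Brownian motion under $\tilde{P}$. Substituting $dW_r = M_r\, dr + d\tilde{W}_r$ into the defining equation \eqref{Diffusion SDE} for $Z^{s,x}$ yields
$$Z_r^{s,x} = x + \int_s^r b(u,Z_u^{s,x})\, du + \int_s^r \sigma(u,Z_u^{s,x})\, d\tilde{W}_u, \qquad r\in[s,t],$$
so under $\tilde{P}$ the process $(Z_r^{s,x})_{r\in[s,t]}$ solves SDE \eqref{SDE} with initial condition $(s,x)$. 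Pathwise uniqueness for \eqref{SDE}, which follows from Condition \ref{Dissipative} by the same argument as in Lemma \ref{Exponential decay}, identifies the law of $Z_t^{s,x}$ under $\tilde{P}$ with that of $X_t^{s,x}$ under $P$. Translating this equality into Radon--Nikodym form gives, for any $B\in\mathcal{B}(\mathbb{R}^d)$,
$$P^{X_t^{s,x}}(B) = \tilde{P}(Z_t^{s,x}\in B) = \mathbb{E}\Big[\mathbf{1}_{\{Z_t^{s,x}\in B\}}\,\mathcal{E}\Big(\int_s^t \sigma^{-1}(r,Z_r^{s,x})\, b(r,Z_r^{s,x})\, dW_r\Big)\Big],$$
which is the desired equivalence with the claimed density.

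The main obstacle is the BMO verification. Condition \ref{Dissipative} does not provide an explicit polynomial bound on $|b(t,x)|$; only dissipativity and a uniform bound at the origin are available. Hence some care is required to dominate $|b(r,Z_r^{s,x})|^2$ in conditional expectation by integrable quantities, uniformly in the stopping time $T$. Everything else in the proof — the change of measure, the Girsanov substitution, and the identification of laws via pathwise uniqueness — is routine, so I expect the work to reduce to an almost verbatim adaptation of the BMO estimate in \cite{FZZ19}.
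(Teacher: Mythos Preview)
Your proposal is correct and follows essentially the same approach as the paper, which proves the lemma by a one-line reference to Lemma~4.1 in \cite{FZZ19}; your outline of the Girsanov argument (density $\mathcal{E}(\int_s^t M_r\,dW_r)$, change of driving Brownian motion, identification of laws via pathwise uniqueness) is precisely what that reference contains, and you rightly isolate the BMO verification as the only nontrivial step and defer it to the same source.
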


 \begin{proof}
 	This lemma can be proved by almost the same proof as them of Lemma 4.1 in \cite{FZZ19}. 
 \end{proof}

  Now we have the following theorem.
  
  \begin{theorem}
  	\label{Existence of density}
  	Assume Conditions \ref{Dissipative} and \ref{Sigma invertible and bounded} hold. If $\alpha>\frac{\beta^2}{2}$, then $P(t,s,x,\cdot)$ and the entrance measure $\rho_t$ are absolutely continuous with respect to the Lebesgue measure $L$ on $(\mathbb{R}^d, \mathcal{B}(\mathbb{R}^d))$, and hence have the densities $p(t,s,x,y)$ and $q(t,y)$ respectively.
  \end{theorem}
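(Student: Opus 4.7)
The plan is to use Girsanov's theorem to reduce the question of a density for $X_t^{s,x}$ to the corresponding question for the driftless process $Z_t^{s,x}$ of \eqref{Diffusion SDE}, and then extend to the entrance measure via Fubini.

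First, I would apply Lemma \ref{Equivalent law} to obtain the equivalence of laws $P^{X_t^{s,x}}\sim \tilde P^{Z_t^{s,x}}$ on each interval $[s,t]$. The hypothesis of that lemma requires the Dol\'eans--Dade exponential built from $M_r:=\sigma^{-1}(r,Z_r^{s,x})b(r,Z_r^{s,x})$ to be a true martingale, which will be provided by Lemma \ref{BMO implies Lp} once I verify $M\in BMO(s,t)$. For this I would use $\|\sigma^{-1}\|_\infty<\infty$ from Condition \ref{Sigma invertible and bounded} together with moment bounds on $Z_r^{s,x}$ over the compact interval $[s,t]$ obtained from It\^o's formula (noting $\|\sigma(r,z)\|\leq\beta|z|+M$), reducing the BMO requirement to a uniform conditional $L^2$ bound on $b(r,Z_r^{s,x})$.

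Next, I would establish that $Z_t^{s,x}$ has a Lebesgue density for every $t>s$. The diffusion matrix $\sigma\sigma^T$ is uniformly elliptic by Condition \ref{Sigma invertible and bounded} and $\sigma$ is Lipschitz with linear growth by Condition \ref{Dissipative}(2)--(3), so standard results on non-degenerate It\^o diffusions (for example Malliavin calculus via the Bouleau--Hirsch criterion, or Aronson-type Gaussian bounds for the fundamental solution of the associated parabolic PDE) give the desired absolute continuity. The equivalence from the previous step then transfers this to $P(t,s,x,\cdot)=\mathcal{L}(X_t^{s,x})$, producing the transition density $p(t,s,x,y)$. For the entrance measure, fix any $s<t$ and combine $\rho_t=P^*(t,s)\rho_s$ with Fubini:
$$\rho_t(\Gamma)=\int_{\mathbb{R}^d}\int_\Gamma p(t,s,x,y)\,dy\,\rho_s(dx)=\int_\Gamma \Bigl[\int_{\mathbb{R}^d}p(t,s,x,y)\rho_s(dx)\Bigr]\,dy,$$
so $q(t,y):=\int_{\mathbb{R}^d}p(t,s,x,y)\rho_s(dx)$ is the density of $\rho_t$.

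The most delicate point is the BMO verification in the first step. Condition \ref{Dissipative} supplies only a monotonicity inequality on $b$ rather than a pointwise growth estimate, so controlling $\mathbb{E}[|b(r,Z_r^{s,x})|^2\mid\mathcal{F}_s^T]$ uniformly in the stopping time $T$ and in the initial data will most likely require a localisation argument exploiting the boundedness of $b(\cdot,0)$ (Condition \ref{Dissipative}(3)) combined with finite-interval moment bounds on $|Z_r^{s,x}|$; this is essentially the same technicality handled in \cite{FZZ19}, and I expect to follow that line.
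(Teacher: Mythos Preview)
Your overall architecture --- Girsanov to pass from $X$ to the driftless $Z$, absolute continuity of the law of $Z_t^{s,x}$, then Fubini for $\rho_t$ --- is the paper's. Two points of divergence deserve comment.

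First, the role of Lemma~\ref{BMO implies Lp}. The paper takes Lemma~\ref{Equivalent law} as already established (its proof is delegated to \cite{FZZ19}), so the identity $P(X_t^{s,x}\in\Gamma)=\mathbb{E}\bigl[\mathcal{E}(\int_s^t M_r\,dW_r)\,1_\Gamma(Z_t^{s,x})\bigr]$ is the starting point rather than something whose hypotheses you must verify. The BMO lemma is used not to justify Girsanov but to feed into a H\"older estimate: after localizing with $T_n=\inf\{r\geq s:|Z_r^{s,x}|\geq n\}$, the truncated integrand $a_n(r)=1_{[s,T_n]}(r)M_r$ is bounded (since $b$ is continuous on the compact set $[s,t]\times\bar B_n(0)$) and hence trivially in BMO, so $\mathcal{E}(\int a_n\,dW)\in L^p$ for some $p>1$, and H\"older gives $P(X_t^{s,x}\in\Gamma)\leq\liminf_n\gamma_n\bigl(\mathbb{E}[1_\Gamma(Z_t^{s,x})]\bigr)^{1/q}$. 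Your plan to put the unlocalized $M$ into BMO directly cannot succeed: Condition~\ref{Dissipative} imposes no pointwise growth bound on $b$ (only monotonicity and boundedness at $0$), so $|b(r,Z_r)|^2$ need have no finite conditional expectation. You correctly flag that localization is required; the stopping-time scheme above is the way to implement it.

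Second, for the density of $Z_t^{s,x}$ the paper avoids Malliavin calculus and Aronson estimates --- both of which typically demand differentiability or H\"older-in-time boundedness of $\sigma$ beyond what Conditions~\ref{Dissipative} and~\ref{Sigma invertible and bounded} supply. Instead it represents $\int_s^t\sigma(r,Z_r)\,dW_r$ as a time-changed Brownian motion $\tilde W_{\hat\sigma_t}$ and uses $\|\sigma(t,x)\|\geq\sqrt d/\sup\|\sigma^{-1}\|=:\underline\sigma$ to bound the clock $\hat\sigma_t$ below by a positive constant; the explicit Gaussian kernel of $\tilde W$ then gives $\mathbb{E}[1_\Gamma(Z_t^{s,x})]\leq C\,L(\Gamma)$ directly. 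This elementary route uses only the hypotheses already in force, whereas your appeal to ``standard results'' would likely force you to import extra regularity assumptions not present in the theorem statement.
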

  
  \begin{proof}
  	First we prove that $P(t,s,x,\cdot)$ is absolutely continuous with respect to $L$, i.e. for any $\Gamma\in \mathcal{B}(\mathbb{R}^d)$, $L(\Gamma)=0$ implies $P(t,s,x,\Gamma)=P(X_t^{s,x}\in \Gamma)=0$. By Lemma \ref{Equivalent law}, we know that
  	\begin{equation}
  	\label{equation of Girsonov}
  	\begin{split}
  	P(X_t^{s,x}\in \Gamma)&=\tilde{P}(Z_t^{s,x}\in \Gamma)
  	       =\mathbb{E}_{\tilde{P}}[1_{\Gamma}(Z_t^{s,x})]\\
           &=\mathbb{E}\left[\mathcal{E}\left(\int_{s}^{t}\sigma^{-1}(r,Z_r^{s,x})b(r,Z_r^{s,x})dW_r\right)1_{\Gamma}(Z_t^{s,x})\right],
  	\end{split}
  	\end{equation}
  	where $Z_t^{s,x}$ is the solution of SDE (\ref{Diffusion SDE}). Set $T_n:=\inf_{t\geq s}\{|Z_t^{s,x}|\geq n\}$. Since $\mathbb{E}_{\tilde{P}}[\sup_{r\in [s,t]}|Z_r^{s,x}|^2]<\infty$, then we have
  	$$\tilde{P}(T_n>t)=\tilde{P}(\sup_{r\in [s,t]}|Z_r^{s,x}|\leq n)\rightarrow 1 \text{ as } n\rightarrow \infty.$$
  	Thus
  	\begin{equation}
  	\begin{split}
  	P(X_t^{s,x}\in \Gamma)&=\mathbb{E}_{\tilde{P}}[1_{\Gamma}(Z_t^{s,x})]\\
  	&=\mathbb{E}_{\tilde{P}}[1_{\Gamma}(Z_t^{s,x})1_{[s,T_n]}(t)]+\mathbb{E}_{\tilde{P}}[1_{\Gamma}(Z_t^{s,x})1_{(T_n, \infty)}(t)]\\
  	&\leq \lim_{n\rightarrow \infty}[\mathbb{E}_{\tilde{P}}[1_{\Gamma}(Z_t^{s,x})1_{[s,T_n]}(t)]+\tilde{P}(T_n<t)]\\
  	&=\lim_{n\rightarrow \infty}\mathbb{E}_{\tilde{P}}[1_{\Gamma}(Z_t^{s,x})1_{[s,T_n]}(t)]\\
  	&=\lim_{n\rightarrow \infty}\mathbb{E}\left[1_{[s,T_n]}(t)\mathcal{E}\left(\int_{s}^{t}\sigma^{-1}(r,Z_r^{s,x})b(r,Z_r^{s,x})dW_r\right)1_{\Gamma}(Z_t^{s,x})\right].\\
  	\end{split}
  	\end{equation}
  	Since 
  	$$1_{[s,T_n]}(t)\mathcal{E}\left(\int_{s}^{t}\sigma^{-1}(r,Z_r^{s,x})b(r,Z_r^{s,x})dW_r\right) \leq \mathcal{E}\left(\int_{s}^{t}1_{[s,T_n]}(r)\sigma^{-1}(r,Z_r^{s,x})b(r,Z_r^{s,x})dW_r\right),$$
  	we have
  	\begin{equation}
  	\begin{split}
  	P(X_t^{s,x}\in \Gamma)
  	&\leq \liminf_{n\rightarrow \infty}\mathbb{E}\left[\mathcal{E}\left(\int_{s}^{t}1_{[s,T_n]}(r)\sigma^{-1}(r,Z_r^{s,x})b(r,Z_r^{s,x})dW_r\right)1_{\Gamma}(Z_t^{s,x})\right].
  	\end{split}
  	\end{equation}
  	We only need to prove that if $L(\Gamma)=0$, then for all $n$
  	$$\mathbb{E}\left[\mathcal{E}\left(\int_{s}^{t}1_{[s,T_n]}(r)\sigma^{-1}(r,Z_r^{s,x})b(r,Z_r^{s,x})dW_r\right)1_{\Gamma}(Z_t^{s,x})\right]=0.$$
  	Let $a_n(r)=1_{[s,T_n]}(r)\sigma^{-1}(r,Z_r^{s,x})b(r,Z_r^{s,x})$. By Condition \ref{Sigma invertible and bounded}, we know that there exists $C>0$ such that $\sup_{r\in \mathbb{R}}|a_n(r)|\leq C$. Then 
  	$$\sup_{T\in \mathcal{T}_s^t}\left\|\left(\mathbb{E}\left[\int_{T}^{t}|a_n(r)|^2dr|\mathcal{F}_s^T\right]\right)^{\frac{1}{2}}\right\|_{L^{\infty}}\leq C\sqrt{t-s},$$
  	which means $a_n \in BMO(s,t)$. By Lemma \ref{BMO implies Lp}, there exists $p>1$ such that $$\gamma_n:=\left(\mathbb{E}\left[\left(\mathcal{E}\left(\int_{s}^{t}a_n(r)dW_r\right)\right)^p\right]\right)^{\frac{1}{p}}<\infty.$$
  	Since $Z_t^{s,x}=x+\int_{s}^{t}\sigma(r,Z_r^{s,x})dW_r$, note that $\int_{s}^{t}\sigma(r,Z_r^{s,x})dW_r$ is in law a Brownian motion with time $\hat{\sigma}_t=\int_{s}^{t}\|\sigma(r,Z_r^{s,x})\|^2dr$, i.e. there exists a standard Brownian motion $\tilde{W}$ such that $\int_{s}^{t}\sigma(r,Z_r^{s,x})dW_r\stackrel{d}{=}\tilde{W}_{\hat{\sigma}_t}$. Also notice
  	$$\sqrt{d}=\|\sigma(t,x)\sigma^{-1}(t,x)\|\leq \|\sigma(t,x)\| \|\sigma^{-1}(t,x)\|,$$
  	thus
  	$$\|\sigma(t,x)\|\geq \frac{\sqrt{d}}{\|\sigma^{-1}(t,x)\|}\geq \frac{\sqrt{d}}{\sup_{t\in \mathbb{R}, x\in \mathbb{R}^d}\|\sigma^{-1}(t,x)\|}=:\underline{\sigma},$$
  	which suggests that $\hat{\sigma}_t\geq \underline{\sigma}(t-s)$. Using Proposition 6.17 in Chapter 2 in \cite{Karatzas91}, we have
  	\begin{equation}
  	\begin{split}
  	\mathbb{E}\left[1_{\Gamma}(Z_t^{s,x})\right]
  	        &=\mathbb{E}\left[1_{\Gamma}(x+\tilde{W}_{\hat{\sigma_t}})\right]\\
  		    &=\mathbb{E}\left[\mathbb{E}\left[1_{\Gamma}(x+\tilde{W}_{\hat{\sigma_t}})|\mathcal{F}_{\hat{\sigma}_t-\underline{\sigma}(t-s)}\right]\right]\\
  		    &=\mathbb{E}\left[\mathbb{E}\left[1_{\Gamma}(x+y+\tilde{W}_{\underline{\sigma}(t-s)})\right]\left|_{y=\tilde{W}_{\hat{\sigma}_t-\underline{\sigma}(t-s)}}\right.\right].
  	\end{split}
  	\end{equation}
  	Note
  	\begin{equation*}
  	\begin{split}
  	\mathbb{E}\left[1_{\Gamma}(x+y+\tilde{W}_{\underline{\sigma}(t-s)})\right]
  	   &=\frac{1}{(2\pi\underline{\sigma}(t-s))^{d/2}|\det\Sigma|^{1/2}}\int_{\mathbb{R}^d}1_{\Gamma}(x+y+z)e^{-(1/2\underline{\sigma}(t-s))|\Sigma^{-1/2} z|^2}dz\\
  	   &\leq \frac{1}{(2\pi\underline{\sigma}(t-s))^{d/2}|\det\Sigma|^{1/2}}L(\Gamma),
  	\end{split}
  	\end{equation*}
  	where $W_1\sim \mathcal{N}(0, \Sigma)$. Then
  	$$\mathbb{E}\left[1_{\Gamma}(Z_t^{s,x})\right]\leq \frac{1}{(2\pi\underline{\sigma}(t-s))^{d/2}|\det\Sigma|^{1/2}}L(\Gamma).$$
  	Let $q$ be the dual number of $p$. Then by Cauchy-Schwarz inequality, 
  	\begin{equation}
  	\begin{split}
  	\mathbb{E}\left[\mathcal{E}\left(\int_{s}^{t}1_{[s,T_n]}(r)\sigma^{-1}(r,Z_r^{s,x})b(r,Z_r^{s,x})dW_r\right)1_{\Gamma}(Z_t^{s,x})\right]
  	&\leq \gamma_n \{\mathbb{E}[1_{\Gamma}(Z_t^{s,x})]\}^{\frac{1}{q}}\\
  	&\leq C_n\cdot L(\Gamma)^{\frac{1}{q}},
  	\end{split}
  	\end{equation}
  	where $C_n=\gamma_n\cdot \left(\frac{1}{(2\pi\underline{\sigma}(t-s))^{d/2}|\det\Sigma|^{1/2}}\right)^{\frac{1}{q}}$. 
  	
  	So if $L(\Gamma)=0$, then $\mathbb{E}\left[\mathcal{E}\left(\int_{s}^{t}1_{[s,T_n]}(r)\sigma^{-1}(r,Z_r^{s,x})b(r,Z_r^{s,x})dW_r\right)1_{\Gamma}(Z_t^{s,x})\right]=0$, and hence $P(t,s,x,\Gamma)=P(X_t^{s,x}\in \Gamma)=0$. Thus $P(t,s,x,\cdot)$ is absolutely continuous with respect to the Lebesgue measure and by Radon-Nikodym theorem, the density of $P(t,s,x,\cdot)$ with respect to the Lebesgue measure exists.
    
    For the entrance measure $\rho_t$, since 
    \begin{equation}
      \label{Entrance measure of markov kernel}
    	\rho_t(\Gamma)=P^*(t,s)\rho_s(\Gamma)=\int_{\mathbb{R}^d}P(t,s,x,\Gamma)\rho_s(dx),
    \end{equation}
    then if $L(\Gamma)=0$, we have $\rho_t(\Gamma)=0$. This also suggests that $\rho_t$ is absolutely continuous with respect to $L$ and thus its density exists.
  \end{proof}
  
  We already know the conditions to guarantee the existence of the densities $p(t, s, x, y)$ and $q(t,y)$ of the two- parameter Markov transition kernel $P(t,s,x,\cdot)$ and entrance measure $\rho_t$ respectively. By Fubini theorem, we know that
  $$\rho_t(\Gamma)=\int_{\mathbb{R}^d}P(t,s,x,\Gamma)\rho_s(dx)=\int_{\Gamma}\int_{\mathbb{R}^d}p(t,s,x,y)\rho_s(dx)dy=\int_{\Gamma}\int_{\mathbb{R}^d}p(t,s,x,y)q(s,x)(dx)dy.$$
  Then it is obvious that
  \begin{equation}
  	\label{Equation of entrance measure density}
  	q(t,y)=\int_{\mathbb{R}^d}p(t,s,x,y)q(s,x)(dx).
  \end{equation}
  In addition, we give the following condition:
  \begin{condition}
  	\label{Smooth of coefficients}
  	Assume $b=(b_i)_{i=1}^d$, $\sigma=(\sigma_{ij})_{i,j=1}^d$ in SDE \eqref{SDE} satisfy the following conditions:
  	\begin{description}
  		\item[(1)] The functions $b(t,x), \sigma(t,x)$ are globally bounded and uniformly H\"{o}lder-continuous in $(t,x)$.
  		\item[(2)] The functions $b(t, \cdot)\in C^1(\mathbb{R}^d; \mathbb{R}^d)$, $\sigma(t,\cdot)\in C^2(\mathbb{R}^d; \mathbb{R}^{d\times d})$ such that $\partial_{x_i}b_i, \partial_{x_ix_j}^2\sigma_{ij}$ are bounded and H\"{o}lder-continuous.
  	\end{description}
  \end{condition}
  Then it is well-known that (see \cite{Friedman1964,Friedman1975,Karatzas91} for more details) under Condition \ref{Smooth of coefficients}, $p(\cdot,s,x,\cdot)\in C^{1,2}(\mathbb{R}\times \mathbb{R}^d)$ satisfies the following Fokker-Planck equation
  \begin{equation}
  	\label{Fokker-Planck equation}
  	\partial_tp(t,s,x,y)=\mathcal{L}^*(t)p(t,s,x,y), \ t\geq s,
  \end{equation}
  with initial conditions $p(s,s,x,y)=\delta_x(y)$, where $\mathcal{L}^*(t)p$ is the Fokker-Planck operator given by 
  \begin{equation}
  \label{Fokker-Planck operator}
  	\mathcal{L}^*(t)p=-\sum_{i=1}^d\partial_{x_i}(b_i(t,y)p)+\frac{1}{2}\sum_{i,j=1}^d\partial_{x_ix_j}^2 \left(\sigma\sigma^T_{ij}(t,y)p\right).
  \end{equation}
  
  Now we have the following theorem.
  \begin{theorem}
  	\label{Theorem Fokker-Planck}
  	Assume Conditions \ref{Dissipative}, \ref{Sigma invertible and bounded}, \ref{Smooth of coefficients} hold and $\alpha>\frac{\beta^2}{2}$. Let $q \in C_+^{1,2}(\mathbb{R}\times \mathbb{R}^d)\bigcap L^1(\mathbb{R}^d)$ with $\|q(t,\cdot)\|_{L^1(\mathbb{R}^d)}=1$ for all $t$, and define $\rho: \mathbb{R}\rightarrow \mathcal{P}(\mathbb{R}^d)$ by
  	$$\rho_t(\Gamma)=\int_{\Gamma}q(t,y)dy, \text{ for all } t\in \mathbb{R}.$$
  	Then $\rho$ is an entrance measure if and only if $q$ satisfies the infinite horizon Fokker-Planck equation problem:
  	\begin{equation}
  		\label{Equation of Fokker-Planck}
		\partial_tq=\mathcal{L}^*(t)q, \ t\geq s
	\end{equation}
	for any $s\in\mathbb{R}$, and the additional condition
	\begin{equation}
		\label{norm of q is 1}
		\|q(t,\cdot)\|_{L^1(\mathbb{R}^d)}=1, \ t\in \mathbb{R},
	\end{equation}
	and the solution to infinite horizon Fokker-Planck equation problem is unique. Hence the solution of (\ref{Equation of Fokker-Planck}) and the entrance measure have one to one correspondence.
  \end{theorem}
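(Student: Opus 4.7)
The plan is to build the equivalence on the Chapman--Kolmogorov identity (\ref{Equation of entrance measure density}), $q(t,y)=\int_{\mathbb{R}^d}p(t,s,x,y)q(s,x)dx$, which holds automatically whenever $\rho$ is an entrance measure with density $q$. Under Condition \ref{Smooth of coefficients} the classical Gaussian-type bounds on $p$ and on $\partial_tp,\partial_{y_i}p,\partial^2_{y_iy_j}p$ (see \cite{Friedman1964,Friedman1975}) allow the derivatives to be pushed inside $\int\cdots dx$, so differentiating in $t$ and applying (\ref{Fokker-Planck equation}) gives
$$\partial_tq(t,y)=\int_{\mathbb{R}^d}\mathcal{L}^*(t)p(t,s,x,y)q(s,x)dx=\mathcal{L}^*(t)q(t,y),\qquad t\geq s,$$
and since $s\in\mathbb{R}$ is arbitrary, $q$ satisfies (\ref{Equation of Fokker-Planck}) for every starting time; the normalization (\ref{norm of q is 1}) is part of the hypothesis on $\rho$.

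For the converse, fix $s\in\mathbb{R}$ and set $\tilde q(t,y):=\int_{\mathbb{R}^d}p(t,s,x,y)q(s,x)dx$ for $t\geq s$. Using $p(s,s,x,\cdot)=\delta_x$ and $\int p(t,s,x,y)dy=1$ together with Fubini, one checks $\tilde q(s,\cdot)=q(s,\cdot)$ and $\|\tilde q(t,\cdot)\|_{L^1}=1$, and the same calculation as above shows $\partial_t\tilde q=\mathcal{L}^*(t)\tilde q$ on $(s,\infty)\times\mathbb{R}^d$. Thus $q$ and $\tilde q$ solve the same forward Cauchy problem with the same nonnegative $L^1$-normalised initial data at time $s$. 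Classical uniqueness for this non-degenerate parabolic Cauchy problem under the regularity in Condition \ref{Smooth of coefficients} forces $q\equiv\tilde q$ on $[s,\infty)\times\mathbb{R}^d$, i.e.\ $P^*(t,s)\rho_s=\rho_t$, so $\rho$ is an entrance measure.

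For uniqueness of $q$ in the infinite-horizon problem, let $q_1,q_2$ be two solutions; the forward direction produces entrance measures $\rho^1,\rho^2$ with densities $q_i$. The strategy is to imitate the uniqueness argument in the proof of Theorem \ref{Existence and uniqueness of entrance measure}: for any open $\mathcal{O}\subset\mathbb{R}^d$ and $\delta>0$, using Lemma \ref{Exponential decay} with $p=2$ (permitted by $\alpha>\beta^2/2$),
$$\rho^1_t(\mathcal{O})-\rho^2_t(\mathcal{O})\leq\rho^1_t(\mathcal{O}\setminus\mathcal{O}_\delta)+\frac{C}{\delta^2}e^{-2(\alpha-\beta^2/2)(t-s)}\int_{\mathbb{R}^d}(1+|x|^2)q_2(s,x)dx,$$
then letting $s\to-\infty$ and $\delta\downarrow 0$ and invoking Lemma \ref{Two measure are the same}. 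The principal obstacle is the second-moment bound $\sup_{s}\int(1+|x|^2)q_i(s,x)dx<\infty$, which is not automatic from $\|q_i(s,\cdot)\|_{L^1}=1$; it is recovered by multiplying (\ref{Equation of Fokker-Planck}) by $|y|^2$, integrating by parts (the boundedness of $b,\sigma$ in Condition \ref{Smooth of coefficients} kills the boundary terms), and running a Gronwall estimate on $\int|y|^2q_i(t,y)dy$ whose negative linear term is supplied by the dissipativity in Condition \ref{Dissipative}. Once $\rho^i\in\mathcal{M}^2$ is secured, the estimate above yields $\rho^1(\mathcal{O})\leq\rho^2(\mathcal{O})$ at every $t$, symmetry gives $\rho^1=\rho^2$, and hence $q_1\equiv q_2$.
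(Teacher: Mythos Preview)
Your forward direction is identical to the paper's: differentiate the Chapman--Kolmogorov relation (\ref{Equation of entrance measure density}) in $t$, push the derivative inside, and use (\ref{Fokker-Planck equation}) for $p$. For the converse, the paper simply asserts that any $C^{1,2}_+$ solution of (\ref{Equation of Fokker-Planck}) automatically satisfies the representation (\ref{Equation of entrance measure density}) ``from the heat kernel'', and then applies Fubini; you instead make this step explicit by setting $\tilde q(t,y)=\int p(t,s,x,y)q(s,x)\,dx$ and invoking uniqueness for the parabolic Cauchy problem to force $q=\tilde q$. These are the same argument at different levels of detail---the representation formula the paper quotes is precisely what classical Cauchy-problem uniqueness delivers.

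The genuine difference is in the uniqueness claim. The paper dispatches it in one line by appealing to the uniqueness of the entrance measure (Theorem \ref{Existence and uniqueness of entrance measure}), whereas you unpack that argument and correctly flag that Theorem \ref{Existence and uniqueness of entrance measure} only gives uniqueness inside $\mathcal{M}^p$, so one must first check $\sup_s\int|x|^2 q_i(s,x)\,dx<\infty$. The paper does not address this point at all; your Gronwall sketch (multiply (\ref{Equation of Fokker-Planck}) by $|y|^2$, integrate by parts, use the dissipativity of $b$ and the boundedness of $\sigma$) is the right idea, but note that the resulting differential inequality $m'(t)\le -\alpha m(t)+C$ only yields a uniform bound once you know $m(s)<\infty$ for \emph{some} $s$---so a cutoff/truncation argument is needed to bootstrap finiteness before Gronwall can close. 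This is a technical wrinkle rather than a gap in strategy, and in any case you are supplying justification the paper omits.
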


  \begin{proof}
  	Assume first that $\rho$ is an entrance measure. We already know that $p,q$ satisfy (\ref{Equation of entrance measure density}) and $p(t,s,x,y)$ satisfies Fokker-Planck equation (\ref{Fokker-Planck equation}). We take the derivative with respect to t on both sides of (\ref{Equation of entrance measure density}) to have
  	\begin{equation}
  	\begin{split}
  	\partial_tq(t,x)=&\int_{\mathbb{R}^d}\partial_tp(t,s,y,x)q(s,y)dy\\
  	                     =&\int_{\mathbb{R}^d}\mathcal{L}^*(t)p(t,s,y,x)q(s,y)dy\\
  	                     =&\int_{\mathbb{R}^d}\bigg(-\sum_{i=1}^d\partial_{x_i}(b_i(t,x)p(t,s,y,x))q(s,y)\bigg)dy\\
  	                       &+\int_{\mathbb{R}^d}\frac{1}{2}\sum_{i,j=1}^d\partial_{x_ix_j}^2 \left(\sigma\sigma^T_{ij}(t,x)p(t,s,y,x)\right)q(s,y)dy\\
  	                    =:& I+\uppercase\expandafter{\romannumeral2}.
  	\end{split}		
  	\end{equation}
  	For the first part, we have
  	\begin{equation}
  	\begin{split}
  	    I&=-\sum_{i=1}^d\int_{\mathbb{R}^d}[\partial_{x_i}(b_i(t,x))p(t,s,y,x) +b_i(t,x)\partial_{x_i}(p(t,s,y,x))]q(s,y)dy\\
  	     &=-\sum_{i=1}^d\partial_{x_i}(b_i(t,x))\int_{\mathbb{R}^d}p(t,s,y,x)q(s,y)dy -\sum_{i=1}^d b_i(t,x)\partial_{x_i}\int_{\mathbb{R}^d}p(t,s,y,x)q(s,y)dy\\
  	     &=-\sum_{i=1}^d\partial_{x_i}(b_i(t,x))q(t,x) -\sum_{i=1}^d b_i(t,x)\partial_{x_i}q(t,x)\\
  	     &=-\sum_{i=1}^d\partial_{x_i}(b_i(t,x)q(t,x)).
  	\end{split}		
  	\end{equation}
  	Similarly, for the second part, we have
  	$$II=\frac{1}{2}\sum_{i,j=1}^d\partial_{x_ix_j}^2 \left(\sigma\sigma^T_{ij}(t,x)q(t,x)\right).$$
  	Hence the density function $q(t,x)$ of entrance measure $\rho_t$ satisfies
  	$$\partial_tq=\mathcal{L}^*(t)q.$$
  	
  	Conversely, if $q$ is the solution of \eqref{Equation of Fokker-Planck} satisfying \eqref{norm of q is 1}. First from the heat kernel $p(t,s,x,y)$ of Fokker-Planck equation, we have \eqref{Equation of entrance measure density}. Then by Fubini's theorem, we have for all $\Gamma\in \mathcal{B}(\mathbb{R}^d)$, any $s\in\mathbb{R}$ and $t\geq s$
  	\begin{equation*}
  	\begin{split}
  		P^*(t,s)\rho_s(\Gamma)&=\int_{\mathbb{R}^d}P(t,s,y,\Gamma)\rho_s(dy)\\
  	        &=\int_{\mathbb{R}^d}\int_{\Gamma}p(t,s,y,x)dxq(s,y)dy\\
  	        &=\int_{\Gamma}\int_{\mathbb{R}^d}p(t,s,y,x)q(s,y)dydx\\
  	        &=\int_{\Gamma}q(t,x)dx\\
  	        &=\rho_t(\Gamma)
  	\end{split}
  	\end{equation*}
  	which means $\rho$ is an entrance measure. With the correspondence of the periodic measure and the solution of infinite horizon Fokker-Planck equation, since the entrance measure is unique in this theorem, we know that the solution of the infinite horizon Fokker-Planck equation problem is unique.
  \end{proof}
 
  Now assume that $u^r(t,s,x)$ and $K^{r_1,r_2}(t,s,x)$ are the solutions of equation (\ref{Solution u_r}) and (\ref{Solution K_r_1,r_2}) respectively, and the corresponding semi-groups $P^r, P^{r_1,r_2}$ defined as 
  \begin{equation}
  \begin{cases}
  P^r(t,s,x,\Gamma):=P(u^r(t,s,x)\in\Gamma)\\
  P^{r_1.r_2}(t,s,x,\Gamma):=P(K^{r_1,r_2}(t,s,x)\in\Gamma).
  \end{cases}
  \end{equation}
  We can also define $P^{r,*}(t,s) \ (resp. \ P^{r_1,r_2,*}(t,s))$ as in (\ref{Define measure transition P^*}) when we replace $\{P^*(t,s), P(t,s,x,\Gamma)\}$ by $\{P^{r,*}(t,s), P^r(t,s,x,\Gamma)\} \ (resp. \ \{P^{r_1,r_2,*}(t,s), P^{r_1.r_2}(t,s,x,\Gamma)\})$.
  Let $\varphi^r(t), \varphi^{r_1,r_2}(t)$ be defined as in (\ref{L2-lim K u}), and $\rho^r_t, \rho^{r_1,r_2}_t$ be the laws of $\varphi^r(t), \varphi^{r_1,r_2}(t)$ respectively. Then we have 
  $$P^{r,*}(t,s)\rho^r_s=\rho^r_t, \ P^{r_1,r_2,*}(t,s)\rho^{r_1,r_2}_s=\rho^{r_1,r_2}_t.$$
  Similar to Condition \ref{Sigma invertible and bounded} and \ref{Smooth of coefficients}, we give the following condition.  
  \begin{condition}
  	\label{Quasi sigma invertible and bounded}
  	The functions $\tilde{b}=(\tilde{b}_i)_{i=1}^d, \tilde{\sigma}=(\tilde{\sigma}_{ij})_{i,j=1}^d$ in Condition \ref{Quasi-periodic condition} satisfy the following conditions: 
  	\begin{description}
  		\item[(1)] The functions $\tilde{b}(t,s,x), \tilde{\sigma}(t,s,x)$ are globally bounded and uniformly H\"{o}lder-continuous in $(t,s,x)$.
  		\item[(2)] The functions $\tilde{b}(t, s, \cdot)\in C^1(\mathbb{R}^d; \mathbb{R}^d)$, $\tilde{\sigma}(t,s,\cdot)\in C^2(\mathbb{R}^d; \mathbb{R}^{d\times d})$ such that $\partial_{x_i}\tilde{b}_i, \partial_{x_ix_j}^2\tilde{\sigma}_{ij}$ are bounded and H\"{o}lder-continuous.
  		\item[(3)] The function $\tilde{\sigma}(t,s,x)$ is invertible with $\sup_{t,s\in \mathbb{R}, x\in \mathbb{R}^d}\|\tilde{\sigma}^{-1}(t,s,x)\|<\infty$.
  	\end{description}
  \end{condition}
  Then by Theorem \ref{Existence and uniqueness of entrance measure} and Theorem \ref{Existence of density}, we can directly deduce the following theorem
  \begin{theorem}
  	Assume Conditions \ref{Quasi-periodic condition}, \ref{Quasi-dissipative} and \ref{Quasi sigma invertible and bounded} hold. If $\alpha>\frac{\beta^2}{2}$, then $\rho^r, \rho^{r_1,r_2}$ are the entrance measures of equation (\ref{Solution u_r}) and (\ref{Solution K_r_1,r_2}) respectively. Moreover $P^r(t,s,x,\cdot), P^{r_1,r_2}(t,s,x,\cdot)$ and the entrance measures $\rho^r_t, \rho^{r_1,r_2}_t$ are absolutely continuous with respect to the Lebesgue measure $L$ on $(\mathbb{R}^d, \mathcal{B}(\mathbb{R}^d))$, and hence have the density $p^r(t,s,x,y)$, $p^{r_1,r_2}(t,s,x,y)$, $q^r(t,y)$, $q^{r_1,r_2}(t,y)$ respectively.
  \end{theorem}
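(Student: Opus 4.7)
The plan is to verify that the reparameterised SDEs \eqref{Solution u_r} and \eqref{Solution K_r_1,r_2} inherit all the structural hypotheses needed to apply Theorem \ref{Existence and uniqueness of entrance measure} (for existence and uniqueness of the entrance measure) and Theorem \ref{Existence of density} (for absolute continuity of the transition kernel and the entrance measure), uniformly in the parameters $r$, $r_1$, $r_2$. The crucial observation is that $\tilde b^{r_1,r_2}(t,x)=\tilde b(t+r_1,t+r_2,x)$ and $\tilde\sigma^{r_1,r_2}(t,x)=\tilde\sigma(t+r_1,t+r_2,x)$ depend on $(r_1,r_2)$ only through shifts in the time-like arguments, while every bound appearing in Conditions \ref{Quasi-dissipative} and \ref{Quasi sigma invertible and bounded} is taken with $\sup$ over all time variables. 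Hence all constants are independent of $(r_1,r_2)$.

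First I would check Condition \ref{Dissipative} for the pairs $(\tilde b^{r,r},\tilde\sigma^{r,r})$ and $(\tilde b^{r_1,r_2},\tilde\sigma^{r_1,r_2})$. Items (1)--(3) of Condition \ref{Quasi-dissipative} immediately yield the dissipativity, Lipschitz and boundedness bounds with the same constants $\alpha,\beta,M$ (this is already noted just before Theorem \ref{Existence and uniqueness of quasi-periodic random path}). Combined with $\alpha>\beta^2/2$, Theorem \ref{Existence and uniqueness of entrance measure} applied with $p=2$ produces, for every fixed $r$ (resp.\ $(r_1,r_2)$), a unique entrance measure in $\mathcal M^2$ for the semi-group $P^{r,*}$ (resp.\ $P^{r_1,r_2,*}$). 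Inspecting the existence part of that theorem, the entrance measure is constructed as the law of the $L^2$-limit in \eqref{1220-0}, which for the reparameterised equations coincides precisely with the law of $\varphi^r(t)$ (resp.\ $\varphi^{r_1,r_2}(t)$) defined in \eqref{L2-lim K u}. Therefore $\rho^r_t=\mathcal L(\varphi^r(t))$ and $\rho^{r_1,r_2}_t=\mathcal L(\varphi^{r_1,r_2}(t))$ are the unique entrance measures of \eqref{Solution u_r} and \eqref{Solution K_r_1,r_2}.

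For the absolute continuity statement I would verify Condition \ref{Sigma invertible and bounded} for each reparameterised SDE. This reduces to the uniform bound
\[
\sup_{t\in\mathbb R,\,x\in\mathbb R^d}\bigl\|(\tilde\sigma^{r_1,r_2})^{-1}(t,x)\bigr\|
\;\le\;\sup_{t,s\in\mathbb R,\,x\in\mathbb R^d}\bigl\|\tilde\sigma^{-1}(t,s,x)\bigr\|\;<\;\infty,
\]
which is exactly item (3) of Condition \ref{Quasi sigma invertible and bounded}; the same argument treats $\tilde\sigma^{r,r}$. Hence Theorem \ref{Existence of density} applies to both families and delivers densities $p^r(t,s,x,y)$ and $p^{r_1,r_2}(t,s,x,y)$ for the transition kernels with respect to Lebesgue measure. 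Absolute continuity of the entrance measures $\rho^r_t$ and $\rho^{r_1,r_2}_t$ then follows from the analogue of \eqref{Entrance measure of markov kernel}, yielding densities $q^r(t,y)$ and $q^{r_1,r_2}(t,y)$.

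There is really no substantive obstacle: the statement is a direct transfer of Theorems \ref{Existence and uniqueness of entrance measure} and \ref{Existence of density} from the single SDE \eqref{SDE} to the parameter-indexed families \eqref{Solution u_r} and \eqref{Solution K_r_1,r_2}. The only point requiring mild care is checking that the structural constants $\alpha,\beta,M$ and the invertibility bound for $\tilde\sigma^{-1}$ are genuinely uniform in the parameters $(r_1,r_2)$; but this is immediate from the definition of $\tilde b^{r_1,r_2}$ and $\tilde\sigma^{r_1,r_2}$ as time-translations together with the $\sup_{t,s\in\mathbb R}$ taken in every inequality of Conditions \ref{Quasi-dissipative} and \ref{Quasi sigma invertible and bounded}.
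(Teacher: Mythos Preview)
Your proposal is correct and follows exactly the paper's approach: the paper simply states that the result is directly deduced from Theorem \ref{Existence and uniqueness of entrance measure} and Theorem \ref{Existence of density}, and you have spelled out the routine verification that the reparameterised coefficients $\tilde b^{r_1,r_2},\tilde\sigma^{r_1,r_2}$ satisfy Conditions \ref{Dissipative} and \ref{Sigma invertible and bounded} uniformly in the parameters. There is nothing to add.
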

  Similarly, we know that 
  $$q^r(t,x)=\int_{\mathbb{R}^d}p^r(t,s,y,x)q^r(s,y)(dy)$$
  and
  $$q^{r_1,r_2}(t,x)=\int_{\mathbb{R}^d}p^{r_1,r_2}(t,s,y,x)q^{r_1,r_2}(s,y)(dy).$$
  Moreover, $q^{r_1,r_2}$ (resp. $q^r$) satisfies the following quasi-periodic Fokker-Planck equation problem:
  $$\partial_tq^{r_1,r_2}=\mathcal{L}^{r_1,r_2,*}(t)q^{r_1,r_2}, t\geq s \ \ (resp.\ \partial_tq^r=\mathcal{L}^{r,*}(t)q^r, \ t\geq s)$$
  where $\mathcal{L}^{r_1,r_2,*}$ (resp. $\mathcal{L}^{r,*}$) is given in (\ref{Fokker-Planck operator}) where $b,\sigma$ is replaced by $\tilde{b}^{r_1,r_2}, \tilde{\sigma}^{r_1,r_2}$ (resp. $\tilde{b}^r, \tilde{\sigma}^r$).
  
  By the proof of Theorem \ref{Existence and uniqueness of quasi-periodic random path}, we know that $u^r(t,s,x,\cdot)=u(t+r,s+r,x,\theta_{-r}\cdot)$ and $\varphi^{r}(t,\cdot)=\varphi(t+r,\theta_{-r}\cdot)$. Since $\theta_{-r}$ preserves the probability measure $P$, then $P^r(t,s,x,\cdot)=P(t+r,s+r,x,\cdot)$ and $\rho^r_t=\rho_{t+r}$. Hence their densities have the following relations
  $$p^r(t,s,x,y)=p(t+r,s+r,x,y), \quad q^r(t,x)=q(t+r,x).$$
  \section*{Acknowledgements}

We are grateful to the anonymous referee for their constructive comments which lead to significant improvements of this paper. 
  We would like to thank Kening Lu and Hans Crauel for raising our interests to consider random quasi-periodicity in various occasions. 
    We acknowledge the financial support of a Royal Society Newton Fund grant (Ref NA150344) and an EPSRC Established Career Fellowship to HZ (Ref EP/S005293/2).

\end{document}